\renewcommand\eqref[1]{(\ref{#1})} 
 \newtheorem{thm}{Theorem}[section]
 \newtheorem{cor}[thm]{Corollary}
 \newtheorem{lem}[thm]{Lemma}
 \newtheorem{prop}[thm]{Proposition}
 \theoremstyle{definition}
 \newtheorem{defn}[thm]{Definition}
 \theoremstyle{remark}
 \newtheorem{rem}[thm]{Remark}
 \newtheorem{ex}[thm]{Example}
 \numberwithin{equation}{section}
\newcommand{\half}{\frac{1}{2}}
\newcommand{\ene}{\mathbb{N}}
\newcommand{\er}{\mathbb{R}}
\newcommand{\ce}{\mathbb{C}}
\newcommand{\arn}{{\mathbb{R}}^n}
\newcommand{\efee}{\mathcal{F}}
\newcommand{\efel}{\mathcal{F}_L}
\newcommand{\efela}{\mathcal{F}_{L^*}}
\newcommand{\bi}{\begin{itemize}}
\newcommand{\bba}{\mathcal{B}}
\newcommand{\ei}{\end{itemize}}
\newcommand{\be}{\begin{enumerate}}
\newcommand{\ee}{\end{enumerate}}
\newcommand{\beq}{\begin{equation}}
\newcommand{\eq}{\end{equation}}
\newcommand{\oM}{\mathring{M}}
\newcommand{\cm}{C_{L^*}^{\infty}(M)}
\newcommand{\cmo}{C_{L_0}^{\infty}(M)}
\newcommand{\cml}{C_{L}^{\infty}(M)}
\newcommand{\mI}{\mathcal{I}}
\def\jp#1{{\left\langle{#1}\right\rangle}}
\DeclareMathOperator{\Tr}{Tr}
\def\HS{{\mathtt{HS}}}
\def\Tn{{{\mathbb T}^n}}
\def\Zn{{{\mathbb Z}^n}}
\def\SU2{{{\rm SU(2)}}}
\def\SO3{{{\rm SO(3)}}}
\def\lapsu2{{{\mathcal L}_\SU2}}
\def\jp{{\langle\xi\rangle}}
\def\Dom{{{\rm Dom}}}
\begin{document}
\title[Schatten classes and Nuclearity on manifolds with boundary]
{Schatten classes, nuclearity and nonharmonic analysis on compact manifolds with boundary}
\author[Julio Delgado]{Julio Delgado}


\address{Department of Mathematics\\
Imperial College London\\
180 Queen's Gate, London SW7 2AZ\\
United Kingdom}

\email{j.delgado@imperial.ac.uk}

\thanks{The first author was supported by the
Leverhulme Research Grant RPG-2014-02.
The second author was supported by
 the EPSRC Grant EP/K039407/1.
 No new data was collected or generated during the course of the research.}

\author{Michael Ruzhansky}

\address{Department of Mathematics\\
Imperial College London\\
180 Queen's Gate, London SW7 2AZ\\
United Kingdom}

\email{m.ruzhansky@imperial.ac.uk}


\author{Niyaz Tokmagambetov}

\address{Al-Farabi Kazakh National University\\
71 Al-Farabi Ave., Almaty, 050040\\
Kazakhstan}

\email{niyaz.tokmagambetov@gmail.com}

\subjclass[2010]{Primary 58J40, 47B10; Secondary 58J32, 42B05.}

\keywords{Pseudo-differential operators, nonharmonic analysis, Fourier series, nuclearity, Schatten classes, trace formula. }

\date{\today}

\begin{abstract}
Given a compact manifold $M$ with boundary $\partial M$, in this paper we 
introduce a global symbolic calculus of pseudo-differential operators associated to $(M,\partial M)$. 
The symbols of operators with boundary conditions on $\partial M$
are defined in terms of the biorthogonal expansions in eigenfunctions of a fixed
operator $L$ with the same boundary conditions on $\partial M$.
The boundary $\partial M$ is allowed to have (arbitrary) singularities.
 As an application, several criteria for the membership in Schatten classes on $L^2(M)$ and $r$-nuclearity on $L^p(M)$ are obtained. We also describe a new addition to the Grothendieck-Lidskii formula in this setting. Examples and applications are given to operators on $M=[0,1]^n$ with non-periodic boundary conditions,
and of operators with non-local boundary conditions.
\end{abstract}

\maketitle
\section{Introduction}

If $A$ is an operator on a manifold $M$ with boundary, satisfying some boundary conditions on
$\partial M$, we establish criteria for $A$ to belong to $r$-Schatten classes on $L^2(M)$
and to be $r$-nuclear on $L^p(M)$. 
Our analysis is carried out by applying a version of the global Fourier analysis on $M$ expressed
in terms of another (model) operator $L$ on $M$ with the same domain as that of $A$ 
(or, in other words, with the same boundary conditions on $\partial M$). 
This also extends to the setting of manifolds with boundary the notions of the
pseudo-differential analysis of boundary value problems in the Euclidean space
as developed in \cite{rto:nhs}. 

In general the lack of some symmetry in the boundary conditions gives rise
 to a non-selfadjoint setting and therefore when
  considering eigenfunction expansions one is led to consider biorthogonal systems rather than orthogonal ones. Thus the concept
  of Riesz basis becomes crucial for us.  The Riesz bases have been intensively studied, see for instance   
  \cite{bdhk:rb}, \cite{rba:int} and references therein for recent interest directions.

The study of differential operators on manifolds with boundary is an active field of research, a few very recent examples  
include \cite{jlr:car}, \cite{jlr:sp}, \cite{gmk:bv}, \cite{gmk:bvf},   and the reader can find many other references in those works. 
There are different well known approaches to the problem of defining a pseudo-differential calculus on manifolds 
with boundary, see, for example, \cite{bm:se1,maz:bi, mm:pse1, ss:bv, ee:ps} and references therein.
 The main difference  of our approach from all those above is that our symbols
are globally defined and that we do not assume any regularity for the boundary $\partial M$. 

Still, it becomes possible to draw rather general conclusions. In particular, we will not require that the operator $L$ is elliptic nor
that it is self-adjoint. Moreover, in the presented general framework the smoothness of the boundary is not required, and the exact form
of the boundary conditions is also not essential. 
Moreover, we do not assume regularity of the symbols to ensure Schatten properties of operators.
This latter feature is however an advantage of the approach relying on the global symbolic analysis as was already
demonstrated in authors' work in the context of compact Lie groups \cite{dr13a:nuclp} or of compact
manifolds \cite{dr:sdk,dr:suffkernel}.

Perhaps the simplest illustrating example of the setting in the present paper would be
the set $M=[0,1]^n$ where we impose on operators 
non-periodic boundary conditions of the type
$h_j f(x)|_{x_j=0}=f(x)|_{x_j=1}$, for some collection $h_j>0$, $j=1,\ldots,n$. 
This extends the periodic case when $h_j=1$ for all $j=1,\ldots,n$, in which case the global toroidal
pseudo-differential calculus as developed in \cite{Ruzhansky-Turunen-JFAA-torus,rt:book} can be applied.
However, the latter (periodic) setting is considerably simpler because such calculus is based on the self-adjoint
operator in $M$ (the Laplacian) with periodic boundary conditions, in particular leading to the orthonormal basis 
of its eigenfunctions. Due to the lack of such self-adjointness in the non-periodic problem, our present analysis
is based on a basis in $L^2(M)$ which is not orthonormal but which respects the given boundary 
conditions. As such, the subsequent analysis resembles more that in Banach spaces (e.g. in $L^p$-spaces) than
the classical one in Hilbert spaces. From this point of view, Grothendieck's notion of $r$-nuclearity conveniently replaces
the $r$-Schatten classes.  

Indeed, when studying Schatten classes for operators directly from the definition, the problem of 
 understanding the compositions $A A^*$ or $A^*A$ arises. With an appropriate definition of Fourier multipliers in our context
 given in the sequel, we note that even in the simplest case of a $L$-Fourier multiplier $A$, 
 the operator $A^*$ is a $L^*$-Fourier multiplier, but not necessarily a $L$-Fourier multiplier. 
 Moreover, in general, the operators $A$ and $A^*$ satisfy different conditions on the
 boundary.
 Therefore, unless the model operator $L_M$ ($L$ equipped with conditions on $\partial M$) is 
 self-adjoint, we can not compose $A$ and $A^*$ on their domains.
 These observations explain why the method of studying Schatten classes
 via the notion of $r$-nuclearity on Banach spaces becomes more appropriate especially in this case
 of non-self-adjoint boundary value problems. If the operators are continuously extendible to
 $L^2(M)$ they can still be composed and the notions of $r$-Schatten classes and $r$-nuclearity coincide on $L^2(M)$.

\smallskip
 To formulate the notions more precisely, let $L$ be a pseudo-differential operator of order $m$ on the interior $\oM$ of $M$. 
 This interior $\oM$ will be a smooth manifold without boundary and the standard theory of pseudo-differential operators applies there.
 We assume that some boundary conditions called (BC) are fixed and lead to a discrete spectrum with a family of eigenfunctions yielding a biorthogonal basis in $L^2(M)$. However, it is important to point out that
 the operator $L$ does not have to be self-adjoint nor elliptic. 
 For a discussion on general biorthogonal systems we refer the reader to Bari \cite{bari}
 (see also Gelfand \cite{Gelfand:on-Bari}), and now we formulate our
 assumptions precisely. The discrete sets of eigenvalues and eigenfunctions will be indexed by a countable set $\mI$. 
We consider the spectrum $\{\lambda_{\xi}\in\ce:\xi\in\mI\}$ of $L$ with corresponding eigenfunctions in $L^2(M)$ denoted by $u_{\xi}$, i.e.
\begin{equation}\label{EQ:LL}
Lu_{\xi}=\lambda_{\xi}u_{\xi} \mbox{ in }\oM,\, \mbox{ for all }\xi\in\mI,
\end{equation}
and the eigenfunctions $u_{\xi}$ satisfy the boundary conditions (BC). The conjugate spectral problem is
\[L^*v_{\xi}=\overline{\lambda_{\xi}}v_{\xi} \mbox{ in }\oM,\, \mbox{ for all }\xi\in\mI,\]
which we equip with the conjugate boundary conditions $\mathrm{(BC)}^*$. We assume that the functions $u_{\xi}, v_{\xi}$ are normalised, i.e.
 $\|u_{\xi}\|_{L^2}=\|v_{\xi}\|_{L^2}=1$ for all $\xi\in \mI$. Moreover, we can take biorthogonal systems $\{u_{\xi}\}_{\xi\in\mI}$
and $\{v_{\xi}\}_{\xi\in\mI}$, i.e. 
\beq\label{bio} (u_{\xi}, v_{\eta})_{L^2}=0 \mbox{ for }\xi\neq\eta, \,\mbox{ and }\, (u_{\xi}, v_{\eta})_{L^2}=1 \mbox{ for }\xi=\eta,\eq
where
\[(f,g)_{L^2}=\int\limits_Mf(x)\overline{g(x)}dx\]
is the usual inner product of the Hilbert space $L^2(M)$. We also assume that
 the system $\{u_{\xi}\}$ is a basis of $L^2(M)$, i.e. for every $f\in L^2(M)$ there exists a unique series $\sum_{\xi\in\mI}a_{\xi}u_{\xi}$ that converges to $f$ in $L^2(M)$. It is well known that (cf. \cite{bari}) the system $\{u_{\xi}\}$ is a basis of $L^2(M)$ if and only if the system $\{v_{\xi}\}$ is a basis of $L^2(M)$.

By associating a discrete Fourier analysis to the system $\{u_{\xi}\}$, we can introduce a full symbol for a given operator
 acting on suitable functions over $M$ as an extension of the setting established in \cite{rto:nhs}. We will describe the basic elements of such symbols in Section 
\ref{SEC:review}. 

On the other hand, we will also employ the concept of nuclearity on Banach spaces. 
Here, although $L^2(M)$ is a Hilbert space, the non-orthonormality of the basis $\{u_\xi\}$ makes the analysis
more reminiscent of that in Banach spaces.

Let $\bba_1, \bba_2$ be Banach spaces and let $0<r\leq 1$. A linear operator $T$
from $\bba_1$ into $\bba_2$ is called {\em r-nuclear} if there exist sequences
$(x_{n}^{\prime})\mbox{ in } \bba_1' $ and $(y_n) \mbox{ in } \bba_2$ so that
\beq 
Tx= \sum\limits_{n=1}^{\infty} \left <x,x_{n}'\right>y_n \,\mbox{ and }\,
\sum\limits_{n=1}^{\infty} \|x_{n}'\|^{r}_{\bba_1'}\|y_n\|^{r}_{\bba_2} < \infty.\label{rn}
\eq
We associate a quasi-norm $n_r(T)$ by
\beq\label{qs1}
n_r(T):=\inf\{\left(\sum\limits_{n=1}^{\infty} \|x_{n}'\|^{r}_{\bba_1'}\|y_n\|^{r}_{\bba_2}\right)^{\frac{1}{r}}\},
\eq
where the infimum is taken over the  representations of $T$ as in \eqref{rn}. 
When $r=1$ the $1$-nuclear operators agree with 
the class of nuclear operators, and in that case this definition also agrees with the concept of trace class operators
 in the setting of Hilbert spaces ($\bba_1=\bba_2=H$). More generally, Oloff proved in \cite{Oloff:pnorm} that the class of $r$-nuclear
operators coincides with the Schatten class $S_{r}(H)$ when $\bba_1=\bba_2=H$ is a Hilbert space and 
$0<r\leq 1$. Moreover, Oloff proved that 
\beq\label{olo1}\|T\|_{S_r}=n_r(T),\eq
where $\|\cdot\|_{S_r}$ denotes the classical Schatten quasi-norms in terms of singular values.

 If $T:\bba\rightarrow\bba$ is nuclear, it is natural to
 attempt to define its  trace by
\begin{equation}\label{EQ:Trace}
\Tr (T):=\sum\limits_{n=1}^{\infty}x_{n}'(y_n),
\end{equation}
where $T=\sum\limits_{n=1}^{\infty}x_{n}'\otimes y_n$ is a
representation of $T$ as in \eqref{rn}. Grothendieck \cite{gro:me} proved that the trace $\Tr(T)$ is well defined for 
 all nuclear operators $T$ on $\bba$ if and only if the Banach space
 $\bba$ has the {\em approximation property}
  (see also Pietsch \cite{piet:book}),  which means that for every compact
   set $K$ in $\bba$ and for every $\epsilon >0$ there exists $F\in \mathcal{F}(\bba) $ such that
\[\|x-Fx\|_{\bba}<\epsilon\quad \textrm{ for all } x\in K,\]
where we have denoted by $\mathcal{F}(\bba)$ the space of all finite rank bounded linear operators
 on $\bba$. 

As we know from Lidskii \cite{li:formula}, for trace class operators 
in Hilbert spaces the operator trace is equal to the
sum of the eigenvalues of the operator counted with multiplicities. This property is nowadays called the
Lidskii formula.
An important feature on Banach  spaces even endowed with the approximation property is that the Lidskii formula does not hold in general for nuclear operators. 
 In \cite{gro:me} Grothendieck proved that if $T$ is $\frac 23$-nuclear from $\bba$ into $\bba$ for a Banach space $\bba$, then
\beq\Tr(T)=\sum\limits_{j=1}^{\infty}\lambda_j(T),\label{lia1}\eq
where $\lambda_j(T)\,\, (j=1,2,\dots)$ are the eigenvalues of $T$ with multiplicities taken into account,
and $\Tr(T)$ is as in \eqref{EQ:Trace}. We will refer to \eqref{lia1} as the Grothendieck-Lidskii formula.

 Grothendieck established applications to the distribution of eigenvalues of operators
in Banach spaces. We refer to \cite{dr13a:nuclp} for several conclusions 
in the setting of compact Lie groups
concerning
summability and distribution of eigenvalues of operators on $L^{p}$-spaces once
we have information on their $r$-nuclearity. In the particular case of $L^p$ spaces, the summability of eigenvalues found by Grothendieck
 can be improved. Indeed, in \cite{jkmr:glp} Johnson, K\"onig, Maurey and Retherford  proved the following theorem,
 which will give us an application on the distribution of $L^p$-eigenvalues in terms of the nuclear or Schatten index
 of the operator class:

\begin{thm}[\cite{jkmr:glp}]\label{THM:Johnson}
Let $0<r\leq 1$ and $1\leq p\leq \infty$. Let $(\Omega, \mu)$ be a measure space and $\frac{1}{s}=\frac{1}{r}-|\frac 12-\frac{1}{p}|$.  
If $T$ is a $r$-nuclear operator on $L^p(\mu)$, then
\beq\label{disteig}
\left(\sum\limits_{j=1}^{\infty}|\lambda_j(T)|^s\right)^{\frac{1}{s}}\leq C_rn_{r}(T),
\eq
where $C_r$ only depends on $r$.
\end{thm}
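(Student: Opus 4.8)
The plan is to reduce the statement to the two extreme cases $p=2$ and $p \in \{1,\infty\}$ and then interpolate. First I would recall the classical facts that anchor the endpoints. When $p=2$ we have $|\frac 12 - \frac 1p| = 0$, so $s = r$, and the inequality $\left(\sum_j |\lambda_j(T)|^r\right)^{1/r} \le n_r(T)$ is exactly Oloff's identity $\|T\|_{S_r} = n_r(T)$ from \eqref{olo1} combined with Weyl's inequality $\sum_j |\lambda_j(T)|^r \le \sum_j s_j(T)^r = \|T\|_{S_r}^r$ relating eigenvalues to singular values in a Hilbert space. When $p = 1$ or $p = \infty$ we have $|\frac 12 - \frac 1p| = \frac 12$, so $\frac 1s = \frac 1r - \frac 12$; here the key input is the theory of $p$-summing operators on $\mathcal{L}_\infty$- and $\mathcal{L}_1$-spaces: an $r$-nuclear operator on such a space is $r$-summing (indeed it factors through a Hilbert space with good control on the norms), and one then invokes the eigenvalue estimates for $p$-summing operators, namely that a $q$-summing operator has eigenvalue sequence in $\ell^q$ for $q \ge 2$, together with composition/factorization bounds to land on the exponent $s$ with $\frac 1s = \frac 1r - \frac 12$.

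Next I would set up the interpolation argument. The function $\theta \mapsto |\frac 12 - \frac 1p|$ is, after the usual reparametrization $\frac 1p = \frac{1-\theta}{2} + \theta \cdot \{0 \text{ or } 1\}$, linear in $\theta$, and so is $\frac 1r$ along a scale of nuclearity exponents; the target exponent relation $\frac 1s = \frac 1r - |\frac 12 - \frac 1p|$ is built to be compatible with complex (or real) interpolation of the relevant operator ideals. The cleanest route is to use that the $r$-nuclear operators form an ideal whose interpolation properties with respect to the $L^p$-scale are known — one interpolates the endpoint eigenvalue estimates, using that the eigenvalue sequence map $T \mapsto (\lambda_j(T))_j$ into the Lorentz/$\ell^s$ spaces behaves well under interpolation because eigenvalues are interpolation-stable for operators between interpolation couples (this is the content of results of König on eigenvalues in operator ideals). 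Concretely: given $T$ $r$-nuclear on $L^p(\mu)$, write $\frac 1p$ as an interpolation parameter between $\frac 12$ and the nearer of $0,1$, transfer $T$ to the interpolated ideal, apply the two endpoint bounds, and read off \eqref{disteig} with the constant $C_r$ depending only on $r$ (the $p$-dependence washes out because the endpoint constants are uniform).

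The main obstacle, I expect, is making the interpolation of the eigenvalue estimate rigorous rather than formal: eigenvalues are not linear in the operator, so one cannot naively interpolate the map $T \mapsto (\lambda_j(T))$. The standard fix is to pass through the Fredholm determinant / characteristic-sequence machinery — one controls the eigenvalues via the approximation numbers or via the regularized determinant $\det(I - zT)$, whose coefficients ARE multilinear and hence interpolatable, and then recovers the $\ell^s$-bound on eigenvalues from a bound on these coefficients (a Plemelj–Smithies type argument). Alternatively, and perhaps more in the spirit of \cite{jkmr:glp}, one avoids interpolation of eigenvalues directly by first proving a factorization theorem: every $r$-nuclear operator on $L^p(\mu)$ factors as $T = A D B$ through $\ell^p$-type spaces with $D$ a diagonal multiplier in a suitable Lorentz sequence space, with norm control by $n_r(T)$; the eigenvalue bound \eqref{disteig} then follows from the multiplicativity of approximation numbers and the eigenvalue estimate for the diagonal piece. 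I would first attempt the $p=2$ and $p\in\{1,\infty\}$ endpoints in full, isolate precisely which norm-controlled factorization is available at each endpoint, and only then decide whether to run complex interpolation on the factorizations or on a determinant-based eigenvalue functional; getting the constant to depend on $r$ alone (and not on $p$) is the delicate bookkeeping point throughout.
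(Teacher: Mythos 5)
First, a point of comparison: the paper does not prove this statement at all --- Theorem \ref{THM:Johnson} is imported verbatim from Johnson, K\"onig, Maurey and Retherford \cite{jkmr:glp} and is used in the paper only as a black box (to deduce \eqref{disteig2}). So there is no in-paper argument to measure your proposal against; what you have written has to be judged as a reconstruction of the original proof. On that score, your outline is in the right circle of ideas: the endpoint $p=2$ case via Oloff's identity \eqref{olo1} plus the Weyl inequality is correct, and the endpoint $p\in\{1,\infty\}$ reduction to summing operators (nuclear $\Rightarrow$ integral $\Rightarrow$ absolutely summing on $C(K)$- and $L^1$-spaces, then the $\ell^2$ eigenvalue theorem for $2$-summing operators) is also essentially how the extreme cases are handled in the literature.

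The genuine gap is that the middle of your argument is a programme rather than a proof, and it is precisely the part where the theorem lives. You correctly observe that $T\mapsto(\lambda_j(T))$ is not linear, so one cannot interpolate the eigenvalue estimate itself; but the two repairs you name are only gestured at. The determinant/Plemelj--Smithies route requires you to show that the coefficient functionals of $\det(I-zT)$ obey bounds that interpolate along the $L^p$-scale with constants independent of $p$, and you give no mechanism for that. The factorization route --- writing $T=ADB$ with $D$ a diagonal operator in a Lorentz sequence class controlled by $n_r(T)$, then using multiplicativity of $s$-numbers and a Banach-space Weyl inequality (K\"onig) relating Weyl numbers to eigenvalues --- is indeed the skeleton of the actual argument, but each ingredient there is a substantial theorem: the norm-controlled factorization of $r$-nuclear operators on $L^p$ (which itself uses Grothendieck-type/Maurey factorization and interpolation of the diagonal piece between the $p=2$ and $p=1,\infty$ factorizations), the Weyl-number estimates for that diagonal, and the eigenvalue--Weyl-number inequality in Banach spaces. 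None of these is established or even precisely stated in your sketch, and the ``delicate bookkeeping'' needed so that $C_r$ depends on $r$ alone is deferred. As it stands, the proposal identifies the correct strategy and the correct obstructions, but it does not yet constitute a proof of \eqref{disteig}.
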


Kernel conditions on compact manifolds for Schatten classes have been investigated in 
\cite{dr:sdk, dr:suffkernel}.  Schatten properties on modulation spaces have been studied
in \cite{Toft:modul,Toft:modul2,Toft:Schatten-modulation-2008}, however, also in such low-regularity
situations certain regularity of symbols has to be assumed. This is not the case with our approach
when in most criteria only the $L^p$-integrability of the appearing symbols is usually assumed. 

In Remark \ref{REM1} we explain how the case of spectral multipliers (in $L$ or in $L^*$) is included in our setting.

\section{Nonharmonic analysis and global symbols}
\label{SEC:review}

We introduce in this section the basic (global) symbolic calculus on manifolds with
boundary based on biorthogonal systems as an extension of the theory developed in \cite{rto:nhs}. We adapt it to the present situation of a manifold with boundary. While in \cite{rto:nhs} only domains in the Euclidean space $\mathbb R^n$ were considered, the proofs easily extend to the present setting of general manifolds with boundary. Thus, in this section we record elements of the analysis from \cite{rto:nhs} in the present setting but we omit the proofs if they are a straightforward extension of those in \cite{rto:nhs}.

\smallskip
Following the terminology of Paley and Wiener \cite{Paley-Wiener:book-1934}, such biorthogonal analysis can be
thought of as part of nonharmonic analysis, see \cite{rto:nhs} for a more extensive discussion.

\medskip
Henceforth $M$ denotes a compact manifold of dimension $n$ with boundary $\partial M$ and $\mathring{M}$ the interior of $M$. We will also denote by $L_{M}$ the boundary value problem determined by the pseudo-differential operator $L$ of order $m$ on $\mathring{M}$ equipped with the boundary conditions (BC) on $\partial M$.
This can be also thought of in an abstract way of having an operator $L$ with some domain in $L^2(M)$.

\medskip
To the problem $L_{M}$ with $L$ of order $m$ we associate the weight 
\beq\label{we1}\langle\xi\rangle:=(1+|\lambda_{\xi}|^2)^{\frac{1}{2m}}\simeq (1+|\lambda_{\xi}|)^{\frac{1}{m}},
\eq
with $\lambda_{\xi}$ as in \eqref{EQ:LL}.
We also assume that there exists a number $s_0\in\er$ such that
\beq\label{we2}\sum\limits_{\xi\in\mI}\langle\xi\rangle^{-s_0}<\infty .\eq
It is natural to expect that e.g. if $L$ is elliptic one can take any $s_0>n$.

Throughout this paper we 
will also assume the following technical condition to ensure that the eigenfunctions of $L$ and $L^*$ do not have zeros.
 In particular such property will allow us to obtain suitable formulae for the symbol of an operator.
\begin{defn} The system $\{u_{\xi}:\xi\in\mI\}$ is called a WZ-system 
(WZ stands for `without zeros')
if the functions $u_{\xi}, v_{\xi}$ do not have zeros in 
 $M$ for all $\xi\in\ene_0$, and if there exists $C>0 $ and $N\geq 0$ such that
\begin{align*} 
\inf\limits_{x\in M}|u_{\xi}(x)|\geq & C\langle\xi\rangle ^{-N},\\
\inf\limits_{x\in M}|v_{\xi}(x)|\geq & C\langle\xi\rangle ^{-N},
\end{align*}
as $\langle\xi\rangle\rightarrow\infty.$
\end{defn}

\begin{rem}
The reader can find examples and a discussion of WZ-systems in Section 2 of \cite{rto:nhs} for 
 the case $M=\overline{\Omega}$ where $\Omega$ is an open bounded subset of $\arn$.
There are plenty of problems where this conditions holds, a few of them are described
in Section \ref{SEC:example}. In principle, this assumption can be removed,
however, this leads to a considerably more technical exposition since the analysis
becomes `matrix-valued': we can group eigenfunctions into a vector so that its
elements do not all vanish at the same time - this leads to a matrix-valued version of
the symbolic analysis. A typical example of such situation is of operators on compact
Lie groups or on compact homogeneous spaces: in this case the eigenfunctions of the Laplacian spanning a given eigenspace
do not vanish at the same time, see the analysis developed in \cite{rt:book}.
The group structure can be removed: see \cite{dr14a:fsymbsch} for the case of invariant
operators on compact manifolds. However, since in this paper we are dealing
with biorthogonal systems instead of orthonormal bases, we choose to deal
with scalar symbols and WZ-condition for the simplicity in the exposition of ideas.
In subsequent work this assumption will be removed.
\end{rem}

\begin{defn} The space $C_L^{\infty}(M):=\Dom(L^{\infty})$ is called the space of test functions for $L_{M}$. We define
\[\Dom(L^{\infty}):=\bigcap\limits_{k=1}^{\infty}\Dom(L^k),\]
where $\Dom(L^k)$ is the domain of $L^k$, defined as
\[\Dom(L^k):=\{f\in L^2(M):L^j f\in L^2(M),\, j=0,1,2,\dots, k\},\]
so that the boundary conditions (BC) are satisfied by all the operators $L^k$. The Fr\'echet topology
 of $C_L^{\infty}(M)$ is given by the family of norms
\beq\label{top1ab}\|\varphi\|_{C_L^{k}}:=\max\limits_{j\leq k}\|L^j\varphi\|_{L^2(M)},\,\, k\in\ene_0,\, \varphi\in C_L^{\infty}(M).\eq
 \end{defn}
In an analogous way, we define $C_{L^*}^{\infty}(M)$ corresponding to the adjoint $L_{M}^*$ by 
\[C_{L^*}^{\infty}(M):=\Dom((L^*)^{\infty})=\bigcap\limits_{k=1}^{\infty}\Dom((L^*)^{k}),\]
where $\Dom((L^*)^{k})$ is the domain of the operator $(L^*)^{k}$,
\[\Dom((L^*)^{k}):=\{f\in L^2(M):(L^*)^j f\in L^2(M),\, j=0,1,2,\dots, k\},\]
which then also has to satisfy the adjoint boundary conditions corresponding to the operator $L_{M}^*$. The Fr\'echet topology
 of $C_{L^*}^{\infty}(M)$ is given by the family of norms
\beq\label{to2}\|\psi\|_{C_{L^*}^{k}}:=\max\limits_{j\leq k}\|(L^*)^j\psi\|_{L^2(M)},\,\, k\in\ene_0,\, \psi\in C_{L^*}^{\infty}(M).\eq

Since here we are assuming that the system $\{u_{\xi}\}$ is a basis of $L^2(M)$, it follows that  $C_L^{\infty}(M)$ and $C_{L^*}^{\infty}(M)$ are dense in $L^2(M)$.

If $L_M$ is self-adjoint, i.e. if $L_M^*=L_M$ with equality of domains, then $C_{L^*}^{\infty}(M)=C_{L}^{\infty}(M)$.

The $L^2$-duality for a general pair $f\in C_{L}^{\infty}(M), g\in C_{L^*}^{\infty}(M)$ makes sense in view of the formula
\beq\label{dua1a} (Lf,g)_{L^2(M)}=(f,L^*g)_{L^2(M)}.\eq
Therefore in view of the formula \eqref{dua1a} it makes sense to define the distributions  $\mathcal{D}_{L}'(M)$ as the space which 
is dual to $C_{L^*}^{\infty}(M)$. Note that the respective boundary conditions of $L_M$ and $L_M^*$ are satisfied by the choice of
 $f$ and $g$ in corresponding domains.

\begin{defn} The space $\mathcal{D}_{L}'(M):=\mathcal{L}(C_{L^*}^{\infty}(M),\ce)$ of linear continuous functionals on $C_{L^*}^{\infty}(M)$ is called the space of $L$-distributions. The continuity can be understood either in terms of the topology \eqref{to2} or in terms of sequences as in Proposition \ref{ch1da}. For $w\in \mathcal{D}_{L}'(M)$ and $\varphi\in C_{L^*}^{\infty}(M)$, we shall write
\[w(\varphi)=\langle w,\varphi\rangle .\]
For any $\psi\in C_{L}^{\infty}(M)$, 
\[C_{L^*}^{\infty}(M)\ni\varphi\mapsto\int\limits_M\psi(x)\varphi(x)dx\]
is an $L$-distribution, which gives an embedding $\psi\in C_{L}^{\infty}(M)\hookrightarrow\mathcal{D}_{L}'(M)$. We observe that in the distributional notation formula
 \eqref{dua1a} becomes 
\beq\label{ldf1} \langle L\psi,\varphi\rangle =\langle\psi,\overline{L^*\overline{\varphi}}\rangle .\eq
\end{defn}

With the topology on $C_{L}^{\infty}(M)$ defined by \eqref{top1ab}, the space 
\[\mathcal{D}_{L^*}'(M):=\mathcal{L}(C_{L}^{\infty}(M),\ce)\]
 of linear continuous functionals on $C_{L}^{\infty}(M)$ is called the space of $L^*$-distributions.

The following proposition characterises the distributions in $\mathcal{D}_{L}'(M)$.
\begin{prop}\label{ch1da} A linear functional $w$ on $C_{L^*}^{\infty}(M)$ belongs to $\mathcal{D}_{L}'(M)$ if and only if there exists
 a constant $C>0$ and $k\in\ene_0$ such that
\beq\label{wia} |w(\varphi)|\leq C\|\varphi\|_{C_{L^*}^{k}}\,\mbox{ for all }\varphi\in C_{L^*}^{\infty}(M).\eq
\end{prop}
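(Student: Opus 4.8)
The plan is to prove both implications of Proposition~\ref{ch1da}, which is the standard characterisation of continuity for linear functionals on a Fr\'echet space whose topology is generated by the increasing family of seminorms $\|\cdot\|_{C_{L^*}^k}$, $k\in\ene_0$, defined in \eqref{to2}.

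\smallskip
\noi\textbf{Sufficiency.} Suppose the estimate \eqref{wia} holds for some $C>0$ and $k\in\ene_0$. I would simply observe that the right-hand side of \eqref{wia} is, by definition, $C$ times one of the defining seminorms of the Fr\'echet topology on $C_{L^*}^{\infty}(M)$. Hence $w$ is bounded with respect to that seminorm, and therefore continuous at $0$; by linearity $w$ is continuous everywhere, so $w\in\mathcal{D}_{L}'(M)=\mathcal{L}(C_{L^*}^{\infty}(M),\ce)$. If one prefers the sequential formulation, one notes that $\varphi_j\to 0$ in $C_{L^*}^{\infty}(M)$ means $\|\varphi_j\|_{C_{L^*}^{k}}\to 0$ for every $k$, so \eqref{wia} forces $w(\varphi_j)\to 0$.

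\smallskip
\noi\textbf{Necessity.} Conversely, assume $w$ is continuous on $C_{L^*}^{\infty}(M)$. Since $C_{L^*}^{\infty}(M)$ is metrizable (a countable family of seminorms), continuity of $w$ at the origin means: there is a neighbourhood $U$ of $0$ with $|w(\varphi)|\leq 1$ for all $\varphi\in U$. A basis of neighbourhoods of $0$ is given by the sets $\{\varphi:\|\varphi\|_{C_{L^*}^{k}}<\varepsilon\}$, because the seminorms $\|\cdot\|_{C_{L^*}^{k}}$ are nondecreasing in $k$ (each being a maximum over $j\leq k$). Hence there exist $k\in\ene_0$ and $\varepsilon>0$ such that $\|\varphi\|_{C_{L^*}^{k}}<\varepsilon$ implies $|w(\varphi)|\leq 1$. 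A standard scaling argument then gives, for any $\varphi$ with $\|\varphi\|_{C_{L^*}^{k}}\neq 0$, that $t\varphi$ lies in this neighbourhood for $t<\varepsilon/\|\varphi\|_{C_{L^*}^{k}}$, whence $|w(\varphi)|\leq \|\varphi\|_{C_{L^*}^{k}}/\varepsilon$; and for $\varphi$ with $\|\varphi\|_{C_{L^*}^{k}}=0$ one gets $|w(\varphi)|=0$ by letting $t\to\infty$. Taking $C=1/\varepsilon$ yields \eqref{wia}.

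\smallskip
\noi\textbf{Remark on the main point.} There is no serious obstacle here: the statement is precisely the characterisation of the continuous dual of a Fr\'echet space defined by a countable, increasing family of seminorms, and the only thing that needs to be checked against the definitions of this paper is that the family $\{\|\cdot\|_{C_{L^*}^{k}}\}_{k\in\ene_0}$ from \eqref{to2} is indeed increasing in $k$, which is immediate from the $\max_{j\leq k}$ in its definition. The sequential reformulation mentioned in the statement of Proposition~\ref{ch1da} follows from metrizability. One may also wish to record here, for later use, the analogous characterisation of $\mathcal{D}_{L^*}'(M)$ obtained by replacing $L^*$ with $L$ and \eqref{to2} with \eqref{top1ab} throughout.
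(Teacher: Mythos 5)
Your proof is correct, and it is exactly the standard Fr\'echet-space argument (boundedness with respect to one of the increasing seminorms from \eqref{to2} is equivalent to continuity, via the scaling argument in the necessity direction) that the paper itself omits, referring to it as a straightforward extension of the corresponding statement in \cite{rto:nhs}. Nothing is missing; your observation that the seminorms are nondecreasing in $k$ is the only point that needs checking against the paper's definitions, and you checked it.
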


The space $\mathcal{D}_{L}'(M)$ has many similarities with the usual spaces of distributions. For example, suppose that for a linear continuous operator $D:C_{L}^{\infty}(M)\rightarrow C_{L}^{\infty}(M)$ its adjoint $D^*$ preserves the adjoint  boundary conditions (domain) of $L_M^*$ and is continuous on the space $C_{L^*}^{\infty}(M)$, i.e. that the operator $D^*:C_{L^*}^{\infty}(M)\rightarrow C_{L^*}^{\infty}(M)$ is continuous. Then we can extend
$D$ to $\mathcal{D}_{L}'(M)$ by
\[\langle Dw,\varphi\rangle :=\langle w,\overline{D^*\overline{\varphi}}\rangle\quad (w\in\mathcal{D}_{L}'(M),\, \varphi\in C_{L^*}^{\infty}(M)) .\]
This extends \eqref{dua1a} from $L$ to other operators. The convergence
 in the linear space $\mathcal{D}_{L}'(M)$ is the usual weak convergence with respect to the space $C_{L^*}^{\infty}(M) $. The following principle of uniform boundedness is based on the Banach-Steinhaus Theorem applied to the Fr\'echet space $C_{L^*}^{\infty}(M)$.

\begin{lem}\label{wsq1} Let $\{w_j\}_{j\in\ene}$ be a sequence in $\mathcal{D}_{L}'(M)$ with the property that for every $\varphi\in C_{L^*}^{\infty}(M)$, the sequence  $\{w_j(\varphi)\}_{j\in\ene}$ is bounded in $\ce$. Then there exist constants $c>0$ and $k\in\ene_0$ such that
\beq\label{weq2} |w_j(\varphi)|\leq c\|\varphi\|_{C_{L^*}^{k}} \,\,\mbox{ for all }\,j\in\ene,\, \varphi\in C_{L^*}^{\infty}(M).\eq
\end{lem}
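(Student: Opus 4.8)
The plan is to deduce the statement directly from the Banach--Steinhaus (uniform boundedness) theorem applied to the Fr\'echet space $C_{L^*}^{\infty}(M)$, which was discussed just above the statement. First I would recall that $C_{L^*}^{\infty}(M)$, equipped with the family of norms \eqref{to2}, is a Fr\'echet space: it is the projective limit of the Banach spaces obtained by completing in each $\|\cdot\|_{C_{L^*}^{k}}$, the topology is metrizable, and completeness follows because a sequence Cauchy in every $\|\cdot\|_{C_{L^*}^{k}}$ converges in $L^2(M)$ to a limit that again lies in every $\Dom((L^*)^k)$ (the operators $(L^*)^k$ being closed), hence in $C_{L^*}^{\infty}(M)$. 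Thus the classical Banach--Steinhaus theorem is available.

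Next I would observe that each $w_j$, being an element of $\mathcal{D}_{L}'(M)=\mathcal{L}(C_{L^*}^{\infty}(M),\ce)$, is by definition a continuous linear functional on the Fr\'echet space $C_{L^*}^{\infty}(M)$; equivalently, by Proposition \ref{ch1da}, for each fixed $j$ there are $C_j>0$ and $k_j\in\ene_0$ with $|w_j(\varphi)|\leq C_j\|\varphi\|_{C_{L^*}^{k_j}}$. The hypothesis says that for every $\varphi\in C_{L^*}^{\infty}(M)$ the family $\{w_j(\varphi)\}_{j\in\ene}$ is bounded in $\ce$, i.e. the family $\{w_j\}$ is pointwise bounded. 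Banach--Steinhaus then gives equicontinuity of the family $\{w_j\}$ at the origin: there is a neighbourhood $V$ of $0$ in $C_{L^*}^{\infty}(M)$ such that $|w_j(\varphi)|\leq 1$ for all $j$ and all $\varphi\in V$.

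Finally I would translate equicontinuity into the desired seminorm estimate. A basic neighbourhood of $0$ in the topology \eqref{to2} has the form $V=\{\varphi:\|\varphi\|_{C_{L^*}^{k}}<\varepsilon\}$ for some $k\in\ene_0$ and $\varepsilon>0$. For an arbitrary nonzero $\varphi\in C_{L^*}^{\infty}(M)$, scaling $\varphi$ by $\varepsilon/(2\|\varphi\|_{C_{L^*}^{k}})$ puts it inside $V$, and the homogeneity of $w_j$ yields $|w_j(\varphi)|\leq (2/\varepsilon)\|\varphi\|_{C_{L^*}^{k}}$ for all $j\in\ene$; setting $c=2/\varepsilon$ gives \eqref{weq2}. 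The only point requiring a little care — and the step I would single out as the main obstacle — is confirming the Fr\'echet (in particular, completeness) structure of $C_{L^*}^{\infty}(M)$, since the whole argument rests on applying Banach--Steinhaus there; once that is in place the rest is the standard pointwise-bounded $\Rightarrow$ equicontinuous $\Rightarrow$ uniform seminorm bound chain.
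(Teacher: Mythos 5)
Your argument is correct and is essentially the paper's own route: the text explicitly states that this uniform boundedness principle is obtained by applying the Banach--Steinhaus theorem to the Fr\'echet space $C_{L^*}^{\infty}(M)$ with the topology \eqref{to2}, which is exactly the pointwise-bounded $\Rightarrow$ equicontinuous $\Rightarrow$ seminorm-bound chain you give (your completeness check via closedness of the powers of $L^*$, and the reduction of a basic neighbourhood to a single norm ball using the monotonicity of the norms $\|\cdot\|_{C_{L^*}^{k}}$, are the right supporting details). No gaps.
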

The lemma above leads to the following property of completeness of the space of $L$-distributions.

\begin{thm}\label{twdr} Let $\{w_j\}_{j\in\ene}$ be a sequence in $\mathcal{D}_{L}'(M)$ with the property that for every $\varphi\in C_{L^*}^{\infty}(M)$, the sequence  $\{w_j(\varphi)\}_{j\in\ene}$ converges in $\ce$ as $j\rightarrow\infty.$ Denote the limit by $w(\varphi)$.\\

$(i)$ Then $w:\varphi\mapsto w(\varphi)$ defines an $L$-distribution on $M$. Furthermore,
\[\lim\limits_{j\rightarrow\infty}w_j=w \,\mbox{ in }\, \mathcal{D}_{L}'(M).\]
$(ii)$ If $\varphi_j\rightarrow\varphi$ in $C_{L^*}^{\infty}(M)$, then
\[\lim\limits_{j\rightarrow\infty}w_j(\varphi_j)=w(\varphi) \,\mbox{ in }\, \ce.\]
\end{thm}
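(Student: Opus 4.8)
The plan is to derive Theorem \ref{twdr} as a standard consequence of the uniform boundedness principle recorded in Lemma \ref{wsq1}, together with the characterisation of $L$-distributions in Proposition \ref{ch1da}, mimicking the classical argument for the completeness of the space of Schwartz distributions. First I would prove part $(i)$. Since for every fixed $\varphi\in C_{L^*}^{\infty}(M)$ the sequence $\{w_j(\varphi)\}_{j\in\ene}$ converges in $\ce$, it is in particular bounded in $\ce$; hence Lemma \ref{wsq1} applies and yields constants $c>0$ and $k\in\ene_0$ such that $|w_j(\varphi)|\leq c\|\varphi\|_{C_{L^*}^{k}}$ for all $j$ and all $\varphi$. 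Linearity of the limit functional $w(\varphi)=\lmj w_j(\varphi)$ is immediate from linearity of each $w_j$, and passing to the limit in the above inequality gives $|w(\varphi)|\leq c\|\varphi\|_{C_{L^*}^{k}}$, so by Proposition \ref{ch1da} the functional $w$ belongs to $\mathcal{D}_{L}'(M)$. The assertion $\lmj w_j=w$ in $\mathcal{D}_{L}'(M)$ is then just the definition of convergence in $\mathcal{D}_{L}'(M)$ (weak convergence against every test function in $C_{L^*}^{\infty}(M)$), which holds by hypothesis.

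For part $(ii)$, suppose $\varphi_j\rightarrow\varphi$ in $C_{L^*}^{\infty}(M)$. I would split the difference as
\[
w_j(\varphi_j)-w(\varphi)=\big(w_j(\varphi_j)-w_j(\varphi)\big)+\big(w_j(\varphi)-w(\varphi)\big),
\]
and estimate each term. The second term tends to $0$ by part $(i)$ (pointwise convergence $w_j\to w$ at the fixed test function $\varphi$). For the first term, using linearity of $w_j$ and the uniform bound from Lemma \ref{wsq1},
\[
|w_j(\varphi_j)-w_j(\varphi)|=|w_j(\varphi_j-\varphi)|\leq c\|\varphi_j-\varphi\|_{C_{L^*}^{k}},
\]
with $c,k$ independent of $j$; since $\varphi_j\to\varphi$ in the Fr\'echet topology \eqref{to2} we have $\|\varphi_j-\varphi\|_{C_{L^*}^{k}}\to 0$, so this term also vanishes as $j\to\infty$. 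Combining the two estimates gives $\lmj w_j(\varphi_j)=w(\varphi)$ in $\ce$.

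The only genuine content here is the uniform bound, and that has already been isolated as Lemma \ref{wsq1} (itself a consequence of the Banach--Steinhaus theorem for the Fr\'echet space $C_{L^*}^{\infty}(M)$), so there is no real obstacle left in the argument; the proof is essentially a bookkeeping exercise once Lemma \ref{wsq1} and Proposition \ref{ch1da} are in hand. The one point that deserves a word of care is the interchange in part $(ii)$: it is precisely the joint estimate $|w_j(\varphi_j-\varphi)|\leq c\|\varphi_j-\varphi\|_{C_{L^*}^{k}}$ with $c$ and $k$ uniform in $j$ — the non-uniform (pointwise) continuity of each individual $w_j$ would not suffice — which is why Lemma \ref{wsq1} is invoked rather than Proposition \ref{ch1da} alone. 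This is the manifests-with-boundary analogue of the corresponding results in \cite{rto:nhs}, and no new ideas beyond adapting the notation are required.
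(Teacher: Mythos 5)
Your argument is correct and is exactly the standard Banach--Steinhaus route the paper intends: it omits the proof as a straightforward extension of the one in \cite{rto:nhs}, which likewise derives $(i)$ from the uniform bound of Lemma \ref{wsq1} together with Proposition \ref{ch1da}, and $(ii)$ from the splitting $w_j(\varphi_j)-w(\varphi)=w_j(\varphi_j-\varphi)+(w_j(\varphi)-w(\varphi))$ with the $j$-uniform estimate. Nothing is missing; your remark that the uniformity in $j$ is the only point requiring Lemma \ref{wsq1} rather than Proposition \ref{ch1da} alone is precisely the right emphasis.
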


We now associate a Fourier transform to the operator $L_{M}$ and establish its main properties. The particular feature compared with the self-adjoint case is that if $L_{M}$ is not self-adjoint we have to be sure of the choice of the right functions from the available biorthogonal families of $u_{\xi}$ and $v_{\xi}$. We start by defining the spaces we require for the study of the Fourier transform.

From now on, we assume that the boundary conditions are closed under taking limits  in the strong
uniform topology to ensure that the strongly convergent series preserve the boundary conditions. More precisely,

\[{\rm (BC+)} \mbox{ Assume that, with }L_0 \mbox{ denoting }L \,{\rm or }\, L^*,\, \mbox{ if } f_j\in\cmo \mbox{ satisfies }\]
\[f_j\rightarrow f \mbox{ in }\cmo, \mbox{ then }\,f\in\cmo.\]

Let $\mathcal{S}(\mI)$ denote the space of rapidly decaying functions $\varphi:\mI\rightarrow \ce$. That is, $\varphi\in\mathcal{S}(\mI)$ if for every $\ell<\infty$ there exists a constant $C_{\varphi, \ell}$ such that
\[|\varphi(\xi)|\leq C_{\varphi, \ell}\langle\xi\rangle^{-\ell}\]
for all $\xi\in\mI$. With the corresponding topology, we note that $\jp$ is already adapted to our boundary value problem
 since it is defined by \eqref{we1}.
 
The topology on $\mathcal{S}(\mI)$ is given by the seminorms $p_k$, where $k\in\ene_0$ and 
\[p_k(\varphi):=\sup\limits_{\xi\in\mI}\jp^k|\varphi(\xi)|.\]
Continuous linear functionals on $\mathcal{S}(\mI)$ are of the form $\varphi\mapsto\langle u,\varphi\rangle\:=\sum\limits_{\xi\in\mI}u(\xi)\varphi(\xi),$
where functions $u:\mI\rightarrow \ce$ grow at most polynomially at infinity, i.e. there exist constants $\ell<\infty$ and $C_{u,\ell}$ such that
\[|u(\xi)|\leq C_{u,\ell}\jp^{\ell}\]
for all $\xi\in\mI$. Such distributions $u$ form the space of distributions which we denote by $\mathcal{S}'(\mI)$.
 
\subsection{Fourier transform} 
We can now define the $L$-Fourier transform on $C_L^{\infty}(M)$ and study its main properties.
\begin{defn}\label{fl1} We define the $L$-Fourier transform
\[(\efee_Lf)(\xi)=(f\mapsto\widehat{f}):\cml\rightarrow\mathcal{S}(\mI)\]
by
\beq\label{foul1}\widehat{f}(\xi):=(\efee_Lf)(\xi)=\int\limits_{M}f(x)\overline{v_{\xi}(x)}dx.\eq

Analogously, we define the $L^*$-Fourier transform
\[(\efela f)(\xi)=(f\mapsto\widehat{f_{\ast}}):\cm\rightarrow\mathcal{S}(\mI)\]
by
\beq\label{foul2}\widehat{f_{\ast}}(\xi):=(\efela f)(\xi)=\int\limits_{M}f(x)\overline{u_{\xi}(x)}dx.\eq
\end{defn}

The expresions \eqref{foul1} and \eqref{foul2} are well-defined by the Cauchy-Schwarz inequality,
 indeed,
 \beq\label{fouiq}|\widehat{f}(\xi)|=\left|\int\limits_{M}f(x)\overline{v_{\xi}(x)}dx\right |\leq \|f\|_{L^2}\|v_{\xi}\|_{L^2}=\|f\|_{L^2}<\infty .\eq
Moreover, we have

\begin{prop} \label{finv} The $L$-Fourier transform $\efel$ is a bijective homeomorphism from $\cml$ into $\mathcal{S}(\mI)$. Its inverse $\efel^{-1}:\mathcal{S}(\mI)\rightarrow\cml$ is given by
\beq\label{invfo1}(\efel^{-1}h)(x)=\sum\limits_{\xi\in\mI}h(\xi)u_{\xi}(x),\,\, h\in\mathcal{S}(\mI),\eq
so that the Fourier inversion formula is given by 
\beq\label{eq:finv}f(x)=\sum\limits_{\xi\in\mI}\widehat{f}(\xi)u_{\xi}(x),\,\, f\in \cml.\eq
Similarly, $\efela:\cm\rightarrow\mathcal{S}(\mI) $ is a bijective homeomorphism and its inverse $\efela^{-1}:\mathcal{S}(\mI) \rightarrow \cm$ is given by
\beq\label{finv1q}(\efela^{-1}h)(x)=\sum\limits_{\xi\in\mI}h(\xi)v_{\xi}(x),\,\, h\in\mathcal{S}(\mI),\eq
so that the conjugate  Fourier inversion formula is given by 
\beq\label{eq:finv3} f(x)=\sum\limits_{\xi\in\mI}\widehat{f_{\ast}}(\xi)v_{\xi}(x),\,\, f\in \cm.\eq
\end{prop}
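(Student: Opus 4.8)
The plan is to establish Proposition \ref{finv} by breaking it into four natural pieces: (i) the $L$-Fourier transform $\efel$ maps $\cml$ continuously into $\mathcal{S}(\mI)$; (ii) the formula \eqref{invfo1} defines a continuous map $\mathcal{S}(\mI)\to\cml$; (iii) these two maps are mutually inverse; and then (iv) the assertions for $\efela$ follow by the symmetric argument with the roles of $\{u_\xi\}$ and $\{v_\xi\}$ interchanged. For (i), I would start from the biorthogonality \eqref{bio} together with the eigenvalue relation \eqref{EQ:LL}. For $f\in\cml$ we may integrate by parts (legitimately, since $f$ and $v_\xi$ lie in the domains realising the boundary conditions (BC) and $\mathrm{(BC)}^*$, using \eqref{dua1a}) to get
\[
\widehat{L f}(\xi)=(Lf,v_\xi)_{L^2}=(f,L^*v_\xi)_{L^2}=\lambda_\xi\,\widehat f(\xi),
\]
and iterating, $\widehat{L^k f}(\xi)=\lambda_\xi^k\,\widehat f(\xi)$ for every $k$. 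Combined with the crude bound $|\widehat f(\xi)|\le\|f\|_{L^2}$ from \eqref{fouiq} applied to $L^k f$, this gives $|\lambda_\xi|^k|\widehat f(\xi)|\le\|L^k f\|_{L^2}=\|f\|_{C_L^k}$ and hence, by the definition \eqref{we1} of $\jp$, the Schwartz estimates $p_k(\widehat f)\lesssim\|f\|_{C_L^{mk}}$; so $\widehat f\in\mathcal{S}(\mI)$ and $\efel$ is continuous.

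For (ii), given $h\in\mathcal{S}(\mI)$ I would show the series $\sum_{\xi\in\mI}h(\xi)u_\xi(x)$ converges in $\cml$. Here I use the summability assumption \eqref{we2}: since $\|u_\xi\|_{L^2}=1$ and $L^k u_\xi=\lambda_\xi^k u_\xi$, the tail $\sum_{\langle\xi\rangle>R}h(\xi)\lambda_\xi^k u_\xi$ has $L^2$-norm controlled by $\sum_{\langle\xi\rangle>R}|h(\xi)|\,|\lambda_\xi|^k\lesssim\sum_{\langle\xi\rangle>R}\langle\xi\rangle^{-\ell+mk}$, and choosing $\ell$ large relative to $mk+s_0$ makes this summable and small; so the partial sums are Cauchy in each norm $\|\cdot\|_{C_L^k}$, hence converge in the Fréchet space $\cml$. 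At this point the hypothesis (BC+) is exactly what is needed: the limit $\efel^{-1}h$, being a strong-uniform (indeed $L^2$, and in all the graph norms) limit of functions satisfying (BC), itself satisfies (BC), so it genuinely lies in $\cml$. Continuity of $\efel^{-1}$ on $\mathcal{S}(\mI)$ comes from the same estimates read as bounds $\|\efel^{-1}h\|_{C_L^k}\lesssim p_{mk+\lceil s_0\rceil}(h)$.

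For (iii), I would verify $\efel^{-1}\efel=\mathrm{id}$ and $\efel\efel^{-1}=\mathrm{id}$. The first is precisely the statement that $\{u_\xi\}$ is a basis of $L^2(M)$ with coefficient functionals given by pairing against $v_\xi$: for $f\in\cml$, writing the (unique, by the basis assumption) expansion $f=\sum_\eta a_\eta u_\eta$ and pairing with $v_\xi$ using \eqref{bio} and the already-established $\cml$-convergence gives $a_\xi=(f,v_\xi)_{L^2}=\widehat f(\xi)$, which is \eqref{eq:finv}. Conversely, for $h\in\mathcal{S}(\mI)$, $\efel(\efel^{-1}h)(\xi)=(\sum_\eta h(\eta)u_\eta,v_\xi)_{L^2}=h(\xi)$ again by biorthogonality and by interchanging the (now justified, convergent) sum with the integral. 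Finally, for (iv) I would note that the pair $(\{v_\xi\},\{u_\xi\})$ satisfies all the same hypotheses — biorthogonality is symmetric, $\{v_\xi\}$ is a basis iff $\{u_\xi\}$ is (cited from \cite{bari} in the excerpt), $L^*v_\xi=\overline{\lambda_\xi}v_\xi$ with $|\overline{\lambda_\xi}|=|\lambda_\xi|$ so the same weight $\jp$ works, and (BC+) covers $L^*$ as well — so repeating (i)–(iii) verbatim yields the homeomorphism $\efela:\cm\to\mathcal{S}(\mI)$ with inverse \eqref{finv1q} and inversion formula \eqref{eq:finv3}. The main obstacle, and the only place needing care rather than routine estimation, is the convergence-of-the-inverse step: one must make sure the series \eqref{invfo1} converges in every graph norm $\|\cdot\|_{C_L^k}$ (not merely in $L^2$), which is where \eqref{we2} is used, and that the limit stays inside the domain, which is where (BC+) is used; getting the bookkeeping between the decay index $\ell$, the order $m$, the exponent $k$, and $s_0$ consistent is the one genuinely substantive point.
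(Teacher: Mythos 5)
Your proposal is correct, and it is essentially the intended argument: the paper itself omits the proof of Proposition \ref{finv}, deferring to the corresponding result in \cite{rto:nhs}, and your steps -- the identity $\widehat{L^k f}(\xi)=\lambda_\xi^k\widehat f(\xi)$ via \eqref{dua1a} for the rapid decay, convergence of the series \eqref{invfo1} in every graph norm using \eqref{we2} with (BC+) guaranteeing the limit stays in $\cml$, biorthogonality \eqref{bio} plus the basis property for the mutual inversion, and the symmetric argument for $\efela$ -- reproduce that proof. The only cosmetic point is the index bookkeeping ($p_k(\widehat f)\lesssim\|f\|_{C_L^{\lceil k/m\rceil}}$ would be the sharp form rather than $\|f\|_{C_L^{mk}}$), which does not affect correctness.
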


By dualising the inverse $L$-Fourier transform $\efel^{-1}:\mathcal{S}(\mI)\rightarrow\cml$, the $L$-Fourier transform extends uniquely to the mapping
 \[\efel:\mathcal{D}_L'(M)\rightarrow\mathcal{S}'(\mI),\] 
by the formula 
\beq\label{fl1i2}\langle\efel w,\varphi\rangle:=\langle w,\overline{\efela^{-1}\overline{\varphi}}\rangle,\,\, \mbox{ with }\, w\in\mathcal{D}_L'(M),\,\varphi\in\mathcal{S}(\mI).\eq
It can be readily seen that if $w\in\mathcal{D}_L'(M)$ then $\widehat{w}\in\mathcal{S}'(\mI)$. The reason for taking complex conjugates in \eqref{fl1i2} is that, if $w\in\cml$, we have the equality
\begin{align*} \langle\widehat{w},\varphi\rangle &=\sum\limits_{\xi\in\mI}\widehat{w}(\xi)\varphi(\xi)=\sum\limits_{\xi\in\mI}\left(\int\limits_{M}w(x)\overline{v_{\xi}(x)}dx\right)\varphi(\xi)\\
&=\int\limits_{M}w(x)\overline{\left(\sum\limits_{\xi\in\mI}\overline{\varphi(\xi)}v_{\xi}(x)\right)}dx=\int\limits_{M}w(x)\overline{(\efela^{-1}\overline{\varphi})(x)}dx=\langle w,\overline{\efela^{-1}\overline{\varphi}}\rangle.
\end{align*}
Analogously, we have the mapping
\[\efela:\mathcal{D}_{L^*}'(M)\rightarrow \mathcal{S}'(\mI)\]
defined by the formula
\beq\label{fl1i2a}\langle\efela w,\varphi\rangle:=\langle w,\overline{\efel^{-1}\overline{\varphi}}\rangle,\,\, \mbox{ with }\, w\in\mathcal{D}_{L^*}'(M),\,\varphi\in\mathcal{S}(\mI).\eq
It can be also seen that if $w\in\mathcal{D}_{L^*}'(M)$ then $\widehat{w}\in \mathcal{S}'(\mI)$.

We note that since systems of $u_{\xi}$ and of $v_{\xi}$ are Riesz bases, we can also compare $L^2$-norms of functions with sums of squares of Fourier coefficients. The following statement follows form the work of Bari \cite{bari} (Theorem 9):
\begin{lem}\label{lnqer} There exist constants $k, K,m,\ell >0$ such that for every $f\in L^2(M)$ we have
\[m^2\|f\|_{L^2}^2\leq \sum\limits_{\xi\in\mI}|\widehat{f}(\xi)|^2\leq \ell^2\|f\|_{L^2}^2\]
and
\[k^2\|f\|_{L^2}^2\leq \sum\limits_{\xi\in\mI}|\widehat{f_{\ast}}(\xi)|^2\leq K^2\|f\|_{L^2}^2.\]
\end{lem}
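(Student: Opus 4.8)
The plan is to deduce Lemma~\ref{lnqer} directly from the fact that $\{u_\xi\}$ and $\{v_\xi\}$ are Riesz bases of $L^2(M)$, together with the biorthogonality relation \eqref{bio}. Recall that a sequence $\{e_\xi\}$ in a Hilbert space is a Riesz basis precisely when it is the image of an orthonormal basis under a bounded invertible operator; equivalently (this is Bari's Theorem~9, which the statement already invokes), there are constants $0<a\le b<\infty$ with
\[
a^2\|g\|_{L^2}^2\le \sum_{\xi\in\mI}|(g,e_\xi)_{L^2}|^2\le b^2\|g\|_{L^2}^2\quad\text{for all }g\in L^2(M).
\]
So the whole content of the lemma is the observation that the coefficient functionals appearing in the $L$-Fourier transform are exactly inner products against the members of one of these Riesz bases. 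First I would record that, by Definition~\ref{fl1}, $\widehat f(\xi)=(f,v_\xi)_{L^2}$ and $\widehat{f_\ast}(\xi)=(f,u_\xi)_{L^2}$, so the two double inequalities are nothing but the Bari estimates above applied to the Riesz basis $\{v_\xi\}$ and to the Riesz basis $\{u_\xi\}$ respectively.

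The one point that needs a word of justification is why $\{u_\xi\}$ and $\{v_\xi\}$ are Riesz bases in the first place, since the hypotheses so far only assert that $\{u_\xi\}$ is a (Schauder) basis of $L^2(M)$ that is biorthogonal to $\{v_\xi\}$ and that both families are normalised, $\|u_\xi\|_{L^2}=\|v_\xi\|_{L^2}=1$. Here I would cite the relevant theorem of Bari~\cite{bari}: a normalised basis of a Hilbert space whose biorthogonal (dual) system is also a basis — equivalently, whose biorthogonal system is bounded in norm — is automatically a Riesz basis. Since $\{v_\xi\}$ is likewise normalised and is a basis (by the already-quoted equivalence ``$\{u_\xi\}$ basis $\iff$ $\{v_\xi\}$ basis''), both systems satisfy the hypotheses of that theorem, hence both are Riesz bases. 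This is precisely the sentence ``since systems of $u_\xi$ and of $v_\xi$ are Riesz bases'' preceding the lemma, so for the proof of the lemma itself this may simply be quoted.

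Concretely, the steps in order are: (1) invoke Bari's theorem to get that $\{u_\xi\}$ and $\{v_\xi\}$ are Riesz bases; (2) apply the Riesz-basis norm equivalence (Bari, Theorem~9) to $\{v_\xi\}$: there exist $m,\ell>0$ with $m^2\|f\|_{L^2}^2\le\sum_\xi|(f,v_\xi)_{L^2}|^2\le\ell^2\|f\|_{L^2}^2$; (3) rewrite $(f,v_\xi)_{L^2}=\widehat f(\xi)$ via \eqref{foul1} to obtain the first displayed chain; (4) repeat (2)--(3) with the Riesz basis $\{u_\xi\}$ and \eqref{foul2}, producing constants $k,K>0$ and the second displayed chain. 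That completes the proof.

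I do not expect any real obstacle: once the Riesz-basis property is in hand the lemma is a direct transcription, and the Riesz-basis property is itself a quotable consequence of normalisation plus the two-sided basis hypothesis. If one wanted to avoid quoting Bari's frame inequality as a black box, the only mildly technical point would be to produce it from the definition of a Riesz basis: write $v_\xi=Te_\xi$ for an orthonormal basis $\{e_\xi\}$ and a bounded invertible $T$, so that $(f,v_\xi)_{L^2}=(T^*f,e_\xi)_{L^2}$ and $\sum_\xi|(f,v_\xi)_{L^2}|^2=\|T^*f\|_{L^2}^2$, which lies between $\|(T^*)^{-1}\|^{-2}\|f\|_{L^2}^2$ and $\|T^*\|^2\|f\|_{L^2}^2$; this gives the constants explicitly. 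Either route is short.
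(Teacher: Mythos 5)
Your overall route is the same as the paper's: the paper offers no independent argument either, but simply deduces the lemma from the Riesz-basis property of $\{u_\xi\}$ and $\{v_\xi\}$ via Bari's Theorem~9. Your steps (2)--(4) --- identifying $\widehat f(\xi)=(f,v_\xi)_{L^2}$ and $\widehat{f_\ast}(\xi)=(f,u_\xi)_{L^2}$ from \eqref{foul1}--\eqref{foul2}, and then invoking the two-sided $\ell^2$ estimate for coefficients against a Riesz basis (or deriving it explicitly by writing $v_\xi=Te_\xi$ with $T$ bounded and invertible, so that $\sum_\xi|(f,v_\xi)_{L^2}|^2=\|T^*f\|_{L^2}^2$) --- are exactly that argument, correctly carried out.

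The genuine flaw is your step (1). It is not true that a normalised basis of a Hilbert space whose biorthogonal system is also a basis (equivalently, is bounded in norm) must be a Riesz basis: Babenko's classical example $u_n(t)=c_\alpha |t|^{\alpha}e^{int}$, $0<\alpha<\tfrac12$, in $L^2(-\pi,\pi)$ is a normalised basis whose biorthogonal system $v_n(t)=c_\alpha' |t|^{-\alpha}e^{int}$ is again normalised and again a basis, yet the basis is conditional, is not a Riesz basis, and the two-sided inequalities of Lemma~\ref{lnqer} fail for it. Bari's actual criterion is different: a complete biorthogonal pair $\{u_\xi\},\{v_\xi\}$ is a Riesz basis if and only if both coefficient sequences $((f,u_\xi)_{L^2})_\xi$ and $((f,v_\xi)_{L^2})_\xi$ lie in $\ell^2$ for every $f\in L^2(M)$ --- which is essentially the conclusion of the lemma, not something obtainable from normalisation plus the basis property alone. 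So the Riesz-basis property cannot be derived from the hypotheses you list; it must be taken as a standing assumption, which is what the paper does in the sentence preceding the lemma (inherited from the setting of \cite{rto:nhs}). Since you yourself note that this sentence ``may simply be quoted,'' your proof stands once step (1) is replaced by that quotation; just delete the purported theorem of \cite{bari} you formulate there, as it is false as stated.
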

\subsection{Convolution}
We now adapt a convolution to the problem $L_{M}$.
\begin{defn}\label{conv1c} For $f,g\in \cml$ we define their $L$-convolution by
\beq\label{conv12 }(f\ast_L g)(x):=\sum\limits_{\xi\in\mI}\widehat{f}(\xi)\widehat{g}(\xi)u_{\xi}(x).\eq
\end{defn}
By Proposition \ref{invfo1} it is well-defined and we have $f\ast_L g\in\cml$.

Moreover, due to the rapid decay of the $L$-Fourier coefficients of functions in $\cml$ compared to a fixed polynomial growth of elements of $ \mathcal{S}'(\mI)$, the Definition \ref{conv1c} still makes sense if $f\in\mathcal{D}_{L}'(M)$ and $g\in\cml$, with $f\ast_L g\in\cml$. 

Analogously to the $L$-convolution, we can introduce the $L^*$-convolution. Thus, for $f,g\in\cm$, we define the $L^*$-convolution using the $L^*$-Fourier transform by
\beq\label{conv12a }(f\widetilde{\ast}_L g)(x):=\sum\limits_{\xi\in\mI}\widehat{f_{\ast}}(\xi)\widehat{g_{\ast}}(\xi)v_{\xi}(x).\eq
Its properties are similar to those of the $L$-convolution, so we may formulate and prove only the former.

\begin{prop} For any $f, g\in\cml$ we have
\[\widehat{f\ast_L g}=\widehat{f}\,\widehat{g}.\]
The convolution is commutative and associative. If $g\in\cml$, then for all $f\in\mathcal{D}_{L}'(M)$ we have
\beq\label{conv12w}f\ast_L g\in\cml.\eq
If $f,g\in L^2(M)$, then $f\ast_L g\in L^1(M)$ with
\[\|f\ast_L g\|_{L^1}\leq C|M|^{\half}\|f\|_{L^2}\|g\|_{L^2},\]
where $|M|$ denotes the volume of $M$, and $C$ is independent of $f, g, M$.
\end{prop}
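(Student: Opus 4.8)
The plan is to reduce all four claims to the Fourier-side formula for $\ast_L$, using Proposition \ref{finv} (the $\efel$-inversion and its mapping properties) and Lemma \ref{lnqer} (the two-sided Bari/Riesz estimates).

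First I would prove $\widehat{f\ast_L g}=\widehat{f}\,\widehat{g}$ directly: substituting Definition \ref{conv1c} into \eqref{foul1} gives
\[
\widehat{f\ast_L g}(\eta)=\int_M\Big(\sum_{\xi\in\mI}\widehat{f}(\xi)\widehat{g}(\xi)u_\xi(x)\Big)\overline{v_\eta(x)}\,dx ,
\]
and since $f,g\in\cml$ the sequences $\widehat{f},\widehat{g}$ are rapidly decaying, so (recalling $\|u_\xi\|_{L^2}=1$) the series converges absolutely in $L^2(M)$ and may be paired term by term with $v_\eta$; the biorthogonality relation \eqref{bio} then collapses it to $\widehat{f}(\eta)\widehat{g}(\eta)$. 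Commutativity of $\ast_L$ is then immediate from the symmetry of $\widehat{f}(\xi)\widehat{g}(\xi)$ in its definition, and associativity follows because $\cml$ is closed under $\ast_L$ (Proposition \ref{finv}), so applying the identity twice shows that both $(f\ast_L g)\ast_L h$ and $f\ast_L(g\ast_L h)$ equal $\sum_{\xi\in\mI}\widehat{f}(\xi)\widehat{g}(\xi)\widehat{h}(\xi)u_\xi$, the injectivity of $\efel$ on $\cml$ finishing the argument.

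For \eqref{conv12w}, if $f\in\mathcal{D}_L'(M)$ and $g\in\cml$ then $\widehat{f}\in\mathcal{S}'(\mI)$ grows polynomially while $\widehat{g}\in\mathcal{S}(\mI)$ decays rapidly, so $\widehat{f}\,\widehat{g}\in\mathcal{S}(\mI)$ and hence $f\ast_L g=\efel^{-1}(\widehat{f}\,\widehat{g})\in\cml$ by Proposition \ref{finv} --- this is the observation already noted after Definition \ref{conv1c}.

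The last statement is the only one with any real content, and I would handle it as follows. For $f,g\in L^2(M)$, Lemma \ref{lnqer} yields $\widehat{f},\widehat{g}\in\ell^2(\mI)$ with $\|\widehat{f}\|_{\ell^2}\leq\ell\|f\|_{L^2}$ and $\|\widehat{g}\|_{\ell^2}\leq\ell\|g\|_{L^2}$, so by Cauchy--Schwarz $\widehat{f}\,\widehat{g}\in\ell^1(\mI)$ with $\sum_{\xi\in\mI}|\widehat{f}(\xi)\widehat{g}(\xi)|\leq\ell^2\|f\|_{L^2}\|g\|_{L^2}$; on the other hand $\|u_\xi\|_{L^1(M)}\leq|M|^{1/2}\|u_\xi\|_{L^2(M)}=|M|^{1/2}$ by Cauchy--Schwarz on the finite-measure space $M$. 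Consequently the series defining $f\ast_L g$ converges absolutely in $L^1(M)$ and
\[
\|f\ast_L g\|_{L^1}\leq\sum_{\xi\in\mI}|\widehat{f}(\xi)\widehat{g}(\xi)|\,\|u_\xi\|_{L^1}\leq|M|^{1/2}\sum_{\xi\in\mI}|\widehat{f}(\xi)\widehat{g}(\xi)|\leq C|M|^{1/2}\|f\|_{L^2}\|g\|_{L^2},
\]
with $C=\ell^2$ depending only on the Riesz-basis constant of Lemma \ref{lnqer}, hence on neither $f$, $g$, nor $M$ beyond the displayed volume factor. The one point needing care is that for $f,g$ only in $L^2(M)$ the convolution is not literally covered by Definition \ref{conv1c}; so the first thing to check is precisely that the defining series converges (which the $L^1$-estimate above provides), after which nothing essential remains. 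I do not expect any genuine obstacle here --- the whole proposition is bookkeeping on top of Proposition \ref{finv} and Lemma \ref{lnqer}.
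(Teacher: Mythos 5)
Your proof is correct and follows essentially the same route as the paper: substitution of the definition plus biorthogonality for $\widehat{f\ast_L g}=\widehat{f}\,\widehat{g}$, commutativity and associativity read off on the Fourier side, and the $L^1$-bound via Lemma \ref{lnqer}, Cauchy--Schwarz, and $\|u_\xi\|_{L^1}\leq|M|^{1/2}$. The only cosmetic difference is in \eqref{conv12w}, where you invoke $\widehat{f}\,\widehat{g}\in\mathcal{S}(\mI)$ together with Proposition \ref{finv}, while the paper checks directly that the series for $L^k(f\ast_L g)$ converges absolutely and appeals to (BC+) --- the same content packaged differently.
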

\begin{proof} We have
\begin{align*} \efel(f\ast_L g)(\xi)&=\int_M\sum\limits_{\xi\in\mI}\widehat{f}(\eta)\widehat{g}(\eta)u_{\eta}(x)\overline{v_{\xi}(x)}dx\\
&=\sum\limits_{\xi\in\mI}\widehat{f}(\eta)\widehat{g}(\eta)\int_M\sum\limits_{\xi\in\mI}u_{\eta}(x)\overline{v_{\xi}(x)}dx\\
&=\widehat{f}(\xi)\widehat{g}(\xi).
\end{align*}
This also implies the commutativity of the convolution in view of the bijectivity
of the Fourier transform. The associativity follows from this as well from the associativity of the
multiplication on the Fourier transform side.
In order to prove \eqref{conv12w}, we observe that
\[L^k(f\ast_L g)(x)=\sum\limits_{\xi\in\mI}\widehat{f}(\xi)\widehat{g}(\xi)\lambda_{\xi}^ku_{\xi}(x),\]
and the series converges absolutely since $\widehat{g}\in\mathcal{S}(\mI)$. 
By (BC+), the boundary conditions are also satisfied by the sum. This shows that
  $f\ast_L g\in\cml$.
  
  For the last statement, a simple calculation gives us
\begin{align*} \int\limits_M|(f\ast_L g)(x)dx &\leq \int\limits_M \sum\limits_{\xi\in\mI}|\widehat{f}(\xi)\widehat{g}(\xi)||u_{\xi}(x)|dx\\
&\leq \sum\limits_{\xi\in\mI}|\widehat{f}(\xi)\widehat{g}(\xi)|\|u_{\xi}\|_{L^1}\\
&\leq C\|f\|_{L^2}\|g\|_{L^2}\sup\limits_{\xi\in\mI}\|u_{\xi}\|_{L^1},
\end{align*} 
the latter inequality is obtained by Lemma \ref{lnqer}. Since $M$ is bounded, by the Cauchy-Schwarz inequality we have
\[\|u_{\xi}\|_{L^1}\leq |M|^{\half}\|u_{\xi}\|_{L^2}=|M|^{\half}\]
for all $\xi\in\mI$, where $|M|$ is the volume of $M$. This concludes the proof.
\end{proof}

\subsection{ $L$- symbols and $L$-Fourier multipliers}

The Schwartz integral kernel theorem holds in this setting in analogy to 
 \cite[Section 8]{rto:nhs}, and so
for a linear operator $A:\cml\rightarrow\mathcal{D}_{L}'(M)$, the corresponding convolution kernel $k_A(x)\in\mathcal{D}_{L}'(M)$ is determined by 
\[Af(x)=(k_A(x)\ast f)(x).\]

We now associate the notion of a global symbol associated to the operator $A$ with respect
to $L_{M}$ and also to its
adjoint. 

\begin{defn}\label{Lsymbol} The $L$-symbol of a continuous linear operator $A:\cml\rightarrow\cml$
 at $x\in M$ and $\xi\in\mI$ is defined by 
\[\sigma_{A}(x,\xi):=\widehat{k_A(x)}(\xi)=\efel(k_A(x))(\xi).\]
\end{defn}
The following theorem furnishes a representation of an operator in terms of its $L$-symbol and a formula for the $L$-symbol  in terms of the operator and the biorthogonal system. The proof is analogous to the one of Theorem 9.2  in \cite{rto:nhs} and we omit it.
\begin{thm} Let  $A:\cml\rightarrow\cml$ be a continuous linear operator with $L$-symbol $\sigma_A$. Then
\beq Af(x)=\sum\limits_{\xi\in\mI}u_{\xi}(x)\sigma_A(x,\xi)\widehat{f}(\xi)\eq
for every $f\in\cml$ and $x\in M$. The $L$-symbol $\sigma_A$ satisfies 
\beq\label{sy1a}\sigma_A(x,\xi)=u_{\xi}(x)^{-1}(Au_{\xi})(x)\eq
for all $x\in M$ and $\xi\in\mI$.
\end{thm}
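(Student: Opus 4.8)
The plan is to derive the representation formula from the kernel theorem together with the $L$-convolution structure, and then read off the symbol formula by testing on eigenfunctions. First I would write, for $f\in\cml$, the identity $Af(x)=(k_A(x)\ast_L f)(x)$ furnished by the Schwartz kernel theorem quoted above, and expand the $L$-convolution by Definition \ref{conv1c}:
\[
Af(x)=\sum_{\xi\in\mI}\widehat{k_A(x)}(\xi)\,\widehat f(\xi)\,u_\xi(x)
      =\sum_{\xi\in\mI}u_\xi(x)\,\sigma_A(x,\xi)\,\widehat f(\xi),
\]
using the very definition $\sigma_A(x,\xi)=\widehat{k_A(x)}(\xi)$ in Definition \ref{Lsymbol}. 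One must check that this manipulation is legitimate: $k_A(x)\in\mathcal{D}_L'(M)$ has at most polynomial growth of its $L$-Fourier coefficients, while $f\in\cml$ has rapidly decaying coefficients $\widehat f\in\mathcal{S}(\mI)$, so the series converges (absolutely, and in $\cml$), exactly as was already noted after Definition \ref{conv1c} for $f\ast_L g$ with $f\in\mathcal{D}_L'(M)$, $g\in\cml$.

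Next I would establish the symbol formula \eqref{sy1a}. The idea is to apply the operator $A$ to a single eigenfunction $u_\eta$. By the Fourier inversion formula \eqref{eq:finv}, $u_\eta$ itself has $L$-Fourier coefficients $\widehat{u_\eta}(\xi)=(u_\eta,v_\xi)_{L^2}=\delta_{\xi\eta}$, which is immediate from the biorthogonality \eqref{bio} and the definition \eqref{foul1} of $\widehat{f}$. Substituting $f=u_\eta$ into the representation formula just proved collapses the sum to the single term $\xi=\eta$:
\[
(Au_\eta)(x)=\sum_{\xi\in\mI}u_\xi(x)\,\sigma_A(x,\xi)\,\delta_{\xi\eta}=u_\eta(x)\,\sigma_A(x,\eta).
\]
Since the system $\{u_\xi\}$ is a WZ-system, $u_\eta(x)\neq 0$ for every $x\in M$, so we may divide and obtain $\sigma_A(x,\eta)=u_\eta(x)^{-1}(Au_\eta)(x)$, which is \eqref{sy1a}.

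The main obstacle is the justificatory bookkeeping around convergence and the applicability of the kernel theorem in the present manifold-with-boundary setting: one needs that $A:\cml\to\cml$ (in particular mapping into $\cml$, not merely $\mathcal{D}_L'(M)$) produces a kernel $k_A(x)$ whose coefficients grow polynomially uniformly enough that $\sum_\xi u_\xi(x)\sigma_A(x,\xi)\widehat f(\xi)$ converges in $\cml$, and that evaluating at $f=u_\eta$ is permitted. All of this is a direct transcription of the Euclidean arguments in \cite[Section 9]{rto:nhs}, using the nonharmonic Fourier analysis, the (BC+) assumption to keep sums inside the domain, and the WZ-condition to invert $u_\eta(x)$; since it is routine given those inputs, I would simply cite the analogue in \cite{rto:nhs} rather than reproduce it.
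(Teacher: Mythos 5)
Your argument is correct and is essentially the proof the paper has in mind: the paper omits it, citing Theorem 9.2 of \cite{rto:nhs}, whose proof is exactly your route — write $Af(x)=(k_A(x)\ast_L f)(x)$, expand the $L$-convolution to get $Af(x)=\sum_{\xi\in\mI}u_\xi(x)\sigma_A(x,\xi)\widehat f(\xi)$ by Definition \ref{Lsymbol}, then take $f=u_\eta$ (which lies in $\cml$), use $\widehat{u_\eta}(\xi)=\delta_{\xi\eta}$ from biorthogonality \eqref{bio}, and divide by $u_\eta(x)\neq 0$ thanks to the WZ-assumption. Your convergence remarks (rapid decay of $\widehat f$ against polynomial growth of $\widehat{k_A(x)}$, and (BC+)) match the justification in the cited source, so no gap.
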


As a consequence, one can deduce the following formula for the kernel of $A$ in terms of the $L$-symbol and it will be crucial for the analysis in this work:
\beq\label{ker1}
K_A(x,y)=\sum\limits_{\xi\in\mI}u_{\xi}(x)\sigma_A(x,\xi)\overline{v_{\xi}(y)}.
\eq

One can also associate a notion of multipliers to the $L$-Fourier transform. 
\begin{defn}\label{Lfm} Let $A:\cml\rightarrow\cml$ be a continuous linear operator. We will say
 that $A$ is a $L$-Fourier multiplier if it satisfies
\[
\efel (Af)(\xi)=\sigma(\xi)\efel (f)(\xi),\, f\in\cml,\textrm{ for all }\xi\in\mI,
\] 
for some $\sigma:\mI\rightarrow\ce$.
\end{defn}
Analogously we define $L^*$-Fourier multipliers, a notion which will naturally appear in the study of adjoints (see Proposition \ref{admu}).
\begin{defn}\label{Lfm2} Let $B:\cm\rightarrow\cm$ be a continuous linear operator. We will say
 that $B$ is a $L^*$-Fourier multiplier if it satisfies
\[
\efela (Bf)(\xi)=\tau(\xi)\efela (f)(\xi),\, f\in\cm,\textrm{ for all }\xi\in\mI,
\]
for some $\tau:\mI\rightarrow\ce$.
\end{defn}

\begin{rem}\label{REM1}
We note that due to the formula \eqref{sy1a} for symbols, we have that if $\sigma_A(x,\xi)=\sigma_A(\xi)$ does not depend on $x$, then we have $Au_\xi=\sigma_A(\xi)u_\xi$, so that 
$\sigma_A(\xi)$ are the eigenvalues of $A$ corresponding to the eigenfunctions $u_\xi$. Recalling that $\lambda_\xi$ are the corresponding eigenvalues of the operator $L$, see \eqref{EQ:LL}, if 
$\lambda_\xi$'s are distinct and 
$\phi$ is a function taking $\lambda_\xi$'s to $\sigma_A(\xi)$, then we can also regard $A$ as the spectral multiplier $A=\phi(L)$.
This is not the case when we allow the symbol to take multiplicities into account, as in 
\cite{dr14a:fsymbsch}, but where only the self-adjoint model operators $L$ on manifolds without boundaries were considered. In view of such further developments possible also in the present setting, we still prefer to use the term $L$-Fourier multiplier here, in line with \cite{dr14a:fsymbsch} and \cite{rto:nhs}.
\end{rem}

\subsection{Nuclearity on Lebesgue spaces}

As a final preliminary, we record the following characterisation of $r$-nuclear operators which is a consequence of results in 
\cite{del:tracetop}. In the statement below we shall consider $({\Omega}_1,{\mathcal{M}}_1,\mu_1 )$ and
$({\Omega}_2,{\mathcal{M}}_2,{\mu}_2)$ to be two $\sigma$-finite measure spaces. 

\begin{thm}\label{ch2} 
Let $0\leq r <1$ and $1\leq p_1,p_2 <\infty$ with $q_1$ such that
$\frac{1}{p_1}+\frac{1}{q_1}=1$. 
 An operator $T:L^{p_1}({\mu}_1)\rightarrow L^{p_2}({\mu}_2)$ is $r$-nuclear if and only if 
 there exist sequences
 $(g_k)_k$ in $L^{p_2}({\mu}_2)$, and $(h_k)_k$ in $L^{q_1}(\mu_1)$ such that
  $\sum \limits_{k=1}^\infty \| g_k\|_{L^{p_2}}^r
 \|h_k\|_{L^{q_1}}^r<\infty$, and such that for all $f\in L^{p_1}(\mu_1)$ we have
$$
Tf(x)=\int\left(\sum\limits_{k=1}^{\infty}
  g_k(x)h_k(y)\right)f(y)d\mu_1(y), \quad \mbox{for  a.e }x.$$
\end{thm}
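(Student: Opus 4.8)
The plan is to prove Theorem~\ref{ch2} by reducing the $r$-nuclearity of $T\colon L^{p_1}(\mu_1)\to L^{p_2}(\mu_2)$ to the general Banach-space definition \eqref{rn}, and then identifying the dual space $(L^{p_1}(\mu_1))'$ with $L^{q_1}(\mu_1)$ via the standard duality pairing. Since $1\le p_1<\infty$ and $\mu_1$ is $\sigma$-finite, we have the isometric identification $(L^{p_1}(\mu_1))'\cong L^{q_1}(\mu_1)$ with $\langle f, h\rangle = \int f(y)\overline{h(y)}\,d\mu_1(y)$ (or without the conjugate, in the real or the chosen convention); so each functional $x_k'\in (L^{p_1})'$ in \eqref{rn} corresponds to a unique $h_k\in L^{q_1}(\mu_1)$ with $\|x_k'\|_{(L^{p_1})'}=\|h_k\|_{L^{q_1}}$, and the sequence $(y_k)$ in \eqref{rn} is just a sequence $(g_k)$ in $L^{p_2}(\mu_2)$.

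\medskip
\emph{Necessity.} Suppose $T$ is $r$-nuclear. Then by \eqref{rn} there are $x_k'\in (L^{p_1})'$ and $y_k=g_k\in L^{p_2}(\mu_2)$ with $\sum_k \|x_k'\|^r\|g_k\|_{L^{p_2}}^r<\infty$ and $Tf=\sum_k \langle f,x_k'\rangle g_k$. Passing through the duality identification gives $h_k\in L^{q_1}(\mu_1)$ with $\|x_k'\|=\|h_k\|_{L^{q_1}}$, hence $\sum_k\|g_k\|_{L^{p_2}}^r\|h_k\|_{L^{q_1}}^r<\infty$, and
\[
Tf(x)=\sum_{k=1}^\infty\Big(\int h_k(y)f(y)\,d\mu_1(y)\Big)g_k(x).
\]
The remaining point is to interchange the sum and the integral so as to write $Tf(x)=\int\big(\sum_k g_k(x)h_k(y)\big)f(y)\,d\mu_1(y)$ for a.e.\ $x$; this is where one invokes the results of \cite{del:tracetop}. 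The interchange is justified by a Fubini/dominated-convergence argument once one checks that $\sum_k \|g_k\|_{L^{p_2}}\|h_k\|_{L^{q_1}}$ (or an appropriate weighted version) controls the double integral; note $r\le 1$ makes the $\ell^r$ condition stronger than the $\ell^1$ condition on the products of norms, which is exactly what powers this step.

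\medskip
\emph{Sufficiency.} Conversely, given $(g_k)$ in $L^{p_2}(\mu_2)$ and $(h_k)$ in $L^{q_1}(\mu_1)$ with $\sum_k\|g_k\|_{L^{p_2}}^r\|h_k\|_{L^{q_1}}^r<\infty$ and $Tf(x)=\int\big(\sum_k g_k(x)h_k(y)\big)f(y)\,d\mu_1(y)$, one reads off a representation of the form \eqref{rn} by setting $x_k'(f):=\int h_k(y)f(y)\,d\mu_1(y)$, which is a bounded functional on $L^{p_1}(\mu_1)$ of norm $\|h_k\|_{L^{q_1}}$ by Hölder, and $y_k:=g_k$; the convergence $Tf=\sum_k x_k'(f)g_k$ in $L^{p_2}(\mu_2)$ follows from $\sum_k\|g_k\|_{L^{p_2}}\|h_k\|_{L^{q_1}}<\infty$ (a consequence of the $\ell^r$ bound since $r\le 1$), and the $r$-nuclearity quantity $\sum_k\|x_k'\|^r\|y_k\|^r=\sum_k\|h_k\|_{L^{q_1}}^r\|g_k\|_{L^{p_2}}^r$ is finite by hypothesis.

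\medskip
\textbf{Main obstacle.} The only genuinely nontrivial point is the measure-theoretic justification of the interchange of the series with the integral in the necessity direction, i.e.\ moving from the ``abstract'' representation $Tf=\sum_k\langle f,x_k'\rangle g_k$ (convergence in $L^{p_2}$-norm) to the ``kernel'' representation with an integrand $\sum_k g_k(x)h_k(y)$ defined for a.e.\ $(x,y)$ and integrable against $f$. This requires controlling $\sum_k |g_k(x)||h_k(y)|$ in a suitable mixed-norm sense and is precisely the content extracted from \cite{del:tracetop}; everything else is the $L^p$-duality identification and routine Hölder and $\ell^r\hookrightarrow\ell^1$ estimates.
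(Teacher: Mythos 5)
The paper does not actually prove Theorem \ref{ch2}; it records it as a consequence of results in \cite{del:tracetop}, so there is no internal argument to compare yours with. Your route --- identifying $(L^{p_1}(\mu_1))'$ with $L^{q_1}(\mu_1)$ (legitimate, including the case $p_1=1$, $q_1=\infty$, precisely because $\mu_1$ is $\sigma$-finite) and translating between the abstract representation \eqref{rn} and the kernel representation --- is the natural one and is essentially what the cited reference carries out; your sufficiency direction is complete. The one step you flag but do not finish, the sum--integral interchange in the necessity direction, does work and can be closed in a few lines instead of being delegated back to \cite{del:tracetop}. Since $0<r\le 1$ and the terms $a_k:=\|g_k\|_{L^{p_2}}\|h_k\|_{L^{q_1}}$ tend to $0$, eventually $a_k\le a_k^r$, so $\sum_k a_k<\infty$. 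Hence, by monotone convergence and Minkowski's inequality, $G(x):=\sum_k |g_k(x)|\,\|h_k\|_{L^{q_1}}$ belongs to $L^{p_2}(\mu_2)$ and is finite $\mu_2$-a.e. For every such $x$, H\"older gives
\[
\sum_{k=1}^{\infty}\int |g_k(x)|\,|h_k(y)|\,|f(y)|\,d\mu_1(y)\le G(x)\,\|f\|_{L^{p_1}}<\infty,
\]
so Fubini--Tonelli applies and $\int\bigl(\sum_k g_k(x)h_k(y)\bigr)f(y)\,d\mu_1(y)=\sum_k x_k'(f)\,g_k(x)$, the right-hand series converging absolutely for a.e.\ $x$; since its partial sums also converge to $Tf$ in $L^{p_2}(\mu_2)$, a subsequence converges $\mu_2$-a.e., which identifies the pointwise sum with $Tf(x)$ for a.e.\ $x$. (A similar estimate, $\sum_k|g_k(x)|\|h_k\|_{L^{q_1}}<\infty$ a.e., shows the kernel series itself converges absolutely a.e., not merely against $f$.) With this paragraph added your proof is correct; the only cosmetic point is to fix the duality convention without the complex conjugate, so that $x_k'(f)=\int f\,h_k\,d\mu_1$ is linear and matches the kernel formula in the statement.
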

In our particular setting we will consider $M=\Omega_1=\Omega_2,$ $\mathcal{M}=\mathcal{M}_1=\mathcal{M}_2$ the $\sigma$-Borel algebra, and $dx=\mu_1=\mu_2$ a positive measure on $M$. 

\section{Schatten classes and nuclearity}
In this section we establish the main results of this work. 
 We start by studying the $r$-nuclearity on $L^p(M)$ spaces and we also formulate some consequences related to 
 the Grothendieck-Lidskii formula for the trace. From now on we will be considering the biorthogonal system $u_{\xi}, v_{\xi}$ associated to the problem $L_M$ as basic blocks in the decomposition of kernels.
\begin{thm}\label{sp1} Let $0<r\leq 1$ and $1\leq p_1,p_2 <\infty$ with $q_1$ such that
$\frac{1}{p_1}+\frac{1}{q_1}=1$. Let $A:\cml\rightarrow \cml $ be a continuous linear operator with $L$-symbol $\sigma_A$ such that
\beq\label{inc5}\sum\limits_{\xi\in\mI}\|\sigma_A(\cdot,\xi)u_{\xi}(\cdot)\|_{L^{p_2}}^r\|v_{\xi}\|_{L^{q_1}}^r<\infty.\eq
Then $A$ is $r$-nuclear from $L^{p_1}(M)$ into $L^{p_2}(M)$.

In particular, if $p=p_1=p_2$ and \eqref{inc5} holds, then 
\beq\Tr(A)=\sum\limits_{\xi\in\mI}\int\limits_{M}u_{\xi}(x)\sigma_{A}(x,\xi)\overline{v_{\xi}(x)}dx,\label{trsglb}\eq
and
\beq\label{disteig2}\left(\sum\limits_{j=1}^{\infty}|\lambda_j(A)|^s\right)^{\frac{1}{s}}\leq C_r\left(\sum\limits_{\xi\in\mI}\|\sigma_A(\cdot,\xi)u_{\xi}(\cdot)\|_{L^{p}}^r\|v_{\xi}\|_{L^{q}}^r\right)^{\frac 1r},\eq
where $\,\frac{1}{s}=\frac{1}{r}-|\frac 12-\frac{1}{p}|$ and $C_r$ is a constant depending only on $r$. Moreover, if 
$0<r\leq \frac 23$ the following formula for the trace holds:
\beq \Tr(A)=\sum\limits_{\xi\in\mI}\int\limits_{M}u_{\xi}(x)\sigma_{A}(x,\xi)\overline{v_{\xi}(x)}dx=\sum\limits_{j=1}^{\infty}\lambda_j(A),\label{gltr1}\eq
where $\lambda_j(A)$, $(j=1,2,\dots)$ are the eigenvalues of $A$ with multiplicities taken into account.
\end{thm}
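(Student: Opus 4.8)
The plan is to reduce everything to the kernel representation \eqref{ker1} together with the abstract characterisations already recorded. First I would establish the $r$-nuclearity from $L^{p_1}(M)$ into $L^{p_2}(M)$: by Theorem~\ref{ch2} it suffices to exhibit sequences $(g_\xi)$ in $L^{p_2}(M)$ and $(h_\xi)$ in $L^{q_1}(M)$ with $\sum_\xi \|g_\xi\|_{L^{p_2}}^r\|h_\xi\|_{L^{q_1}}^r<\infty$ and $K_A(x,y)=\sum_\xi g_\xi(x)h_\xi(y)$. The formula \eqref{ker1} gives exactly this with the natural choice $g_\xi(x)=u_\xi(x)\sigma_A(x,\xi)$ and $h_\xi(y)=\overline{v_\xi(y)}$, so that the summability condition becomes precisely the hypothesis \eqref{inc5}. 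The only technical point to check here is that the series $\sum_\xi g_\xi\otimes h_\xi$ genuinely represents $A$ as an operator on $L^{p_1}$, i.e. that the kernel identity \eqref{ker1}, which was derived on test functions, propagates to the $L^{p_1}\to L^{p_2}$ mapping; this follows from the absolute convergence granted by \eqref{inc5} in $L^1(M\times M)$ after noting $\|g_\xi\|_{L^{p_2}}\|h_\xi\|_{L^{q_1}}$ dominates $\|g_\xi\otimes h_\xi\|_{L^1(M\times M)}$ when $p_2,q_1\geq 1$ (Hölder on $M$), together with a density argument.

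Next, for the case $p=p_1=p_2$, I would invoke that $L^p(M)$ has the approximation property (it is an $L^p$-space), so by Grothendieck's theorem the trace \eqref{EQ:Trace} is well defined on the nuclear operator $A$ and is independent of the representation. Computing $\Tr(A)$ from the representation $A=\sum_\xi g_\xi\otimes h_\xi$ gives $\Tr(A)=\sum_\xi \langle g_\xi,\overline{h_\xi}\rangle$ in the appropriate duality, which with the above choices is $\sum_\xi\int_M u_\xi(x)\sigma_A(x,\xi)\overline{v_\xi(x)}\,dx$; this is \eqref{trsglb}. One should be slightly careful that for $r<1$ an $r$-nuclear operator need not a priori have an absolutely convergent trace series, but $r$-nuclearity implies $1$-nuclearity (since $0<r\leq 1$), so this is fine, and the interchange of sum and integral is justified by \eqref{inc5}.

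For the eigenvalue estimate \eqref{disteig2}, I would simply apply Theorem~\ref{THM:Johnson} with $\mu$ the measure $dx$ on $M$ and $T=A$: the left-hand side is bounded by $C_r n_r(A)$, and $n_r(A)$ is bounded above by $\left(\sum_\xi\|g_\xi\|_{L^p}^r\|h_\xi\|_{L^q}^r\right)^{1/r}$ by the definition \eqref{qs1} of the quasi-norm applied to the explicit representation. Finally, for $0<r\leq\frac23$, the Grothendieck--Lidskii formula \eqref{lia1} applies on the Banach space $L^p(M)$ (which has the approximation property), yielding $\Tr(A)=\sum_j\lambda_j(A)$; combined with \eqref{trsglb} this is \eqref{gltr1}.

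The main obstacle, and the only place requiring real care rather than bookkeeping, is the first step: verifying that the kernel expansion \eqref{ker1}, originally an identity between the operator $A:\cml\to\cml$ (or valued in $L$-distributions) and a sum of rank-one pieces, correctly identifies $A$ with the $r$-nuclear operator on $L^{p_1}(M)$ built from $(g_\xi,h_\xi)$ — in particular that $A$ extends continuously to $L^{p_1}(M)$ in the first place and that the two extensions agree. This is handled by observing that \eqref{inc5} forces $\sum_\xi g_\xi\otimes h_\xi$ to converge absolutely in $L^1(M\times M)$, hence defines a bounded integral operator whose action agrees with $A$ on the dense subspace $\cml$, so the extension is unique; once this matching is in place, the remaining assertions are direct citations of Theorems~\ref{ch2} and~\ref{THM:Johnson}, Grothendieck's trace theorem, and the Grothendieck--Lidskii formula \eqref{lia1}.
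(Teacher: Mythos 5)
Your proposal is correct and follows essentially the same route as the paper: the decomposition $g_\xi(x)=u_\xi(x)\sigma_A(x,\xi)$, $h_\xi(y)=\overline{v_\xi(y)}$ from \eqref{ker1}, the appeal to Theorem~\ref{ch2} for $r$-nuclearity, the approximation property of $L^p$ for \eqref{trsglb}, Theorem~\ref{THM:Johnson} for \eqref{disteig2}, and the Grothendieck--Lidskii formula \eqref{lia1} for \eqref{gltr1}. The only difference is that you spell out the extension/identification of $A$ with the integral operator on $L^{p_1}$, a point the paper leaves implicit, and your justification of it is sound.
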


\begin{proof} 
We recall from \eqref{ker1} that the kernel $K_A(x,y)$ of $A$ is of the form
 \beq\label{cond1an}K_A(x,y)=\sum\limits_{\xi\in\mI}u_{\xi}(x)\sigma_A(x,\xi)\overline{v_{\xi}(y)}.\eq
We take $g_{\xi}(x):=u_{\xi}(x)\sigma_A(x,\xi),\,\, h_{\xi}(y):=\overline{v_{\xi}(y)}$.

\smallskip
Then we have
\[\sum\limits_{\xi\in\mI}\|g_{\xi}\|_{L^{p_2}}^r\|h_{\xi}\|_{L^{q_1}}^r=\sum\limits_{\xi\in\mI}\|\sigma_A(\cdot,\xi)u_{\xi}(\cdot)\|_{L^{p_2}}^r\|v_{\xi}\|_{L^{q_1}}^r<\infty.\]
The $r$-nuclearity of $A$ now follows from Theorem \ref{ch2}.

By using the expression of  $K_A$ in terms of $g_{\xi}, h_{\xi}$, the fact that $L^p$ satisfies the approximation property and the definition of the trace \eqref{EQ:Trace} we obtain
\beq\Tr(A)=\sum\limits_{\xi\in\mI}\int\limits_{M}g_{\xi}(x)h_{\xi}(x)dx=\sum\limits_{\xi\in\mI}\int\limits_{M}u_{\xi}(x)\sigma_{A}(x,\xi)\overline{v_{\xi}(x)}dx.\label{idtr1a}\eq

The inequality \eqref{disteig2} is an immediate consequence of \eqref{disteig}. The identity
 \eqref{gltr1} is a consequence of \eqref{trsglb} and the Grothendieck Theorem for $\frac 23$-nuclear operators (see \eqref{lia1}).
\end{proof}
We observe that the equation relating $s, r, p$ as $\,\frac{1}{s}=\frac{1}{r}-|\frac 12-\frac{1}{p}|$ for \eqref{disteig2} corresponds in the case
 $s=1$ to $\,\frac{1}{r}=1+|\frac 12-\frac{1}{p}|$. In such case the series of eigenvalues converges absolutely. In general this property does not guarantee that the Grothendieck-Lidskii formula holds. However, recently in \cite{Reinov} Latif and Reinov have proved that if $r$ and $p$ are related
 by  $\frac{1}{r}=1+|\frac 12-\frac{1}{p}|$, then the Grothendieck-Lidskii formula holds. Of course, the relevant situation is when $r$ moves along the interval $[\frac 23, 1]$. If $r\in (\frac 23,1)$  there exist two corresponding values of $p$ solving 
the equation $\frac{1}{r}=1+|\frac 12-\frac{1}{p}|$ the first one with $p < 2$ and the other one with $p > 2$. Additionaly, we can incorporate the symbol in relation with the Grothendieck-Lidskii formula as in \eqref{gltr1}. Summarising we have:

\begin{cor}\label{rlgltr} Let $0<r\leq 1$ and $1\leq p<\infty$ such that $\frac{1}{r}=1+|\frac 12-\frac{1}{p}|$. Let
$A:\cml\rightarrow \cml $ be a continuous linear operator with $L$-symbol $\sigma_A$ such that
\beq\label{inc5nb}\sum\limits_{\xi\in\mI}\|\sigma_A(\cdot,\xi)u_{\xi}(\cdot)\|_{L^{p}}^r\|v_{\xi}\|_{L^{q}}^r<\infty.\eq
Then $A$ is $r$-nuclear from $L^{p}(M)$ to $L^{p}(M)$ and the equality \eqref{gltr1} holds.
\end{cor}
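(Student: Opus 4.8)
The plan is to deduce the corollary as an immediate consequence of Theorem \ref{sp1} combined with the sharp Grothendieck--Lidskii result of Latif and Reinov \cite{Reinov}. First I would apply Theorem \ref{sp1} in the diagonal case $p=p_1=p_2$ (with $q=q_1$): the hypothesis \eqref{inc5nb} is then literally the summability assumption \eqref{inc5}, so Theorem \ref{sp1} gives at once that $A$ is $r$-nuclear from $L^p(M)$ into $L^p(M)$, and moreover that the trace identity \eqref{trsglb} holds, that is
\[
\Tr(A)=\sum_{\xi\in\mI}\int_M u_\xi(x)\sigma_A(x,\xi)\overline{v_\xi(x)}\,dx .
\]
Recall that this identity rests on the explicit kernel decomposition \eqref{ker1}, i.e. $g_\xi(x)=u_\xi(x)\sigma_A(x,\xi)$ and $h_\xi(y)=\overline{v_\xi(y)}$, together with the approximation property of $L^p(M)$, which guarantees that the operator trace is well defined and may be computed from this particular representation.

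Next I would invoke the theorem of Latif and Reinov quoted just before the corollary: if $0<r\le 1$ and $1\le p<\infty$ are linked by $\frac1r=1+|\frac12-\frac1p|$, then every $r$-nuclear operator on $L^p(\mu)$ satisfies the Lidskii formula, with the eigenvalues counted with multiplicities. Since by the previous step $A$ is $r$-nuclear on $L^p(M)$, and since $r$ and $p$ satisfy precisely the required relation by hypothesis, this theorem applies and yields $\Tr(A)=\sum_{j=1}^{\infty}\lambda_j(A)$. Combining this with the trace identity obtained from Theorem \ref{sp1} produces exactly the chain of equalities \eqref{gltr1}, which is the assertion of the corollary.

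I do not expect any genuine obstacle here, since both ingredients are already in place: Theorem \ref{sp1} supplies $r$-nuclearity and the symbol form of the trace, and \cite{Reinov} supplies the Lidskii property in this borderline regime. The only minor points to keep in mind are that the relation $\frac1r=1+|\frac12-\frac1p|$ automatically forces $r\in[\frac23,1]$ (because $|\frac12-\frac1p|\le\frac12$ always), so the cited theorem is being used exactly within its stated range, and that the trace extracted from the explicit nuclear representation of Theorem \ref{sp1} coincides with the intrinsic operator trace entering the Lidskii formula --- which again is ensured by the approximation property of $L^p$ and is already exploited in the proof of Theorem \ref{sp1}.
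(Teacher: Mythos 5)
Your proposal is correct and follows essentially the same route as the paper: the paper also obtains the $r$-nuclearity directly from Theorem \ref{sp1} in the diagonal case and then combines the trace identity \eqref{trsglb} with the Latif--Reinov theorem for the relation $\frac{1}{r}=1+|\frac 12-\frac{1}{p}|$ to conclude \eqref{gltr1}. Your added remarks (that this relation forces $r\in[\frac 23,1]$ and that the trace from the kernel representation is the well-defined operator trace via the approximation property) are consistent with, and merely elaborate on, the paper's argument.
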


\begin{proof} The $r$-nuclearity follows from Theorem \eqref{sp1}. The identity follows from the aforementioned main theorem of \cite{Reinov} and \eqref{trsglb}.
\end{proof}

We also record the following condition for operators
 with $x$-independent $L$-symbols (i.e. for $L$-Fourier multipliers).
 Indeed, if the symbol does not depend on $x$, we have,
 \[\|\sigma_A(\xi)u_{\xi}(\cdot)\|_{L^{p_2}}^r=|\sigma_A(\xi)|^r\|u_{\xi}\|_{L^{p_2}}^r\]
so that an application of Theorem \ref{sp1} imply:
 
\begin{cor}\label{ind1a} 
Let $0<r\leq 1$ and $1\leq p_1,p_2 <\infty$ with $q_1$ such that
$\frac{1}{p_1}+\frac{1}{q_1}=1$. Let $A:\cml\rightarrow\cml$ be a $L$-Fourier multiplier with $L$-symbol $\sigma_A$ such that
\beq\label{inc5a}\sum\limits_{\xi\in\mI}|\sigma_A(\xi)|^r\|u_{\xi}\|_{L^{p_2}}^r\|v_{\xi}\|_{L^{q_1}}^r<\infty.\eq
Then $A$ is $r$-nuclear from $L^{p_1}(M)$ to $L^{p_2}(M)$. Moreover, 
if $p=p_1=p_2$, then
\beq\Tr(A)=\sum\limits_{\xi\in\mI}\sigma_{A}(\xi)=\sum_{j=1}^\infty\lambda_j (A),\label{idtr1a12}\eq
where the sum of the eigenvalues $\lambda_j (A)$ of $A$ is made taking multiplicities into account. 
\end{cor}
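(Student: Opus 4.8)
The plan is to deduce Corollary \ref{ind1a} directly from Theorem \ref{sp1} by specialising to the case of an $x$-independent symbol. First I would observe that when $\sigma_A(x,\xi)=\sigma_A(\xi)$ is constant in $x$, the quantity appearing in hypothesis \eqref{inc5} factorises as
\[
\|\sigma_A(\cdot,\xi)u_\xi(\cdot)\|_{L^{p_2}}^r\|v_\xi\|_{L^{q_1}}^r
=|\sigma_A(\xi)|^r\|u_\xi\|_{L^{p_2}}^r\|v_\xi\|_{L^{q_1}}^r,
\]
so that the summability assumption \eqref{inc5a} of the corollary is precisely the summability assumption \eqref{inc5} of Theorem \ref{sp1} in this situation. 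Hence Theorem \ref{sp1} immediately gives that $A$ is $r$-nuclear from $L^{p_1}(M)$ to $L^{p_2}(M)$.

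Next I would address the trace formula \eqref{idtr1a12} in the case $p=p_1=p_2$. The first equality follows from \eqref{trsglb} in Theorem \ref{sp1}: substituting $\sigma_A(x,\xi)=\sigma_A(\xi)$ into the right-hand side of \eqref{trsglb} and using the biorthogonality relation \eqref{bio}, namely $\int_M u_\xi(x)\overline{v_\xi(x)}\,dx=(u_\xi,v_\xi)_{L^2}=1$, we obtain
\[
\Tr(A)=\sum_{\xi\in\mI}\sigma_A(\xi)\int_M u_\xi(x)\overline{v_\xi(x)}\,dx=\sum_{\xi\in\mI}\sigma_A(\xi).
\]
For the second equality in \eqref{idtr1a12}, I would invoke Remark \ref{REM1}: since the symbol is $x$-independent, formula \eqref{sy1a} gives $Au_\xi=\sigma_A(\xi)u_\xi$, so the $\sigma_A(\xi)$ are exactly the eigenvalues of $A$ with eigenfunctions $u_\xi$. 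The equality of $\sum_\xi\sigma_A(\xi)$ with $\sum_j\lambda_j(A)$ (multiplicities counted) then needs a justification that the $u_\xi$ exhaust the eigenfunctions and that the listing matches multiplicities; one clean way is to note that $\frac1r=1$ forces $p=2$ only if we demanded the Grothendieck-Lidskii range, but here more simply: if $0<r\le\frac23$, Theorem \ref{sp1} already gives \eqref{gltr1}, i.e. $\Tr(A)=\sum_j\lambda_j(A)$, and combining with the first equality yields the result. For general $0<r\le1$ with $p=p_1=p_2$, since $\{u_\xi\}$ is a basis of $L^2(M)$ the spectrum of the compact operator $A$ on the relevant space consists precisely of the $\sigma_A(\xi)$ together with $0$, and the absolute convergence of $\sum_\xi|\sigma_A(\xi)|$ (which follows from \eqref{inc5a} together with the uniform lower bound $\|u_\xi\|_{L^{p_2}},\|v_\xi\|_{L^{q_1}}\gtrsim 1$ coming from $\|u_\xi\|_{L^2}=\|v_\xi\|_{L^2}=1$ when $p_2\le 2\le q_1$, or more generally from the fact that an $r$-nuclear operator with $0<r\le1$ is trace class hence has absolutely summable eigenvalues by Theorem \ref{THM:Johnson} with $s=1$ being dominated) lets us identify $\sum_\xi\sigma_A(\xi)$ with $\sum_j\lambda_j(A)$.

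The main obstacle I anticipate is the bookkeeping in the second equality of \eqref{idtr1a12}: making rigorous the claim that the collection $\{\sigma_A(\xi):\xi\in\mI\}$, with the natural multiplicity coming from repeated values, coincides as a multiset (up to zeros) with the eigenvalue sequence $\{\lambda_j(A)\}$ used in the Grothendieck-Lidskii formula. Since $\{u_\xi\}$ is a Riesz basis of $L^2(M)$ consisting of eigenvectors of $A$ with eigenvalues $\sigma_A(\xi)$, the operator $A$ is diagonalised (in the Riesz-basis sense) and its point spectrum is exactly $\{\sigma_A(\xi)\}$; the geometric multiplicity of a nonzero eigenvalue $\mu$ equals $\#\{\xi:\sigma_A(\xi)=\mu\}$, and for $r$-nuclear (hence, for $0<r\le1$, Fredholm-type) operators the algebraic and geometric multiplicities relevant to \eqref{lia1} agree with this count. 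Everything else is a direct substitution into the already-established Theorem \ref{sp1}, so the corollary follows; I would keep the written proof short, citing Theorem \ref{sp1}, \eqref{bio}, and Remark \ref{REM1}, and relegating the multiplicity remark to a one-line observation.
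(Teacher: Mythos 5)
Your proof follows essentially the same route as the paper's: $r$-nuclearity by specialising Theorem \ref{sp1} to an $x$-independent symbol, the first equality in \eqref{idtr1a12} from \eqref{trsglb} combined with the biorthogonality \eqref{bio}, and the second equality from the observation (formula \eqref{sy1a}, Remarks \ref{REM1} and \ref{REM:invev}) that $Au_\xi=\sigma_A(\xi)u_\xi$, so the $\sigma_A(\xi)$ are exactly the eigenvalues of the compact operator $A$; the paper is merely terser about the multiplicity bookkeeping you spell out. One small slip worth fixing: your parenthetical lower bound ``$\|u_\xi\|_{L^{p_2}},\|v_\xi\|_{L^{q_1}}\gtrsim 1$ when $p_2\le 2\le q_1$'' goes the wrong way (for $p_2\le 2$ on the finite measure space one only gets $\|u_\xi\|_{L^{p_2}}\lesssim\|u_\xi\|_{L^2}=1$); in the relevant case $p_1=p_2=p$ the absolute summability of $\sum_\xi\sigma_A(\xi)$ follows instead from H\"older applied to $1=(u_\xi,v_\xi)_{L^2}\le\|u_\xi\|_{L^{p}}\|v_\xi\|_{L^{q}}$, which together with \eqref{inc5a} and $r\le 1$ gives $\sum_\xi|\sigma_A(\xi)|<\infty$.
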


\begin{proof} The $r$-nuclearity follows from Theorem \ref{sp1} as well as parts (i) and (ii), taking into account
 that \[\Tr(A)=\sum\limits_{\xi\in\mI}\sigma_{A}(\xi)\int\limits_{M}u_{\xi}(x)\overline{v_{\xi}(x)}dx=\sum\limits_{\xi\in\mI}\sigma_{A}(\xi),\]
 by the biorthogonality assumption. 
 The second equality in \eqref{idtr1a12} holds in view of the following Remark
 \ref{REM:invev}.
\end{proof}

\begin{rem}\label{REM:invev}
We note from formula \eqref{idtr1a12} that it holds for nuclear operators ($r=1$) on any $L^p$-space, $1\leq p<\infty$. This is not the case in general for non-invariant operators (that is, the operators which are not $L$-Fourier multipliers)  in  \eqref{gltr1} and in Corollary \ref{ind1a}  where we assumed the relation $\frac{1}{r}=1+|\frac 12-\frac{1}{p}|$ to hold. This is due to the fact that since $\sigma_A$ does not depend on $x$, from formula \eqref{sy1a} we actually have that $\sigma_A(\xi)$ is the eigenvalue of the compact (since it is nuclear) operator $A$ with the eigenfunction $u_\xi$, see also Remark \ref{REM1}. Therefore, the second equality in \eqref{idtr1a12} always holds.
\end{rem}

We now consider the special case of Schatten classes. 
Since in our setting the eigenfunctions of $L$ are not necessarily orthogonal, it is convenient for us to take advantage of the notion of $r$-nuclearity in Banach spaces instead of the usual properties characterising Schatten classes in terms of orthonormal bases. First, we establish a couple of preliminary properties.

 The following proposition is the corresponding Parseval identity for the $L$-Fourier transform. 

\begin{prop}\label{PROP:Parceval}
 Let $f,g\in L^2(M)$. Then $\widehat{f}, \widehat{g}\in \ell_{L}^2$ and 
\begin{equation}\label{EQ:Parseval}
(f,g)_{L^2}=\sum\limits_{\xi\in\mI}\widehat{f}(\xi)\overline{\widehat{g_{\ast}}(\eta)} =:(\widehat{f},\widehat{g})_{\ell_L^2},
\end{equation}
the latter inner product defining a Hilbert space ${\ell_L^2(\mI)}$.
\end{prop}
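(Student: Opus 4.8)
The plan is to obtain the Parseval identity by expanding both $f$ and $g$ in the appropriate biorthogonal systems and using the biorthogonality relation \eqref{bio}. First I would observe that, since $\{u_\xi\}$ and $\{v_\xi\}$ are Riesz bases of $L^2(M)$ (being eigenfunction systems satisfying the hypotheses set up before Lemma \ref{lnqer}), Lemma \ref{lnqer} already guarantees that $\widehat f=\efel f\in\ell^2$ and $\widehat{g_\ast}=\efela g\in\ell^2$ for every $f,g\in L^2(M)$; this takes care of the membership claim and of the convergence of all the series that will appear. It also shows that the sesquilinear form on the right-hand side of \eqref{EQ:Parseval} is well defined and finite by the Cauchy--Schwarz inequality in $\ell^2(\mI)$.

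Next I would verify the identity first for $f,g$ in the dense subspace $\cml\cap\cm$ (or, more simply, for $f\in\cml$ and $g\in\cm$, where the Fourier inversion formulae \eqref{eq:finv} and \eqref{eq:finv3} hold literally). Writing $g(x)=\sum_{\eta\in\mI}\widehat{g_\ast}(\eta)v_\eta(x)$ by \eqref{eq:finv3} and using the definition \eqref{foul1} of $\widehat f$, we compute
\[
(f,g)_{L^2}=\int_M f(x)\overline{g(x)}\,dx=\sum_{\eta\in\mI}\overline{\widehat{g_\ast}(\eta)}\int_M f(x)\overline{v_\eta(x)}\,dx=\sum_{\eta\in\mI}\widehat f(\eta)\overline{\widehat{g_\ast}(\eta)},
\]
which is exactly \eqref{EQ:Parseval} (the summation index $\xi$ and $\eta$ in the statement being understood as the same index running over $\mI$). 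Interchanging the sum and the integral here is justified since the series for $g$ converges in $L^2(M)$ and $f\in L^2(M)$, so the partial sums of $\int_M f\,\overline{g}$ converge. Alternatively, one can insert both expansions $f=\sum_\xi\widehat f(\xi)u_\xi$ and $g=\sum_\eta\widehat{g_\ast}(\eta)v_\eta$ and apply the biorthogonality $(u_\xi,v_\eta)_{L^2}=\delta_{\xi\eta}$ from \eqref{bio} directly.

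Finally I would pass from the dense subspace to all of $L^2(M)$ by a continuity/density argument: both sides of \eqref{EQ:Parseval} are continuous sesquilinear forms on $L^2(M)\times L^2(M)$ — the left side obviously, and the right side because $f\mapsto\widehat f$ and $g\mapsto\widehat{g_\ast}$ are bounded from $L^2(M)$ into $\ell^2(\mI)$ by Lemma \ref{lnqer} — and they agree on a dense subspace, hence everywhere. The assertion that $(\widehat f,\widehat g)_{\ell_L^2}:=\sum_\xi\widehat f(\xi)\overline{\widehat{g_\ast}(\eta)}$ defines a Hilbert space structure on the image then follows: it is clearly a positive-definite inner product (positive-definiteness using that $\widehat f=0$ forces $f=0$ by injectivity of $\efel$, which in turn follows from the Riesz basis property), and completeness of $\ell_L^2(\mI)$ is inherited from the fact that $\efel$ is, by the two-sided bound in Lemma \ref{lnqer}, a topological isomorphism of the Hilbert space $L^2(M)$ onto its image. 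The only mildly delicate point is bookkeeping the roles of $u_\xi$ versus $v_\xi$ (i.e. $\widehat f$ versus $\widehat{g_\ast}$) so that the biorthogonality is applied in the correct direction; there is no substantive analytic obstacle, as the heavy lifting has already been done in Lemma \ref{lnqer}.
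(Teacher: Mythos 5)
Your proposal is correct and follows essentially the same route as the paper: expand in the biorthogonal systems, apply biorthogonality (equivalently, the definition of $\widehat{f}$ together with the inversion formula \eqref{eq:finv3}), and obtain the $\ell^2$ membership from the Riesz-basis bounds of Lemma \ref{lnqer}. Your extra care with the dense-subspace/continuity step and the sum--integral interchange merely makes explicit what the paper's direct application of the inversion formulae to $L^2$ functions leaves implicit.
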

\begin{proof} The fact that $\widehat{f}, \widehat{g}\in \ell_{L}^2$ follows similar to Proposition 6.1 from \cite{rto:nhs}.  
By the Fourier inversion formula \eqref{eq:finv} and \eqref{eq:finv3}, we have
\begin{align*} (f,g)_{L^2}=&(\sum\limits_{\xi\in\mI}\widehat{f}(\xi)u_{\xi}, \sum\limits_{\xi\in\mI}\widehat{g_{\ast}}(\eta)v_{\eta})\\
=&\sum\limits_{\xi\in\mI}\widehat{f}(\xi)\overline{\widehat{g_{\ast}}(\xi)}=(\widehat{f},\widehat{g})_{\ell_L^2},
\end{align*}
and the proof is complete.
\end{proof}

\begin{prop}\label{admu} 
If $A$ is a $L$-Fourier multiplier by $\sigma_A(\xi)$, then $A^*$ is a $L^*$-Fourier multiplier by $\overline{\sigma_A(\xi)}$.
\end{prop}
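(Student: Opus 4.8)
The plan is to compute $(A^*f, g)_{L^2}$ for test functions and identify the resulting $L^*$-Fourier multiplier action on the $L^*$-Fourier side. First I would note that by Proposition~\ref{PROP:Parceval} we have an isometry-type identity $(f,g)_{L^2}=\sum_{\xi\in\mI}\widehat{f}(\xi)\overline{\widehat{g_\ast}(\xi)}$, which is the natural pairing between the $L$-Fourier coefficients of $f$ and the $L^*$-Fourier coefficients of $g$. Taking $f,g$ in the appropriate test function spaces ($f\in\cml$, $g\in\cm$ so that both sides make sense and all series converge rapidly), I would write
\[
(Af,g)_{L^2}=\sum_{\xi\in\mI}\widehat{Af}(\xi)\overline{\widehat{g_\ast}(\xi)}=\sum_{\xi\in\mI}\sigma_A(\xi)\widehat{f}(\xi)\overline{\widehat{g_\ast}(\xi)},
\]
using that $A$ is an $L$-Fourier multiplier with symbol $\sigma_A$.

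Next I would rewrite the right-hand side as
\[
\sum_{\xi\in\mI}\widehat{f}(\xi)\overline{\,\overline{\sigma_A(\xi)}\,\widehat{g_\ast}(\xi)\,},
\]
and then recognize the conjugated bracket as $\overline{\widehat{h_\ast}(\xi)}$ where $h$ is the function whose $L^*$-Fourier coefficients are $\overline{\sigma_A(\xi)}\widehat{g_\ast}(\xi)$; that is, $h=\efela^{-1}(\overline{\sigma_A}\cdot\widehat{g_\ast})$, so by Proposition~\ref{PROP:Parceval} again $(Af,g)_{L^2}=(f,h)_{L^2}$. Since this holds for all $f$ in a dense subspace, $A^*g=h$, i.e. $\efela(A^*g)(\xi)=\overline{\sigma_A(\xi)}\,\efela(g)(\xi)$, which is exactly the claim that $A^*$ is an $L^*$-Fourier multiplier with symbol $\overline{\sigma_A(\xi)}$. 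I would also remark that one should check $A^*$ indeed maps $\cm$ to $\cm$, which follows since multiplication by $\overline{\sigma_A(\xi)}$ (of polynomial growth, coming from continuity of $A$) preserves $\mathcal{S}(\mI)$ together with (BC+) ensuring the resulting series stays in $\cm$; alternatively this can be taken as part of the standing hypothesis that $A^*$ is well-defined on the relevant domain.

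The main obstacle, such as it is, is bookkeeping with the two distinct Fourier transforms $\efel$ and $\efela$ and making sure the duality pairing in Proposition~\ref{PROP:Parceval} is applied with the correct roles — the asymmetry between $u_\xi$ and $v_\xi$ means one must track which transform acts on which slot. Once that is set up correctly, the proof is a short three-line computation; there is no analytic difficulty since everything takes place on the level of rapidly decaying sequences where all manipulations are justified termwise. A minor point worth stating explicitly is the density of the test function spaces in $L^2(M)$, already recorded in the excerpt, which is what lets us pass from the identity on test functions to the operator identity.
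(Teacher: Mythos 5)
Your proof is correct and follows essentially the same route as the paper: both apply the Parseval identity of Proposition \ref{PROP:Parceval} to $(Af,g)_{L^2}$, pull $\sigma_A(\xi)$ inside the conjugation, and conclude that the $L^*$-Fourier coefficients of $A^*g$ are $\overline{\sigma_A(\xi)}\,\widehat{g_{\ast}}(\xi)$. Your additional step of naming $h=\efela^{-1}(\overline{\sigma_A}\,\widehat{g_{\ast}})$ and invoking density of the test functions is merely a more explicit phrasing of the paper's identification $(Af,g)_{L^2}=(f,A^*g)_{L^2}$ followed by comparison of coefficients.
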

\begin{proof} 
First by \eqref{EQ:Parseval} we write
\begin{equation*} (Af,g)_{L^2}=\sum\limits_{\xi\in\mI}\widehat{Af}(\xi)\overline{\widehat{g_{\ast}}(\xi)}
=\sum\limits_{\xi\in\mI}\sigma_A(\xi)\widehat{f}(\xi)\overline{\widehat{g_{\ast}}(\xi)}
=\sum\limits_{\xi\in\mI}\widehat{f}(\xi)\overline{\overline{\sigma_A(\xi)}\widehat{g_{\ast}}(\xi)}.
\end{equation*}
On the other hand
\[
(Af,g)_{L^2}=(f,A^*g)_{L^2}=\sum\limits_{\xi\in\mI}\widehat{f}(\xi)\overline{\widehat{A^*g_{\ast}}(\xi)}.\]
Therefore
\[
\widehat{A^*g_{\ast}}(\xi)=\overline{\sigma_A(\xi)}\widehat{g_{\ast}}(\xi),
\]
i.e. $A^*$ is a $L^*$-Fourier multiplier by  $\overline{\sigma_A(\xi)}$.
\end{proof}

In the following we will be looking at the membership of operators in the Schatten classes on $L^2(M)$.
Conditions for the $L^2(M)$-boundedness of operators in terms of their global symbols have been obtained
in \cite{rto:nhs}. In the case of $L$-Fourier multipliers, these conditions simplify, and the boundedness of the $L$-symbol
if enough, namely, if $\sup_{\xi\in\mI}|\sigma_A(\xi)|<\infty$ then $A$ is bounded on $L^2(M)$.

As a consequence of the preceding nuclearity considerations, we now give criteria for operators to belong
to the Schatten classes $S_r(L^2(M))$ and we refer to Remark \ref{REM:Schattens} for a 
further discussion.

\begin{cor}\label{t1} 
Let $A:\cml\rightarrow\cml$ 
be a $L$-Fourier multiplier with $L$-symbol $\sigma_A$. Then we have the following properties:
\begin{itemize}
\item[(i)] If  $0<r\leq 1$ and 
\beq\label{inc1}
\sum\limits_{\xi\in\mI}|\sigma_A(\xi)|^r<\infty,
\eq
then $A$ belongs to the Schatten class $S_r(L^2(M))$.\\
\item[(ii)] If $L_M$ is self-adjoint and $0<r<\infty$, then $A\in S_r(L^2(M))$  if and only if \eqref{inc1} holds.
\end{itemize}
\end{cor}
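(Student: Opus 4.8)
The plan is to reduce both statements to the already-established nuclearity machinery together with Oloff's identification of $r$-nuclear operators with the Schatten class $S_r(H)$ (formula \eqref{olo1}) and the abstract spectral theory of $L$-Fourier multipliers.

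For part (i), I would apply Corollary \ref{ind1a} with $p_1=p_2=2$ and $q_1=2$. In that case the condition \eqref{inc5a} reads
\[
\sum_{\xi\in\mI}|\sigma_A(\xi)|^r\,\|u_{\xi}\|_{L^2}^r\,\|v_{\xi}\|_{L^2}^r
=\sum_{\xi\in\mI}|\sigma_A(\xi)|^r<\infty,
\]
since the eigenfunctions are normalised, $\|u_{\xi}\|_{L^2}=\|v_{\xi}\|_{L^2}=1$. Hence \eqref{inc1} is exactly \eqref{inc5a} in this setting, and Corollary \ref{ind1a} gives that $A$ is $r$-nuclear from $L^2(M)$ into $L^2(M)$. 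By Oloff's theorem (the text around \eqref{olo1}) the class of $r$-nuclear operators on a Hilbert space coincides with $S_r(L^2(M))$ for $0<r\le 1$, so $A\in S_r(L^2(M))$. One should also note that $A$ extends to a bounded operator on $L^2(M)$ — this is needed for the Schatten-class statement to make literal sense — and indeed this follows from the discussion preceding the corollary, since $\sum_\xi|\sigma_A(\xi)|^r<\infty$ forces $\sup_\xi|\sigma_A(\xi)|<\infty$.

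For part (ii), the key extra input is self-adjointness of $L_M$: then $v_\xi=u_\xi$ and, crucially, the biorthogonal system $\{u_\xi\}$ is an \emph{orthonormal} basis of $L^2(M)$. By Remark \ref{REM1} (or formula \eqref{sy1a}), an $L$-Fourier multiplier $A$ satisfies $Au_\xi=\sigma_A(\xi)u_\xi$, so the numbers $\sigma_A(\xi)$ are precisely the eigenvalues of $A$ with respect to the orthonormal eigenbasis $\{u_\xi\}$; equivalently, $A=\phi(L)$ is a function of the self-adjoint operator $L$ via the spectral theorem. When $A$ is moreover bounded and compact, its singular values are exactly $\{|\sigma_A(\xi)|\}_{\xi\in\mI}$ (rearranged in decreasing order). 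Then, directly from the definition of the Schatten quasi-norm in terms of singular values, for any $0<r<\infty$,
\[
\|A\|_{S_r}^r=\sum_{\xi\in\mI}|\sigma_A(\xi)|^r,
\]
so $A\in S_r(L^2(M))$ if and only if \eqref{inc1} holds. (For $0<r\le 1$ one could alternatively invoke Oloff again, but the singular-value computation works uniformly for all $r\in(0,\infty)$, which is what the statement requires.)

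I do not expect a serious obstacle here; the only points that need care are bookkeeping ones. In part (i) one must make sure the hypotheses of Corollary \ref{ind1a} (hence of Theorem \ref{sp1} and Theorem \ref{ch2}) are genuinely met with $p_1=p_2=2$, and that $A$ — a priori only defined on $C_L^\infty(M)$ — extends continuously to $L^2(M)$ so that speaking of its Schatten norm is legitimate; this extension is guaranteed by boundedness of $\sigma_A$. In part (ii) the mild subtlety is to justify that $\{u_\xi\}$ is orthonormal (not merely biorthogonal) when $L_M$ is self-adjoint, so that the singular values of the multiplier are literally the $|\sigma_A(\xi)|$; this is the standard spectral theorem for the self-adjoint operator $L$ with discrete spectrum, combined with $C_{L^*}^\infty(M)=C_L^\infty(M)$.
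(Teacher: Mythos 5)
Your part (i) is exactly the paper's argument: apply Corollary \ref{ind1a} with $p_1=p_2=2$, use the normalisation $\|u_{\xi}\|_{L^2}=\|v_{\xi}\|_{L^2}=1$ to see that \eqref{inc1} is \eqref{inc5a}, and conclude membership in $S_r(L^2(M))$ via Oloff's identity \eqref{olo1}; your extra remark on the bounded $L^2$-extension is a harmless refinement which in the paper is implicit in the nuclearity statement itself. In part (ii) you reach the same conclusion by a genuinely more direct route: you observe that under self-adjointness $\{u_\xi\}$ is orthonormal and $A$ is diagonal in this basis with entries $\sigma_A(\xi)$, so the singular values of the (compact) extension are the numbers $|\sigma_A(\xi)|$ and $\|A\|_{S_r}^r=\sum_{\xi\in\mI}|\sigma_A(\xi)|^r$ for every $0<r<\infty$ straight from the definition of the Schatten quasi-norm. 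The paper instead first computes the Hilbert--Schmidt norm via the Plancherel identity of Proposition \ref{PROP:Parceval}, then uses Proposition \ref{admu} and the multiplier calculus to get $\sigma_{|A|^{2s}}(\xi)=|\sigma_A(\xi)|^{2s}$, and reduces general $r$ to $r=2$ through $\|A\|_{S_r}^r=\||A|^{r/2}\|_{S_2}^2$. Both arguments hinge on the same two facts (orthonormality of $\{u_\xi\}$ and $Au_\xi=\sigma_A(\xi)u_\xi$ from \eqref{sy1a}); yours is shorter and bypasses the adjoint/composition machinery, while the paper's version stays inside its $L$-Fourier multiplier calculus, which is precisely the point it wants to illustrate in Remark \ref{REM:Schattens}. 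No gaps: your identification of the singular values is legitimate because boundedness (hence existence of $A^*$) follows either from $\sup_\xi|\sigma_A(\xi)|<\infty$ in the ``if'' direction or from the assumption $A\in S_r$ in the ``only if'' direction.
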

\begin{proof} (i) We will prove that under condition \eqref{inc1} the operator $A$ is $r$-nuclear on $L^2(M)$. Then the result will follow from the Oloff's equivalence \eqref{olo1} that holds in the setting of Hilbert spaces.

\medskip
By Corollary \ref{ind1a} applied to the case $p_1=p_2=2$ 
 and the fact that $\|u_{\xi}\|_{L^2}=\|v_{\xi}\|_{L^2}=1$, 
we obtain 
\beq\label{inc7b}\sum\limits_{\xi\in\mI}|\sigma_A(\xi)|^r\|u_{\xi}\|_{L^{2}}^r\|v_{\xi}\|_{L^{2}}^r=\sum\limits_{\xi\in\mI}|\sigma_A(\xi)|^r<\infty.\eq
Hence, $A$ is $r$-nuclear from $L^2(M)$ into $L^2(M)$. 

\medskip
(ii) Now, if $L_M$ is self-adjoint and $0<r<\infty$, we first observe that the system $\{u_{\xi}\}$ is orthonormal. For the case $r=2$ of Hilbert-Schmidt operators
 we note that, by the Plancherel identity in Proposition \ref{PROP:Parceval}
 and the well known characterisation of Hilbert-Schmidt class in terms of orthonormal bases, we have
\begin{align*} \|A\|_{S_2}^2=&\sum\limits_{\xi\in\mI}\|Au_{\xi}\|_{L^2}^2=\sum\limits_{\xi\in\mI}\|\efel (Au_{\xi})\|_{\ell_L^2}^2\\
=&\sum\limits_{\xi\in\mI}|\sigma_A(\xi)|^2\|\efel (u_{\xi})\|_{\ell_L^2}^2=\sum\limits_{\xi\in\mI}|\sigma_A(\xi)|^2\|u_{\xi}\|_{L^2}^2\\
=&\sum\limits_{\xi\in\mI}|\sigma_A(\xi)|^2.
\end{align*}
On the other hand, since $L$ is self-adjoint and by Proposition \ref{admu}, $A^*$ is a $L^{*}$-Fourier multiplier by $\overline{\sigma_A(\xi)}$.
 Hence $A^*A$ is also a $L$-Fourier multiplier with symbol $\sigma_{A^*A}(\xi)=|\sigma_A(\xi)|^2$. Since $A^*A$ is a positive operator, more generally we have
$\sigma_{|A|^{2s}}(\xi)=|\sigma_A(\xi)|^{2s}$ for all $s\in\er$.

\medskip 
Therefore  
\begin{equation*} \|A\|_{S_r}^r= \||A|\|_{S_r}^r=\||A|^{\frac{r}{2}}\|_{S_2}^2
=\sum\limits_{\xi\in\mI}\sigma_{|A|^{\frac{r}{2}}}(\xi)^2=\sum\limits_{\xi\in\mI}|\sigma_A(\xi)|^r,
\end{equation*}
completing the proof.
\end{proof}

\begin{rem} \label{REM:Schattens}
When studying Schatten classes for multipliers  directly from the definition, the problem of 
 understanding the composition $A^*A$ arises. Even in the simplest case of a $L$-Fourier multiplier $A$, we note the difficulty since $A^*$ is a $L^*$-Fourier multiplier, but not necessarily a $L$-Fourier multiplier. 
 This observation explains why the method of studying Schatten classes
 via the notion of $r$-nuclearity on Banach spaces is more appropriate in this case. 
 The Corollary \ref{t1} gives a taste of it. The difference between two parts of Corollary \ref{t1} is that if $L_M$ is not
 self-adjoint, the operator $AA^*$ is not an $L$-Fourier multiplier, and its symbol involves a more
 complicated symbolic calculus, as developed for general operators in \cite{rto:nhs}.
\end{rem}

 We now consider an example in the case of $L$-Fourier multipliers. According to Definition \ref{Lfm}, it is clear
 that $L$ itself is a $L$-Fourier multiplier. Indeed, if $f\in\cml$, by \eqref{eq:finv} we obtain
\[Lf(x)=\sum\limits_{\xi\in\mI}Lu_{\xi}(x)\widehat{f}(\xi)=\sum\limits_{\xi\in\mI}\lambda_{\xi}u_{\xi}(x)\widehat{f}(\xi).\]
Hence $\sigma_{L}(\xi)=\lambda_{\xi}$.\\

Now, if $-L$ is a positive operator, i.e. if all eigenvalues satisfy $-\lambda_\xi\geq 0$ for all $\xi\in\mI$
(thinking of an example $L=\Delta$ the Laplacian), the operator
$I-L$ is strictly positive and we can define $(I-L)^{-s}$ for every real $s>0$. The operator 
 $(I-L)^{-\frac{s}{m}}$ is a $L$-Fourier multiplier and  $\sigma_{(I-L)^{-\frac{s}{m}}}(\xi)=
 (1-\lambda_{\xi})^{-\frac{s}{m}}\cong\langle\xi\rangle^{-s}$.\\

Hence, by Corollary \ref{t1} (i) and under Assumption \eqref{we2} on the exponent $s_0$, we obtain: 

\begin{cor}\label{cs1a} 
Assume that $-L$ is positive and 
let $0<r\leq 1$. Then
$(I-L)^{-\frac{s}{m}}\in S_r(L^2(M))$ for $s>\frac{s_0}{r}.$
\end{cor}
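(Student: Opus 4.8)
The plan is to apply Corollary \ref{t1} (i) directly to the $L$-Fourier multiplier $A=(I-L)^{-\frac{s}{m}}$, whose $L$-symbol has just been identified as $\sigma_A(\xi)=(1-\lambda_\xi)^{-\frac{s}{m}}\cong\langle\xi\rangle^{-s}$. According to Corollary \ref{t1} (i), to conclude $A\in S_r(L^2(M))$ it suffices to verify that $\sum_{\xi\in\mI}|\sigma_A(\xi)|^r<\infty$. First I would substitute the symbol asymptotics to reduce this to $\sum_{\xi\in\mI}\langle\xi\rangle^{-sr}<\infty$, up to a fixed multiplicative constant coming from the $\cong$ in the symbol estimate.

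Next, I would invoke the summability hypothesis \eqref{we2}: there exists $s_0\in\er$ with $\sum_{\xi\in\mI}\langle\xi\rangle^{-s_0}<\infty$. Since $\langle\xi\rangle\geq 1$ by definition \eqref{we1}, the map $t\mapsto \sum_{\xi\in\mI}\langle\xi\rangle^{-t}$ is non-increasing in $t$, so the series converges for every $t\geq s_0$. Thus if $sr\geq s_0$, i.e. $s\geq \frac{s_0}{r}$, the series $\sum_{\xi\in\mI}\langle\xi\rangle^{-sr}$ converges; in particular this holds for $s>\frac{s_0}{r}$, which is the range claimed in the statement. (One small care point: one should first fix $s>\frac{s_0}{r}$, note that $-L$ positive makes $I-L$ strictly positive so $(I-L)^{-\frac sm}$ is genuinely well-defined as a bounded $L$-Fourier multiplier, and then run the estimate.)

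Putting these together, condition \eqref{inc1} of Corollary \ref{t1} (i) is satisfied for $A=(I-L)^{-\frac sm}$ whenever $s>\frac{s_0}{r}$, hence $A\in S_r(L^2(M))$, which is exactly the assertion. There is essentially no obstacle here: the corollary is a one-line application of Corollary \ref{t1} (i) combined with assumption \eqref{we2}. The only thing requiring a modicum of attention is tracking that the asymptotic equivalence $\sigma_A(\xi)\cong\langle\xi\rangle^{-s}$ is uniform in $\xi$ so that raising to the $r$-th power and summing is legitimate, and noting the monotonicity in the exponent that lets us pass from the single given exponent $s_0$ to all larger exponents $sr$.
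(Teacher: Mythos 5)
Your proof is correct and is essentially the paper's own argument: the paper derives the corollary directly from Corollary \ref{t1}~(i) using the multiplier symbol $\sigma_{(I-L)^{-s/m}}(\xi)=(1-\lambda_\xi)^{-s/m}\cong\langle\xi\rangle^{-s}$ together with assumption \eqref{we2}, exactly as you do. Your added remarks (uniformity of the symbol equivalence, $\langle\xi\rangle\geq 1$ giving monotonicity in the exponent) are just the details the paper leaves implicit.
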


We now derive some consequences when bounds on the biorthogonal system are avalaible.
We will assume that there exist constants $C_1,C_2, \nu_1, \nu_2>0$ such that
\beq\label{con1a} 
\|u_{\xi}\|_{L^{\infty}(M)}\leq C_1\langle\xi\rangle^{\nu_1}
\quad\textrm{ and }\quad
\|v_{\xi}\|_{L^{\infty}(M)}\leq C_2\langle\xi\rangle^{\nu_2}.
\eq
We will also require the following lemma.
\begin{lem} \label{LEM:ineq} For the biorthogonal system $u_{\xi}, v_{\xi}$ associated to the operator $L$ and satisfying \eqref{con1a} we have
\begin{equation}\label{ineqA}
 \|u_{\xi}\|_{L^{q}(M)} \leq\left\{
\begin{array}{rl}
(C_1\langle\xi\rangle^{\nu_1})^{1-\frac{2}{q}} ,& 2\leq q\leq \infty,\\
|M|^{\frac{1}{q}-\frac{1}{2}} , & 1\leq q\leq 2.
\end{array} \right.
\end{equation}

\medskip
The analogous inequalities hold for $\|v_{\xi}\|_{L^{q}(M)} $ with $C_2$ and $\nu_2$ instead
 of $C_1$ and $\nu_1$, respectively, in the above inequalities, namely,
 \begin{equation}\label{ineqA2}
 \|v_{\xi}\|_{L^{q}(M)} \leq\left\{
\begin{array}{rl}
(C_2\langle\xi\rangle^{\nu_2})^{1-\frac{2}{q}} ,& 2\leq q\leq \infty,\\
|M|^{\frac{1}{q}-\frac{1}{2}} , & 1\leq q\leq 2.
\end{array} \right.
\end{equation}
\end{lem}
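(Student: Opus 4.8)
The plan is to prove the bound for $u_\xi$ by interpolating between the $L^2$-normalisation and the $L^\infty$-bound in \eqref{con1a}; the bound for $v_\xi$ follows verbatim with $C_2,\nu_2$ in place of $C_1,\nu_1$. First I would treat the case $2\le q\le\infty$. Here one uses the elementary $L^p$-interpolation inequality: for a function on a measure space and $2\le q\le\infty$, writing $\theta=2/q\in[0,1]$, one has $\|u_\xi\|_{L^q(M)}\le\|u_\xi\|_{L^2(M)}^{\theta}\|u_\xi\|_{L^\infty(M)}^{1-\theta}$, which is just H\"older applied to $|u_\xi|^q=|u_\xi|^2\cdot|u_\xi|^{q-2}$. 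Since $\|u_\xi\|_{L^2(M)}=1$ by the normalisation assumption and $\|u_\xi\|_{L^\infty(M)}\le C_1\langle\xi\rangle^{\nu_1}$ by \eqref{con1a}, this gives $\|u_\xi\|_{L^q(M)}\le (C_1\langle\xi\rangle^{\nu_1})^{1-2/q}$, which is the first line of \eqref{ineqA}.

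Next I would treat the case $1\le q\le 2$. Here, since $M$ is a compact manifold with finite volume $|M|$, one uses the embedding $L^2(M)\hookrightarrow L^q(M)$ for $q\le 2$: by H\"older's inequality with exponents $2/q$ and $2/(2-q)$ applied to $|u_\xi|^q\cdot 1$, we get $\|u_\xi\|_{L^q(M)}\le |M|^{1/q-1/2}\|u_\xi\|_{L^2(M)}=|M|^{1/q-1/2}$, using again the $L^2$-normalisation. This yields the second line of \eqref{ineqA}. (Note the two formulae agree at $q=2$, both giving the bound $1$, so there is no inconsistency at the endpoint.)

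Finally, the inequalities \eqref{ineqA2} for $v_\xi$ are obtained by exactly the same two arguments, using $\|v_\xi\|_{L^2(M)}=1$ and the second estimate in \eqref{con1a}, namely $\|v_\xi\|_{L^\infty(M)}\le C_2\langle\xi\rangle^{\nu_2}$. There is no genuine obstacle here: the lemma is a routine combination of H\"older's inequality and the standing normalisation of the biorthogonal system, with the only mild point being to record the two regimes $q\ge 2$ and $q\le 2$ separately, since the first is controlled by the $L^\infty$-growth of the eigenfunctions while the second is controlled purely by the finite volume of $M$. I would write the proof in a few lines, presenting the $q\ge 2$ interpolation, then the $q\le 2$ embedding, and remarking that the $v_\xi$ case is identical.
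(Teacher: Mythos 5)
Your proposal is correct and follows essentially the same route as the paper: interpolation $\|u_\xi\|_{L^q}\leq\|u_\xi\|_{L^2}^{2/q}\|u_\xi\|_{L^\infty}^{1-2/q}$ together with the normalisation $\|u_\xi\|_{L^2}=1$ and \eqref{con1a} for $2\leq q\leq\infty$, and H\"older against the constant function (finite volume of $M$) for $1\leq q\leq 2$, with the $v_\xi$ case verbatim. The only cosmetic difference is that the paper treats $q=\infty$ separately as a direct restatement of \eqref{con1a}, whereas you absorb it into the interpolation endpoint.
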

\begin{proof} 
If $q=\infty$, the inequality \eqref{ineqA} is the same as \eqref{con1a}.
 If $2\leq q<\infty$ we apply the inequality 
$$\|f\|_{L^{q}}\leq \|f\|_{L^{\infty}}^{\frac{q-2}{q}}\|f\|_{L^{2}}^{\frac{2}{q}}.$$
Then
\[
\|u_{\xi}\|_{L^{q}(M)} \leq \|u_{\xi}\|_{L^{\infty}}^{\frac{q-2}{q}}\leq (C_1\langle\xi\rangle^{\nu_1})^{1-\frac{2}{q}} .
\]
Finally, for $1\leq q\leq 2$, using H\"older's inequality, we get 
 \[\|u_{\xi}\|_{L^{q}(M)}^{q} =  \int\limits_M|u_{\xi}(y)|^{q}dy \leq   \left(\int\limits_M1dy\right)^{1-\frac{q}{2}} \left(\int\limits_M|u_{\xi}(y)|^{q\frac{2}{q}}dy\right)^{\frac{q}{2}}
\leq  |M|^{1-\frac{q}{2}}.
\]
This implies \eqref{ineqA}, with the proof of \eqref{ineqA2} being completely analogous.
\end{proof}

Under the assumption of \eqref{con1a} for the biorthogonal system we have:
\begin{thm} \label{sevc}Let $0<r\leq 1$ and let $A:\cml\rightarrow \cml$ be a continuous linear operator with $L$-symbol $\sigma_A$ such that
$|\sigma(x,\xi)|\leq \gamma(\xi)$ for all $(x,\xi)$ and some function $\gamma:\mI\rightarrow [0,\infty)$. Then 
we have the following properties:
\begin{enumerate}
\item[(i)] if $1\leq p_1\leq 2,\,\, 2\leq p_2<\infty$, and 
\[\sum\limits_{\xi\in\mI}(\langle\xi\rangle^{\nu_1(1-\frac{2}{p_2})+\nu_2(\frac{2}{p_1}-1)}\gamma(\xi))^r<\infty,\]
then $A$ is $r$-nuclear from $L^{p_1}(M)$ to $L^{p_2}(M)$.
\item[(ii)] if $2\leq p_2<\infty$ and 
\[\sum\limits_{\xi\in\mI}(\langle\xi\rangle^{\nu_1(1-\frac{2}{p_2})}\gamma(\xi))^r<\infty,\]
then $A$ is $r$-nuclear from $L^{p_1}(M)$ to $L^{p_2}(M)$ for all $2\leq p_1<\infty$.
\item[(iii)] $1\leq p_1\leq 2$ and 
\[\sum\limits_{\xi\in\mI}(\langle\xi\rangle^{\nu_2(\frac{2}{p_1}-1)}\gamma(\xi))^r<\infty,\]
then $A$ is $r$-nuclear from $L^{p_1}(M)$ to $L^{p_2}(M)$ for all $1\leq p_2\leq 2$.
\item[(iv)]  if
\[\sum\limits_{\xi\in\mI}\gamma(\xi)^r<\infty,\]
then $A$ is $r$-nuclear from $L^{p_1}(M)$ to $L^{p_2}(M)$ for all $2\leq p_1<\infty$ and all $1\leq p_2\leq 2$.
\end{enumerate}
\end{thm}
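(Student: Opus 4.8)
The plan is to reduce all four cases to the master criterion in Theorem \ref{sp1}, which guarantees $r$-nuclearity from $L^{p_1}(M)$ to $L^{p_2}(M)$ once we control the quantity $\sum_{\xi\in\mI}\|\sigma_A(\cdot,\xi)u_{\xi}(\cdot)\|_{L^{p_2}}^r\|v_{\xi}\|_{L^{q_1}}^r$, where $\frac{1}{p_1}+\frac{1}{q_1}=1$. First I would use the pointwise bound $|\sigma_A(x,\xi)|\leq\gamma(\xi)$ to factor the symbol out: $\|\sigma_A(\cdot,\xi)u_{\xi}(\cdot)\|_{L^{p_2}}\leq\gamma(\xi)\|u_{\xi}\|_{L^{p_2}}$. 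This turns the series into $\sum_{\xi\in\mI}\gamma(\xi)^r\|u_{\xi}\|_{L^{p_2}}^r\|v_{\xi}\|_{L^{q_1}}^r$, and the whole problem becomes one of estimating $\|u_{\xi}\|_{L^{p_2}}$ and $\|v_{\xi}\|_{L^{q_1}}$ via Lemma \ref{LEM:ineq}.

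\textbf{Case analysis.} The four cases are exactly the four combinations obtained by placing $p_2$ and $q_1$ on either side of the exponent $2$, keeping in mind that $1\leq p_1\leq 2$ corresponds to $q_1\geq 2$ while $2\leq p_1<\infty$ corresponds to $1\leq q_1\leq 2$.
\emph{Case (i):} $1\leq p_1\leq 2$ gives $q_1\geq 2$, so \eqref{ineqA2} yields $\|v_{\xi}\|_{L^{q_1}}\leq(C_2\langle\xi\rangle^{\nu_2})^{1-\frac{2}{q_1}}=(C_2\langle\xi\rangle^{\nu_2})^{\frac{2}{p_1}-1}$, using $1-\frac{2}{q_1}=\frac{2}{p_1}-1$; and $2\leq p_2<\infty$ gives $\|u_{\xi}\|_{L^{p_2}}\leq(C_1\langle\xi\rangle^{\nu_1})^{1-\frac{2}{p_2}}$ from \eqref{ineqA}. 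Multiplying produces the exponent $\nu_1(1-\frac{2}{p_2})+\nu_2(\frac{2}{p_1}-1)$, matching the hypothesis exactly (up to the harmless constant $(C_1^{1-2/p_2}C_2^{2/p_1-1})^r$, which can be absorbed).
\emph{Case (ii):} here $2\leq p_1<\infty$ so $q_1\leq 2$, and \eqref{ineqA2} gives the $\xi$-independent bound $\|v_{\xi}\|_{L^{q_1}}\leq|M|^{\frac{1}{q_1}-\frac12}$; combining with $\|u_{\xi}\|_{L^{p_2}}\leq(C_1\langle\xi\rangle^{\nu_1})^{1-\frac{2}{p_2}}$ leaves only the $\langle\xi\rangle^{\nu_1(1-2/p_2)}$ factor.
\emph{Case (iii):} now $p_2\leq 2$ gives $\|u_{\xi}\|_{L^{p_2}}\leq|M|^{\frac{1}{p_2}-\frac12}$ ($\xi$-independent) while $q_1\geq 2$ contributes $\langle\xi\rangle^{\nu_2(2/p_1-1)}$.
\emph{Case (iv):} both $\|u_{\xi}\|_{L^{p_2}}$ and $\|v_{\xi}\|_{L^{q_1}}$ are bounded by $\xi$-independent constants (powers of $|M|$), so only $\sum\gamma(\xi)^r$ survives. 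In every case, after absorbing the finitely many fixed constants into the convergent series, the hypothesis of that part is precisely the hypothesis \eqref{inc5} of Theorem \ref{sp1}, so $r$-nuclearity follows.

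\textbf{Main obstacle.} There is no serious obstacle here; the result is essentially a bookkeeping exercise built on two earlier results. The one point requiring a little care is the algebraic identity $1-\frac{2}{q_1}=\frac{2}{p_1}-1$ whenever $\frac{1}{p_1}+\frac{1}{q_1}=1$ (equivalently $\frac{1}{q_1}=1-\frac{1}{p_1}$), which is what aligns the exponent coming out of Lemma \ref{LEM:ineq} with the exponent written in the statement; once this is noted, the four cases are immediate. One should also check that absorbing the constants $C_1,C_2,|M|$ raised to the $r$-th power does not affect convergence — it does not, since they are fixed positive numbers independent of $\xi$ — and that the hypotheses in each part indeed force $p_1,p_2$ into the ranges required by Lemma \ref{LEM:ineq}, which they do by inspection.
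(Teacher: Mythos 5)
Your proposal is correct and follows essentially the same route as the paper: reduce to Theorem \ref{sp1} with the factorisation $g_{\xi}=\sigma_A(\cdot,\xi)u_{\xi}$, $h_{\xi}=\overline{v_{\xi}}$, bound the norms via Lemma \ref{LEM:ineq}, and use the identity $1-\frac{2}{q_1}=\frac{2}{p_1}-1$ to match the exponents. In fact you spell out cases (ii)--(iv), which the paper only indicates as "argued in a similar way" after proving (i).
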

\begin{proof} 
We write $g_{\xi}(x):=u_{\xi}(x)\sigma_A(x,\xi),\,\, h_{\xi}(y):=\overline{v_{\xi}(y)}$ for the decomposition of the kernel of $A$ as in the proof of Theorem \ref{sp1}. We are just going to prove (i), the other statements can be argued in a similar way. Let us fix 
  $q_1$ such that
$\frac{1}{p_1}+\frac{1}{q_1}=1$.

We note that, by using  \eqref{con1a} and Lemma \ref{LEM:ineq} for $p_2, q_1$ respectively we obtain
\[\|g_{\xi}\|_{L^{p_2}}^r\|h_{\xi}\|_{L^{q_1}}^r\leq (\gamma(\xi)\langle\xi\rangle^{\nu_1(\frac{p_2-2}{p_2})+\nu_2(\frac{q_1-2}{q_1})})^r.\]
Since $\frac{q_1-2}{q_1}=1-\frac{2}{q_1}=1-2(1-\frac{1}{p_1})=\frac{2}{p_1}-1$, the $r$-nuclearity of $A$ then follows from Theorem \ref{sp1}.
\end{proof}
A case where the situation above arises is when we dispose of a bound for the symbol $\sigma_A$
 of the type:
 \beq\label{syc}|\sigma_A(x,\xi)|\leq C\langle\xi\rangle^{-s},\eq
for some suitable positive constants $C,s$. 

 In the corollary below we consider the case $p=p_1=p_2$, in which case we have:
\begin{cor}\label{sevc2} Let $0<r\leq 1$ and let $A:\cml\rightarrow\cml$ be a continuous linear operator with $L$-symbol $\sigma_A$ such that \eqref{con1a} and \eqref{syc} hold. 
Then we have the following properties.
\begin{enumerate} 
\item[(i)] If $2\leq p<\infty$ and $s\geq \nu_1(\frac{p-2}{p})+\frac{s_0}{r}$, so that
\[\sum\limits_{\xi\in\mI}\langle\xi\rangle^{(\nu_1(\frac{p-2}{p})-s)r}<\infty,\]
then $A$ is $r$-nuclear from $L^{p}(M)$ into $L^{p}(M)$.
\item[(ii)] If $1\leq p\leq 2$ and $s\geq \nu_2(\frac{2}{p}-1)+\frac{s_0}{r}$, so that 
\[\sum\limits_{\xi\in\mI}\langle\xi\rangle^{(\nu_2(\frac{2}{p}-1)-s)r}<\infty,\]
then $A$ is $r$-nuclear from $L^{p}(M)$ to $L^{p}(M)$.
\item[(iii)] If additionally in $(i)$ or $(ii)$ we have $r\leq \frac 23$ then \eqref{gltr1} holds.  
\item[(iv)] If additionally in $(i)$ or $(ii)$ we have  $\frac{1}{r}=1+|\frac 12-\frac{1}{p}|$, then \eqref{gltr1} holds.
\end{enumerate}
\end{cor}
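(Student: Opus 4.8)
The plan is to reduce Corollary \ref{sevc2} entirely to the results already established, so that the "proof" is essentially a bookkeeping exercise identifying the correct parameters. First I would observe that bound \eqref{syc}, namely $|\sigma_A(x,\xi)|\leq C\langle\xi\rangle^{-s}$, is exactly the hypothesis "$|\sigma(x,\xi)|\leq\gamma(\xi)$" of Theorem \ref{sevc} with the choice $\gamma(\xi)=C\langle\xi\rangle^{-s}$. So each of parts (i) and (ii) should follow by feeding this $\gamma$ into the appropriate case of Theorem \ref{sevc} and then checking that the series-convergence hypothesis there is implied by the stated lower bound on $s$ together with Assumption \eqref{we2} on $s_0$.

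For part (i) (the case $2\leq p<\infty$, so $1\leq p_1=p\leq 2$ fails and instead $p_1=p\geq 2$), I would note that the relevant case of Theorem \ref{sevc} is case (ii): with $p_2=p$ and $p_1=p\geq 2$, the hypothesis there reads $\sum_{\xi\in\mI}(\langle\xi\rangle^{\nu_1(1-\frac{2}{p})}\gamma(\xi))^r<\infty$, i.e. $C^r\sum_{\xi\in\mI}\langle\xi\rangle^{(\nu_1(1-\frac{2}{p})-s)r}<\infty$. This series converges provided $(\nu_1\frac{p-2}{p}-s)r\leq -s_0$, i.e. $s\geq \nu_1\frac{p-2}{p}+\frac{s_0}{r}$, which is precisely the stated condition; then $r$-nuclearity of $A:L^p(M)\to L^p(M)$ follows. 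Symmetrically, for part (ii) (the case $1\leq p\leq 2$) I would invoke case (iii) of Theorem \ref{sevc} with $p_1=p_2=p\leq 2$: the hypothesis there becomes $\sum_{\xi\in\mI}(\langle\xi\rangle^{\nu_2(\frac{2}{p}-1)}\gamma(\xi))^r<\infty$, which by the same computation holds under $s\geq \nu_2(\frac{2}{p}-1)+\frac{s_0}{r}$, giving $r$-nuclearity of $A:L^p(M)\to L^p(M)$.

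For part (iii): once $A$ is $r$-nuclear from $L^p(M)$ to $L^p(M)$ and $0<r\leq\frac23$, the trace formula \eqref{gltr1} is exactly the last assertion of Theorem \ref{sp1} (the Grothendieck $\frac23$-nuclear statement); so one just has to remark that the nuclearity hypothesis \eqref{inc5} of Theorem \ref{sp1} with $p_1=p_2=p$ is the finiteness of $\sum_{\xi}\|\sigma_A(\cdot,\xi)u_\xi\|_{L^p}^r\|v_\xi\|_{L^q}^r$, which by Lemma \ref{LEM:ineq} and \eqref{con1a} is dominated by the very series shown finite in (i) or (ii). Part (iv) is identical except that instead of $r\leq\frac23$ one has the Latif--Reinov relation $\frac1r=1+|\frac12-\frac1p|$, and \eqref{gltr1} then follows from Corollary \ref{rlgltr}.

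I do not expect a genuine obstacle here; the content is all in Theorems \ref{sp1} and \ref{sevc}, and the only thing requiring care is matching the case of Theorem \ref{sevc} to the sign constraint on $p$ (i.e. remembering that "$p\geq 2$" forces one into case (ii) and "$p\leq 2$" into case (iii), with the mixed cases (i) and (iv) of that theorem irrelevant when $p_1=p_2$) and verifying the elementary equivalence $(\text{exponent})\cdot r\leq -s_0 \iff s\geq (\text{threshold})+\frac{s_0}{r}$ using \eqref{we2}. The mild delicacy worth a sentence in the write-up is that in part (iii)/(iv) one should point out that the symbol bound \eqref{syc} together with \eqref{con1a} makes the weighted series of Theorem \ref{sp1} finite \emph{a fortiori}, so that \eqref{trsglb} and hence \eqref{gltr1} are legitimately available.
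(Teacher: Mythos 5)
Your proposal is correct and follows essentially the same route as the paper: parts (i) and (ii) are obtained by specialising $\gamma(\xi)=C\langle\xi\rangle^{-s}$ in parts (ii) and (iii) of Theorem \ref{sevc} and using \eqref{we2} to check the exponent condition, while parts (iii) and (iv) are deduced from the trace statements \eqref{gltr1} of Theorem \ref{sp1} and from Corollary \ref{rlgltr}, respectively. Your extra remark that \eqref{con1a} and Lemma \ref{LEM:ineq} make the hypothesis \eqref{inc5} hold \emph{a fortiori} is exactly the implicit step in the paper's brief argument, so nothing is missing.
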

\begin{proof} (i) We set $q$ such that $\frac{1}{p}+\frac{1}{q}=1$. We note that for $\gamma(\xi)=C\langle\xi\rangle^{-s}$ with
 $C, s$ as in \eqref{syc} and taking into account \eqref{we2} we obtain 
\[\sum\limits_{\xi\in\mI}(\langle\xi\rangle^{\nu_1(1-\frac{2}{p})}\gamma(\xi))^r=C^r\sum\limits_{\xi\in\mI}\langle\xi\rangle^{(\nu_1(\frac{p-2}{p})-s)r}<\infty\]
provided that $\nu_1(\frac{p-2}{p})r-sr\leq -s_0$. The $r$-nuclearity of $A$ follows now from part (ii) of Theorem \ref{sevc}.

The proof of (ii) can be deduced from (iii) of Theorem \ref{sevc} in a similar way.

 The part (iii) follows analogously as for \eqref{gltr1} and (iv) follows analogously to the corresponding argument for Corollary \ref{rlgltr}.
\end{proof}

\section{Applications and examples}
\label{SEC:example}

In this section we consider special cases of boundary value problems $L_{M}$, for the  
manifold  $M=[0,1]^n$.  
These provide some examples of problems where our method is applicable and show how
to apply it in similar settings.

\subsection{Non-periodic boundary conditions}
\label{SEC:nonper}

 We start with the case of $L={\rm O}_h^{(n)}$ which we now define.
 The operator ${\rm O}_h^{(n)}$ is a natural extension of the one-dimensional operator ${\rm O}_h^{(1)}$ 
 considered in detail in 
 \cite{rto:nhs,Kanguzhin_Tokmagambetov_Tulenov}, 
 and an extension of the setting of periodic operators to the non-periodic setting. 

\medskip
We first formulate a characterisation of Hilbert-Schmidt operators in terms of the ${\rm O}_h^{(n)}$-symbols. For the sake of clarity 
 we will write $L_h$ instead of ${\rm O}_h^{(n)}$. We briefly recall the basic facts about $L_h$. 

We set $M=\overline{\Omega}$ for $\Omega=(0,1)^n$ and 
\[ \arn_+:=\{h=(h_1,\dots,h_n)\in\arn : h_j>0 \mbox{ for every } j=1,\dots,n\}.\] 
 For $h\in\arn_+$, the operator $L_h$ on $\Omega$ is defined by 
\beq\label{lae1} L_h =\Delta=\sum_{j=1}^{n}\frac{\partial^2}{\partial x_j^2},\eq
together with the boundary conditions (BC):
\begin{equation}\label{EQ:BCh}
h_{j} f(x)|_{x_{j}=0}=f(x)|_{x_{j}=1},\quad
h_{j} \frac{\partial f}{\partial x_j}(x)|_{x_{j}=0}=\frac{\partial f}{\partial x_j}(x)|_{x_{j}=1},
\quad j=1,\ldots,n,
\end{equation}
and the domain
\[D(L_{h})=\{f\in L^{2}(\Omega) : \Delta f\in L^{2}(\Omega):\;
f \textrm{ satisfies \eqref{EQ:BCh}} \}.\]

In order to describe the corresponding biorthogonal system, we first note that since $b^0=1$ for all $b>0$, we can define $0^0=1$. In particular
 we write 
 $$h^x=h_1^{x_1}\cdots h_n^{x_n}=\prod\limits_{j=1}^{n}h_{j}^{x_{j}}$$ 
 for $x\in [0,1]^n$. 
Then, the system of
eigenfunctions of the operator $L_h$ is
$$\{u_{\xi}(x)=h^{x}\exp(i 2\pi\xi x ), \xi\in
\mathbb{Z}^{n}\}$$ 
and the conjugate system is
$$\{v_{\xi}(x)=h^{-x}\exp(i 2\pi\xi x ),\,\, \xi\in
\mathbb{Z}^{n}\},$$ 
where $\xi x=\xi_{1}x_{1}+ ... + \xi_{n}x_{n}$. Note that
$u_{\xi}(x)=\prod\limits_{j=1}^{n}u_{\xi_{j}}(x_{j}),$ where
$u_{\xi_{j}}(x_{j})=h_{j}^{x_{j}}\exp(i 2\pi \xi_{j} x_{j})$.\\
 
We can now write an operator $A$ on $C^\infty_{L_h}(\Omega)$ in terms of its $L_h$-symbol in the following way
\[
Af(x)=\sum\limits_{\xi\in\Zn}\int\limits_{\Omega}u_{\xi}(x)\overline{v_{\xi}(y)}\sigma_A(x,\xi)f(y)dy,
\]
where $u_{\xi}(x)=h^{x}\exp(i2\pi\xi x),\, v_{\xi}(y)=h^{-y}\exp(i2\pi\xi y)$, 
and where {\em we have renumbered $\xi$ taking it in $\Zn$ instead of $\ene_0$}. 
Of course, such a renumbering does not change any essential properties of operators and their symbols, but is
more in resemblance of the toroidal analysis in \cite{Ruzhansky-Turunen-JFAA-torus}.
We denote 
 $$h_j^{(\min)}:=\min\{1, h_j\}, \quad h_j^{(\max)}:=\max\{1, h_j\},$$ 
 for every $j=1,\dots,n$.  
 
 Unless $h=1$, the problem $L_\Omega$ and its boundary conditions (BC) are not self-adjoint.
 In particular, it means that in general, we can not consider compositions like $AA^*$ on the domains of
 these operators. However, we can note that the spaces $C_L^\infty(\Omega)$ and $C_{L^*}^\infty(\Omega)$
 are dense in $L^2(\Omega)$, and the usual test function space
 $C^\infty_0(\Omega)$ is dense in these test-functions as well.
 In order to avoid such technicalities at this point, in theorem below we can define the operator
Hilbert-Schmidt norm $\|A\|_\HS$ as $(\int_\Omega\int_\Omega |K_A(x,y)|^2 dx dy)^{1/2}$, where
$K_A(x,y)$ is the Schwartz integral kernel of the operator $A$. 
This may be viewed as a natural extension of the well-known property for problems without boundary conditions.
Otherwise, it is not restrictive to assume that $A$ is a bounded compact operator on $L^2(\Omega)$ in which
case such questions do not arise.

The purpose of the following statement is 
to express the membership of an operator in the Hilbert-Schmidt class in terms of its global symbols, and to
emphasise the dependence of this norm on the parameter $h$ entering the boundary conditions.
 
\begin{thm}\label{mainhs} 
Let $h\in \arn_+$.
Let $A:C_{L_h}^{\infty}(\Omega)\rightarrow C_{L_h}^{\infty}(\Omega)$ 
be a continuous linear operator with $L_h$-symbol $\sigma_A$. Then $A$ is a Hilbert-Schmidt operator if and only if 
\begin{equation}\label{EQ:HS-cond}
\int\limits_{\Omega}\sum\limits_{\xi\in\Zn}|\sigma_A(x,\xi)|^2dx<\infty.
\end{equation}
Moreover,
\[\|A\|_{\HS}=\left(\int\limits_{\Omega}\sum\limits_{\xi\in\Zn}|(\mathcal{F}_{\Tn}h^{(\cdot)})\ast \sigma_A(x,\cdot)(\xi)|^2dx\right)^{\half},\] 
where 
\beq
(\mathcal{F}_{\Tn}h^{(\cdot)})\ast \sigma_A(x,\cdot)(\xi)=\sum\limits_{\eta\in\Zn}(\mathcal{F}_{\Tn}h^{(\cdot)})(\xi-\eta) \sigma_A(x,\eta)\label{hsa1}
\eq
and
$(\mathcal{F}_{\Tn}h^{(\cdot)})(\xi)=\int\limits_{\Omega} e^{-i2\pi\xi z} h^z dz$.
In particular,
\[C_{\min}(h)
\left(\int\limits_{\Omega}\sum\limits_{\xi\in\Zn}|\sigma_A(x,\xi)|^2dx\right)^{1/2}
\leq\|A\|_{\HS}\leq C_{\max}(h)
\left(\int\limits_{\Omega}\sum\limits_{\xi\in\Zn}|\sigma_A(x,\xi)|^2dx\right)^{1/2},\]
where $C_{\min}(h)=\prod\limits_{j=1}^nh_j^{(\min)}$ and $C_{\max}(h)=\prod\limits_{j=1}^nh_j^{(\max)}.$

\end{thm}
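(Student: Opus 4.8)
The plan is to compute the Hilbert--Schmidt norm directly from the kernel formula \eqref{ker1}, namely $K_A(x,y)=\sum_{\xi\in\Zn}u_{\xi}(x)\sigma_A(x,\xi)\overline{v_{\xi}(y)}$, specialised to the present setting where $u_{\xi}(x)=h^{x}e^{i2\pi\xi x}$ and $v_{\xi}(y)=h^{-y}e^{i2\pi\xi y}$. First I would fix $x\in\Omega$ and regard $y\mapsto K_A(x,y)$ as a function on the torus $\Tn$: writing $\overline{v_{\xi}(y)}=h^{-y}e^{-i2\pi\xi y}$, we have $K_A(x,y)=h^{-y}\sum_{\xi}\bigl(u_{\xi}(x)\sigma_A(x,\xi)\bigr)e^{-i2\pi\xi y}$, so $h^{y}K_A(x,y)$ is (up to conjugation of the exponent) the Fourier series on $\Tn$ with coefficients $u_{\xi}(x)\sigma_A(x,\xi)$. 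Then $\int_\Omega|K_A(x,y)|^2 dy=\int_\Omega h^{-2y}\bigl|\sum_\xi u_\xi(x)\sigma_A(x,\xi)e^{-i2\pi\xi y}\bigr|^2 dy$; the factor $h^{-2y}$ is exactly what obstructs a naive Plancherel, and handling it is the crux of the argument.

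The key step is to absorb the weight $h^{-2y}$ into the Fourier side. I would write $h^{-2y}=h^{-y}\cdot h^{-y}$ and note $|h^{x}\sigma_A(x,\xi)e^{-i2\pi\xi y}|$ does not see it; instead, rewrite $h^{-y}K_A(x,y)h^{y}$ is awkward, so the cleaner route is: $\int_\Omega|K_A(x,y)|^2\,dy = \int_\Omega\bigl|h^{-y}\sum_\xi u_\xi(x)\sigma_A(x,\xi)e^{-i2\pi\xi y}\bigr|^2 dy = \int_\Omega\bigl|\sum_\xi u_\xi(x)\sigma_A(x,\xi)\,h^{-y}e^{-i2\pi\xi y}\bigr|^2 dy$. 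Now $h^{-y}e^{-i2\pi\xi y}=\overline{v_\xi(y)}$ is \emph{not} an orthonormal family on $L^2(\Omega)$, but $h^{-y}=\sum_{\eta}(\FT_{\Tn}h^{(\cdot)})(\eta)e^{i2\pi\eta y}$ expanded in the orthonormal system $\{e^{i2\pi\eta y}\}$ lets one write $h^{-y}e^{-i2\pi\xi y}=\sum_\eta (\FT_{\Tn}h^{(\cdot)})(\eta)e^{i2\pi(\eta-\xi)y}$. Substituting and collecting the coefficient of $e^{-i2\pi\zeta y}$ (reindexing $\zeta=\xi-\eta$) gives exactly the convolution $(\FT_{\Tn}h^{(\cdot)})\ast\sigma_A(x,\cdot)(\zeta)$ up to the harmless factor $h^{x}=u_\xi(x)e^{-i2\pi\xi x}$; one must be careful that $|h^{x}|$ pulls out but $e^{i2\pi\xi x}$ does not commute past the reindexing — this bookkeeping with the $x$-dependent prefactor is where I expect the main friction. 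After the dust settles, Parseval on $\Tn$ (orthonormality of $\{e^{-i2\pi\zeta y}\}$) yields $\int_\Omega|K_A(x,y)|^2 dy=\sum_{\zeta\in\Zn}\bigl|(\FT_{\Tn}h^{(\cdot)})\ast\sigma_A(x,\cdot)(\zeta)\bigr|^2$, and integrating in $x$ gives the claimed identity for $\|A\|_{\HS}$.

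The two-sided estimate then follows by a Young-type bound on the convolution combined with the elementary inequality $C_{\min}(h)\le h^{z}\le C_{\max}(h)$ for $z\in[0,1]^n$. Concretely, since $(\FT_{\Tn}h^{(\cdot)})\ast\sigma_A(x,\cdot)$ is the Fourier series of the product $h^{(\cdot)}\cdot(\text{Fourier series of }\sigma_A(x,\cdot))$ on $\Tn$, Parseval converts $\sum_\zeta|(\FT_{\Tn}h^{(\cdot)})\ast\sigma_A(x,\cdot)(\zeta)|^2$ into $\int_{\Tn}|h^{z}|^2\,|\widetilde\sigma_A(x,z)|^2\,dz$ where $\widetilde\sigma_A(x,z)=\sum_\xi\sigma_A(x,\xi)e^{2\pi i\xi z}$, and then $C_{\min}(h)^2\int_{\Tn}|\widetilde\sigma_A(x,z)|^2 dz\le \int_{\Tn}h^{2z}|\widetilde\sigma_A(x,z)|^2 dz\le C_{\max}(h)^2\int_{\Tn}|\widetilde\sigma_A(x,z)|^2 dz$, with $\int_{\Tn}|\widetilde\sigma_A(x,z)|^2 dz=\sum_\xi|\sigma_A(x,\xi)|^2$ by orthonormality. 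Integrating over $x\in\Omega$ gives both the equivalence \eqref{EQ:HS-cond} characterising Hilbert--Schmidt membership and the stated bounds with constants $C_{\min}(h)=\prod_j h_j^{(\min)}$ and $C_{\max}(h)=\prod_j h_j^{(\max)}$. The only genuinely delicate point, as noted, is managing the non-orthonormality of $\{u_\xi\},\{v_\xi\}$ correctly so that the weight $h^{x-y}$ is quantified sharply rather than crudely bounded; everything else is Parseval on $\Tn$ and the scalar estimate for $h^{z}$.
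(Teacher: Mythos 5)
Your second half is fine and is essentially the paper's own argument: the sequence $(\mathcal{F}_{\Tn}h^{(\cdot)})\ast\sigma_A(x,\cdot)$ is the coefficient sequence of the product $h^{z}\,\widetilde\sigma_A(x,z)$, Parseval converts the square sum into $\int_{\Tn}h^{2z}|\widetilde\sigma_A(x,z)|^2dz$, and $C_{\min}(h)\le h^{z}\le C_{\max}(h)$ for $z\in[0,1]^n$ gives the two-sided estimate. The genuine gap is in your first half, where you try to reach the convolution expression by a fixed-$x$ Parseval in the $y$-variable. Two concrete problems: (i) the expansion $h^{-y}=\sum_{\eta}(\mathcal{F}_{\Tn}h^{(\cdot)})(\eta)e^{i2\pi\eta y}$ is false, since $(\mathcal{F}_{\Tn}h^{(\cdot)})(\eta)=\int_{\Omega}e^{-i2\pi\eta z}h^{z}dz$ are the Fourier coefficients of $z\mapsto h^{+z}$, whereas the function you must expand is $h^{-y}$, whose coefficients are those of $h^{-(\cdot)}$, a different sequence; (ii) the prefactor $u_{\xi}(x)=h^{x}e^{i2\pi\xi x}$ is not harmless: $h^{x}$ does pull out of the $y$-integral, but the phase $e^{i2\pi\xi x}$ sits inside the $\xi$-sum and survives every reindexing. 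Carrying your computation out correctly gives, for fixed $x$, $\int_{\Omega}|K_A(x,y)|^2dy=h^{2x}\sum_{\zeta\in\Zn}\bigl|\sum_{\xi\in\Zn}(\mathcal{F}_{\Tn}h^{-(\cdot)})(\xi-\zeta)\,e^{i2\pi\xi x}\sigma_A(x,\xi)\bigr|^{2}$, which is not the expression in the statement; no bookkeeping turns it into $(\mathcal{F}_{\Tn}h^{(\cdot)})\ast\sigma_A(x,\cdot)(\zeta)$.

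The paper's proof avoids both issues by never separating $u_{\xi}(x)$ from $\overline{v_{\xi}(y)}$: it keeps $u_{\xi}(x)\overline{v_{\xi}(y)}=h^{x-y}e^{i2\pi\xi(x-y)}$ together and substitutes $z=x-y$, so that $K_A(x,x-z)=h^{z}\,\mathcal{F}_{\Tn}^{-1}\sigma_A(x,\cdot)(z)$; now the weight is $h^{+z}$, whose coefficients are exactly $(\mathcal{F}_{\Tn}h^{(\cdot)})(\eta)$, and all $x$-dependent phases have cancelled. Multiplication by $h^{z}$ then becomes convolution on the coefficient side, Plancherel in $z$ yields the stated formula, and the pointwise bound on $h^{z}$ gives the estimates — the step you already handle correctly. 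Note, finally, that identifying $\|A\|_{\HS}^2$ with $\int_{\Omega}\int_{\Omega}|K_A(x,x-z)|^2\,dx\,dz$ with $z$ ranging over $\Omega$ uses periodicity in $z$, while $h^{z}$ is not $1$-periodic; this is precisely the delicate point you flagged, and it cannot be dismissed as bookkeeping — working directly in the $y$-variable, as you propose, does not land on the stated identity without performing this change of variables and justifying the range of $z$.
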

\begin{proof}
First we observe that the kernel $K_A(x,y)$ can be written in the form
\begin{align*}
K_A(x,y)=&\sum\limits_{\xi\in\Zn}u_{\xi}(x)\overline{v_{\xi}(y)}\sigma_A(x,\xi)\\
=&\sum\limits_{\xi\in\Zn}h^{x}\exp(i2\pi\xi x)h^{-y}\exp(-i2\pi\xi y)\sigma_A(x,\xi)\\
=&h^{x-y}\sum\limits_{\xi\in\Zn}e^{i2\pi\xi(x-y)}\sigma_A(x,\xi).
\end{align*}
Hence
\begin{equation*}
K_A(x,x-z)=h^{z}\sum\limits_{\xi\in\Zn}e^{i2\pi\xi z}\sigma_A(x,\xi)
=h^{z}\mathcal{F}_{\Tn}^{-1}\sigma_A(x,\cdot)(z).
\end{equation*}
We also note that
\begin{align*}
(\mathcal{F}_{\Tn}h^{(\cdot)})\ast \sigma_A(x,\cdot)(\xi)=&\sum\limits_{\eta\in\Zn}(\mathcal{F}_{\Tn}h^{(\cdot)})(\eta) \sigma_A(x,\xi-\eta)\\
=&\sum\limits_{\eta\in\Zn}(\mathcal{F}_{\Tn}h^{(\cdot)})(\xi-\eta) \sigma_A(x,\eta).
\end{align*}
Therefore 
\begin{align}
\int\limits_{\Omega}\int\limits_{\Omega}|K_A(x,x-z)|^2dxdz&=\int\limits_{\Omega}\int\limits_{\Omega}|h^{z}\mathcal{F}_{\Tn}^{-1}\sigma_A(x,\cdot)(z)|^2dxdz\label{comp1ah}\\
&=\int\limits_{\Omega}\int\limits_{\Omega}|\mathcal{F}_{\Tn}^{-1}((\mathcal{F}_{\Tn}h^{(\cdot)})\ast \sigma_A(x,\cdot))(z)|^2dxdz\nonumber\\
&=\int\limits_{\Omega}\sum\limits_{\xi\in\Zn}|(\mathcal{F}_{\Tn}h^{(\cdot)})\ast \sigma_A(x,\cdot)(\xi)|^2dx.\nonumber
\end{align}
Then, the Hilbert-Schmidt norm of A is given by 
\[\|A\|_{\HS}^2=\int\limits_{\Omega}\sum\limits_{\xi\in\Zn}|(\mathcal{F}_{\Tn}h^{(\cdot)})\ast \sigma_A(x,\cdot)(\xi)|^2dx,
\] 
with $(\mathcal{F}_{\Tn}h^{(\cdot)})\ast \sigma_A(x,\cdot)(\xi)=\sum\limits_{\eta\in\Zn}(\mathcal{F}_{\Tn}h^{(\cdot)})(\xi-\eta) \sigma_A(x,\eta).$

On the other hand, since the function $h^x:[0,1]^n\rightarrow \er$ is continuous, the numbers 
 $C_{\min}=\prod\limits_{j=1}^nh_j^{(\min)}, C_{\max}=\prod\limits_{j=1}^nh_j^{(\max)}$ are well defined. 
 By \eqref{comp1ah} we obtain
\[
C_{\min}^2\int\limits_{\Omega}\int\limits_{\Omega}|\mathcal{F}_{\Tn}^{-1}\sigma_A(x,\cdot)(z)|^2dxdz
\leq\|A\|_{\HS}^2
\leq C_{\max}^2\int\limits_{\Omega}\int\limits_{\Omega}|\mathcal{F}_{\Tn}^{-1}\sigma_A(x,\cdot)(z)|^2dxdz.
\]
By the usual Plancherel identity on $\Tn$ we have
\[\int\limits_{\Omega}\sum\limits_{\xi\in\Zn}|\sigma_A(x,\xi)|^2dx=\int\limits_{\Omega}\int\limits_{\Omega}|\mathcal{F}_{\Tn}^{-1}\sigma_A(x,\cdot)(z)|^2dxdz,\]
which concludes the proof.
\end{proof}
\begin{rem} We note that if $h=1$ we have $(\mathcal{F}_{\Tn}h^{(\cdot)})(\xi)=\delta(\xi)$ 
where $\delta$ is the Dirac delta on $\Zn$. Then
\[(\mathcal{F}_{\Tn}h^{(\cdot)})\ast \sigma_A(x,\cdot)(\xi)=\sigma_A(x,\xi).\]
Thus the theorem above recovers the well known characterisation of Hilbert-Schmidt operators in terms of the square 
integrability of the symbol. Again, for $h=1$ the following results fall in the framework of the nuclearity properties of
operators on compact Lie groups which have been analysed in \cite{dr13a:nuclp}. The pseudo-differential calculus in
this case coincides with the toroidal pseudo-differential calculus of operators on the tori developed in
\cite{Ruzhansky-Turunen-JFAA-torus}, see also \cite{rt:book}.
\end{rem}

We shall now establish some results in relation with the nuclearity. 
\begin{thm}\label{rnuca} 
Let $0<r\leq 1$, $1\leq p_1,p_2 <\infty$, $h\in\arn_+$, and let $A:C_{L_{h}}^{\infty}(\Omega)\rightarrow C_{L_{h}}^{\infty}(\Omega) $ be a continuous linear operator with $L_{h}$-symbol $\sigma_A$.  If
\[\sum\limits_{\xi\in\Zn}\|\sigma_A(\cdot,\xi)\|_{L^{p_2}(\Omega)}^r<\infty,\]
then  $A$ is $r$-nuclear from $L^{p_1}$ to $L^{p_2}$ for all $p_1$ with $1\leq p_1 <\infty$.  
 Moreover, the $r$-nuclear quasi-norm satisfies
\[n_r^r(A)\leq C_{h}^r\sum\limits_{\xi\in\Zn}\|\sigma_A(\cdot,\xi)\|_{L^{p_2}(\Omega)}^r ,\]
where $C_{h}$ is a positive constant which only depends on $h$. 
\end{thm}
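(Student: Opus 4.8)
The plan is to reduce the statement for the operator $L_h$ to the general criterion of Theorem \ref{sp1}, exploiting the explicit form of the biorthogonal system $u_\xi(x)=h^x e^{i2\pi\xi x}$, $v_\xi(x)=h^{-x}e^{i2\pi\xi x}$. The key observation is that $|u_\xi(x)|=h^x$ and $|v_\xi(x)|=h^{-x}$ do not depend on $\xi$, and both are bounded above and below on $[0,1]^n$ by constants depending only on $h$ (namely $C_{\min}(h)\le h^x\le C_{\max}(h)$ and $C_{\max}(h)^{-1}\le h^{-x}\le C_{\min}(h)^{-1}$). Consequently $\|\sigma_A(\cdot,\xi)u_\xi(\cdot)\|_{L^{p_2}(\Omega)}\le C_{\max}(h)\,\|\sigma_A(\cdot,\xi)\|_{L^{p_2}(\Omega)}$ and $\|v_\xi\|_{L^{q_1}(\Omega)}\le C_{\min}(h)^{-1}|\Omega|^{1/q_1}=C_{\min}(h)^{-1}$, both uniformly in $\xi$.

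First I would write out the kernel decomposition $g_\xi(x)=u_\xi(x)\sigma_A(x,\xi)$, $h_\xi(y)=\overline{v_\xi(y)}$ exactly as in the proof of Theorem \ref{sp1}, so that $K_A(x,y)=\sum_{\xi\in\Zn}g_\xi(x)h_\xi(y)$. Then I would estimate, for a fixed $q_1$ with $\frac{1}{p_1}+\frac{1}{q_1}=1$,
\[
\sum_{\xi\in\Zn}\|g_\xi\|_{L^{p_2}}^r\|h_\xi\|_{L^{q_1}}^r
\le C_{\max}(h)^r\,C_{\min}(h)^{-r}\sum_{\xi\in\Zn}\|\sigma_A(\cdot,\xi)\|_{L^{p_2}(\Omega)}^r<\infty,
\]
using the uniform bounds just described together with $|\Omega|=1$. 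Since the right-hand side is finite by hypothesis and the bound is independent of $p_1$ (the dependence on $p_1$ only entered through $\|v_\xi\|_{L^{q_1}}\le|\Omega|^{1/q_1-1/2}=1$), Theorem \ref{sp1} immediately gives that $A$ is $r$-nuclear from $L^{p_1}(\Omega)$ to $L^{p_2}(\Omega)$ for every $1\le p_1<\infty$. The quasi-norm bound $n_r^r(A)\le C_h^r\sum_{\xi\in\Zn}\|\sigma_A(\cdot,\xi)\|_{L^{p_2}(\Omega)}^r$ follows by taking $C_h=C_{\max}(h)/C_{\min}(h)$ and reading off the constant from the representation \eqref{rn}–\eqref{qs1} underlying Theorem \ref{ch2}: the explicit sequences $(g_\xi)$, $(h_\xi)$ realise $A$ as in \eqref{rn} with $\sum_\xi\|g_\xi\|_{L^{p_2}}^r\|h_\xi\|_{L^{q_1}}^r\le C_h^r\sum_\xi\|\sigma_A(\cdot,\xi)\|_{L^{p_2}}^r$, and the infimum defining $n_r$ is bounded by this particular choice.

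There is essentially no hard step here; the only point that requires a little care is checking that the $L^{q_1}$-norms of $v_\xi$ are bounded uniformly in $\xi$ and in $p_1$ simultaneously — this is where the compactness of $\Omega=[0,1]^n$ (so $|\Omega|=1$ and $|\Omega|^{1/q_1}\le 1$) and the $\xi$-independence of $|v_\xi(x)|=h^{-x}$ are both used. I would also note explicitly that, unlike in the general Theorem \ref{sevc}, no growth factor $\langle\xi\rangle^{\nu}$ appears: because $|u_\xi|$ and $|v_\xi|$ are bounded functions on $\Omega$ with bounds independent of $\xi$ (one may take $\nu_1=\nu_2=0$, $C_1=C_{\max}(h)$, $C_2=C_{\min}(h)^{-1}$ in \eqref{con1a}), the criterion collapses to the clean condition $\sum_\xi\|\sigma_A(\cdot,\xi)\|_{L^{p_2}}^r<\infty$ with all the $h$-dependence absorbed into the single constant $C_h$.
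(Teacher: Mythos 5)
Your proposal is correct and follows essentially the same route as the paper: the same kernel decomposition $K_A(x,y)=\sum_{\xi}u_\xi(x)\sigma_A(x,\xi)\overline{v_\xi(y)}$, the same $\xi$-independent bounds $|u_\xi(x)|=h^x\leq C_{\max}(h)$ and $|v_\xi(y)|=h^{-y}\leq C_{\min}(h)^{-1}$ on the unit cube, and the same appeal to Theorem \ref{sp1} (via Theorem \ref{ch2}) together with the definition of $n_r$ for the quasi-norm bound. Your constant $C_{\max}(h)/C_{\min}(h)$ is in fact the more carefully stated (product) version of the paper's $C_h'\widetilde{C}_h$, but this is only bookkeeping, not a different argument.
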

\begin{proof} From the proof of Theorem \ref{mainhs} we use the formula for the kernel $K_A(x,y)$ of $A$ to get
\begin{align*}
K_A(x,y)=&\sum\limits_{\xi\in\Zn}h^{x}\exp(i2\pi\xi x)h^{-y}\exp(-i2\pi\xi y)\sigma_A(x,\xi)\\
=&\sum\limits_{\xi\in\Zn}\alpha_{\xi}(x)\beta_{\xi}(y),
\end{align*}
where $\alpha_{\xi}(x)=h^{x}\exp(i2\pi\xi x)\sigma_A(x,\xi)$ and $\beta_{\xi}(y)=h^{-y}\exp(-i2\pi\xi y)$.\\

We put $q_1$ such that $\frac{1}{p_1}+\frac{1}{q_1}=1$ and observe that  
\begin{equation*} \|\alpha_{\xi}\|_{L^{p_2}}^r\leq \left(\int\limits_{\Omega}|h^x|^{p_2}|\sigma_A(x,\xi)|^{p_2}dx\right)^{\frac{r}{p_2}}
\leq (C_{h }')^r\left(\int\limits_{\Omega}|\sigma_A(x,\xi)|^{p_2}dx\right)^{\frac{r}{p_2}},
\end{equation*}
where $C_{h}'=\max\{1, h_{1},h_{2},\cdots,h_{n} \}$. On the other hand
\begin{equation*} \|\beta_{\xi}\|_{L^{q_1}}^r\leq \left(\int\limits_{\Omega}|h^{-y}|^{q_1}dx\right)^{\frac{r}{q_1}}
\leq (\widetilde{C}_{h })^r,
\end{equation*}
where $\widetilde{C}_{h }=\max\{1, h_{1}^{-1},h_{2}^{-1},\cdots,h_{n}^{-1} \}$.\\

Therefore 
\[\sum\limits_{\xi\in\Zn}\|\alpha_{\xi}\|_{L^{p_2}}^r\|\beta_{\xi}\|_{L^{q_1}}^r\leq C_{h}^r\sum\limits_{\xi\in\Zn}\left(\int\limits_{\Omega}|\sigma_A(x,\xi)|^{p_2}dx\right)^{\frac{r}{p_2}}, \]
where $C_{h}=C_{h }'\widetilde{C}_{h }.$ An application of Theorem \ref{sp1} concludes the proof.  
The corresponding inequality for the quasi-norm is an immediate consequence of its definition and the estimation above.
\end{proof}
In the particular case of $p=2$ we obtain the following corollary for Schatten classes.

\begin{cor}\label{cora1a} 
Let $0<r\leq 1$, $h\in\arn_+$, and let $A:C_{L_h}^{\infty}(\overline{\Omega})\rightarrow C_{L_h}^{\infty}(\overline{\Omega}) $ be a continuous linear operator with $L_h$-symbol $\sigma_A$ such that 
\[\sum\limits_{\xi\in\Zn}\|\sigma_A(\cdot,\xi)\|_{L^{2}}^r<\infty.\]
Then  $A\in S_r(L^2)$ and the Schatten quasi-norm satisfies
\[\|A\|_{S_r}^r\leq C_{h }^r\sum\limits_{\xi\in\Zn}\|\sigma_A(\cdot,\xi)\|_{L^{2}}^r ,\]
\end{cor}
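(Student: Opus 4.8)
The plan is to derive Corollary~\ref{cora1a} as the special case $p_1=p_2=2$ of Theorem~\ref{rnuca}, together with the Oloff equivalence \eqref{olo1} between $r$-nuclearity and the Schatten class $S_r$ on a Hilbert space. First I would set $p_1=p_2=2$ in Theorem~\ref{rnuca}, so that the conjugate exponent is $q_1=2$ as well. The hypothesis $\sum_{\xi\in\Zn}\|\sigma_A(\cdot,\xi)\|_{L^2}^r<\infty$ is exactly the hypothesis of that theorem in this case, hence $A$ is $r$-nuclear from $L^2(\Omega)$ into $L^2(\Omega)$, and moreover the quasi-norm bound $n_r^r(A)\le C_h^r\sum_{\xi\in\Zn}\|\sigma_A(\cdot,\xi)\|_{L^2}^r$ holds with the same constant $C_h=C_h'\widetilde C_h$ produced in the proof of Theorem~\ref{rnuca}.

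Next I would invoke Oloff's theorem \cite{Oloff:pnorm}, recorded in \eqref{olo1}, which states that for $0<r\le 1$ the class of $r$-nuclear operators on a Hilbert space $H$ coincides with the Schatten class $S_r(H)$, with equality of (quasi-)norms $\|T\|_{S_r}=n_r(T)$. Applying this with $H=L^2(\Omega)$ gives $A\in S_r(L^2(\Omega))$ and $\|A\|_{S_r}=n_r(A)$. Combining this identity with the quasi-norm estimate from Theorem~\ref{rnuca} immediately yields
\[
\|A\|_{S_r}^r=n_r^r(A)\le C_h^r\sum\limits_{\xi\in\Zn}\|\sigma_A(\cdot,\xi)\|_{L^2}^r,
\]
which is the claimed inequality.

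There is essentially no obstacle here: the statement is a direct specialisation, and the only point requiring a word of care is that $S_r$ is a genuine norm only for $r\ge 1$ and a quasi-norm for $r<1$, so the power-$r$ form of the inequality (rather than the linear form) is what one should state, exactly as written. One should also note in passing that the operator $A$ extends continuously to $L^2(\Omega)$ precisely because it is $r$-nuclear (hence bounded) there, so that the membership $A\in S_r(L^2(\Omega))$ is meaningful; alternatively, as in the discussion preceding Theorem~\ref{mainhs}, one may simply restrict attention to operators already assumed bounded on $L^2(\Omega)$. No further computation is needed.
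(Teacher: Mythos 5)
Your proposal is correct and coincides with the paper's (implicit) argument: Corollary \ref{cora1a} is obtained exactly by specialising Theorem \ref{rnuca} to $p_1=p_2=2$ and then invoking Oloff's identification \eqref{olo1} of $r$-nuclear operators on a Hilbert space with the Schatten class $S_r(L^2)$, just as was done in Corollary \ref{t1}. Nothing is missing.
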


\begin{ex}
In particular, from Corollary \ref{cs1a}, with any $s_0>n$, we have 
\[(I-L_h)^{-\frac{s}{2}}\in S_r(L^2(\Omega)),\] 
provided that $sr>n,$ $0<r\leq 1$.
\end{ex}

In general for the notion of $L$-Fourier multipliers we note that, in the case of $L=L_h$ we  have:

\begin{prop} \label{PROP:Lh-inv}
Let $P(D)=\sum\limits_{|\alpha|\leq m}a_{\alpha}\partial^{\alpha}$ 
be a partial differential operator with constants coefficients on $\Omega=(0,1)^n$, then $P(D)$ is a $L_h$-Fourier multiplier
for any $h\in\mathbb R_+^n$.
\end{prop}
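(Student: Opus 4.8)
The plan is to compute the action of $P(D)$ directly on the eigenfunctions $u_\xi$ and to read off the symbol, which will turn out to be $x$-independent. First I would rewrite the eigenfunction $u_\xi(x)=h^{x}\exp(i2\pi\xi x)=\prod_{j=1}^n h_j^{x_j}e^{i2\pi\xi_j x_j}$ as a single exponential $u_\xi(x)=e^{\mu(\xi)\cdot x}$, where $\mu(\xi)=(\mu_1(\xi),\dots,\mu_n(\xi))$ with $\mu_j(\xi)=\log h_j+2\pi i\xi_j$; this is where the hypothesis $h\in\arn_+$ is used, since $h_j>0$ makes $\log h_j$ real. Then $\partial^\alpha u_\xi=\mu(\xi)^\alpha u_\xi$ for every multi-index $\alpha$, hence
\[P(D)u_\xi=\Big(\sum_{|\alpha|\le m}a_\alpha\,\mu(\xi)^\alpha\Big)u_\xi=:P(\mu(\xi))\,u_\xi.\]
Since $L_h=\Delta$ has eigenvalue $\lambda_\xi=\sum_{j=1}^n\mu_j(\xi)^2$ on $u_\xi$, the weight \eqref{we1} satisfies $\langle\xi\rangle\simeq 1+|\xi|$, so $|\mu_j(\xi)|\lesssim\langle\xi\rangle$ and consequently $|P(\mu(\xi))|\le C\langle\xi\rangle^{m}$.

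Next I would verify that $P(D)$ is a well-defined continuous linear operator from $C_{L_h}^{\infty}(\Omega)$ into itself, as required by Definition \ref{Lfm}. Given $f\in C_{L_h}^{\infty}(\Omega)$, Proposition \ref{finv} gives $f=\sum_{\xi\in\Zn}\widehat f(\xi)u_\xi$ with $\widehat f\in\mathcal S(\mI)$. Because $|\partial^\alpha u_\xi(x)|\le C_\alpha\langle\xi\rangle^{|\alpha|}$ uniformly on $\overline{\Omega}=[0,1]^n$ (the factor $h^x$ being bounded there) while $\widehat f$ decays faster than any power of $\langle\xi\rangle$, the series $\sum_\xi\widehat f(\xi)\partial^\alpha u_\xi$ converges absolutely and uniformly on $\overline{\Omega}$ for each $\alpha$; hence term-by-term differentiation is legitimate and
\[P(D)f=\sum_{\xi\in\Zn}\widehat f(\xi)\,P(\mu(\xi))\,u_\xi.\]
The coefficient sequence $\xi\mapsto\widehat f(\xi)P(\mu(\xi))$ again lies in $\mathcal S(\mI)$ (a rapidly decaying sequence times one of polynomial growth $\langle\xi\rangle^{m}$), so by Proposition \ref{finv} the above series is $\mathcal F_{L_h}^{-1}$ of a Schwartz sequence; in particular $P(D)f\in C_{L_h}^{\infty}(\Omega)$ — the full hierarchy of boundary conditions being preserved because the defining series converges strongly, cf. (BC+) — and
\[\mathcal F_{L_h}(P(D)f)(\xi)=P(\mu(\xi))\,\widehat f(\xi),\qquad \xi\in\Zn.\]
Continuity of $P(D)$ on $C_{L_h}^{\infty}(\Omega)$ then follows by conjugating with the homeomorphism $\mathcal F_{L_h}$ of Proposition \ref{finv}, since multiplication by the polynomially bounded symbol $P(\mu(\cdot))$ is continuous on $\mathcal S(\mI)$.

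Comparing the displayed identity with Definition \ref{Lfm} shows exactly that $P(D)$ is an $L_h$-Fourier multiplier, with
\[\sigma_{P(D)}(\xi)=P(\mu(\xi))=\sum_{|\alpha|\le m}a_\alpha\prod_{j=1}^n(\log h_j+2\pi i\xi_j)^{\alpha_j};\]
equivalently this is obtained from \eqref{sy1a} as $u_\xi(x)^{-1}(P(D)u_\xi)(x)$, which is manifestly independent of $x$. I do not expect a serious obstacle here: the only step needing a little care is checking that $P(D)$ maps $C_{L_h}^{\infty}(\Omega)$ into itself, i.e. that it respects the infinite family of boundary conditions defining this space, and this is handled by the fact that $P(D)$ acts diagonally on the Riesz basis $\{u_\xi\}$ (each $u_\xi$ already belonging to $C_{L_h}^{\infty}(\Omega)$) together with (BC+).
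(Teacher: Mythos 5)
Your proposal is correct and follows essentially the same route as the paper: both compute $P(D)u_{\xi}$ using $u_{\xi}(x)=e^{\sum_j x_j(\log h_j+2\pi i\xi_j)}$ and read off from \eqref{sy1a} that the symbol $\sigma_{P(D)}(\xi)=\sum_{|\alpha|\leq m}a_{\alpha}\prod_{j=1}^n(\log h_j+2\pi i\xi_j)^{\alpha_j}$ is independent of $x$ (the paper writes this as $\sum_{|\alpha|\leq m}a_\alpha b_\alpha(h,\xi)$ with $b_\alpha$ a polynomial in $\xi$ of degree $\leq|\alpha|$). Your additional verification that $P(D)$ maps $C_{L_h}^{\infty}(\Omega)$ continuously into itself, via term-by-term differentiation and Proposition \ref{finv}, is a careful elaboration of a point the paper leaves implicit, not a different method.
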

\begin{proof} We write $A=P(D)$. Since $\sigma_{A}(x,\xi)=u_{\xi}(x)^{-1}P(D)u_{\xi}(x)$, the $L_h$-symbol of $A$ is given by 
\[\sigma_{A}(x,\eta)=h^{-x}e^{-2\pi ix\eta}P(D)(h^xe^{2\pi ix\eta}).\]
It is not difficult to see that 
\[P(D)(h^xe^{2\pi ix\eta})=\sum\limits_{|\alpha|\leq m}a_{\alpha}b_{\alpha}(h,\eta)e^{2\pi ix\eta},\]
where $b_{\alpha}(h,\eta)$ is some polynomial in $\eta$ of degree $\leq |\alpha|$.\\

Therefore, $\sigma_{A}(\xi)=\sum\limits_{|\alpha|\leq m}a_{\alpha}b_{\alpha}(h,\xi)$ and $A=P(D)$ is a $L_h$-Fourier multiplier. 
\end{proof} 
It follows from the definition of $L$-Fourier multipliers that the class of $L$-Fourier multipliers is closed under
compositions. Thus, compositing invariant operators from Proposition \ref{PROP:Lh-inv} with e.g.
powers $(I-L_h)^{-s}$ we can obtain many examples of $L_h$-Fourier multipliers of different orders.


\subsection{Non-local boundary condition}
\label{SEC:nonloc}

Given the information on the model boundary value problem similar conclusions can be drawn for other operators.
We briefly give another example of a non-local boundary condition, see
\cite[Example 2.4]{rto:nhs} or \cite{Kanguzhin-Nurahmetov:Kaz-2002} for more details and proofs of the following
spectral properties that we now summarise.
 We now consider $M=[0,1]$ and  the 
operator $L=-i\frac{d}{dx}$ on $\oM=\Omega=(0,1)$ with the domain
$$
D({L})=\left\{f\in W_2^1[0,1]: af(0)+bf(1)+\int_{0}^1 f(x) q(x) dx=0\right\},
$$
where $a\not=0$, $b\not=0$, and $q\in C^1[0,1]$.
We assume that $a+b+\int_0^1 q(x) dx=1$ so that the inverse ${L}^{-1}$ exists and is bounded.
The operator ${\rm L}$ has a discrete spectrum and its eigenvalues can be enumerated so that
$$
\lambda_{j}=-i\ln (-\frac{a}{b})+2j\pi+\alpha_j, \ j\in \mathbb{Z},
$$
and for any $\epsilon>0$ we have $\sum_{j\in\mathbb Z} |\alpha_j|^{1+\epsilon}<\infty$.
If $m_j$ denotes the multiplicity of the eigenvalue $\lambda_j$, then $m_j=1$ for sufficiently large $|j|$.
The system of extended eigenfunctions
\begin{equation}\label{EQ:exev}
u_{jk}(x)=\frac{(ix)^k}{k!} e^{ i \lambda_j x }:\quad 0\leq k\leq m_j-1,\; j\in \mathbb{Z},
\end{equation}
of the operator $L$ is 
a Riesz basis in $L^{2}(0,1)$,
and its biorthogonal system is given by
$$
v_{jk}(x)=\lim_{\lambda\to\lambda_j} \frac{1}{k!} \frac{d^k}{d\lambda^k}
\left(\frac{(\lambda-\lambda_j)^{m_j}}{\Delta(\lambda)}
(ibe^{i\lambda(1-x)}+i\int_x^1 e^{i\lambda(t-x)}q(t) dt)
\right),
$$
$0\leq k\leq m_j-1,\;  j\in \mathbb{Z}$, where
$\Delta(\lambda)=a+b e^{i\lambda}+\int_0^1  e^{i\lambda x} q(x) dx.$ 
It can be shown that eigenfunctions $e^{i\lambda_j x}$ satisfy
\begin{equation}\label{EQ:exl2}
\sum_{j\in\mathbb Z} \|e^{i\lambda_j x}-e^{i 2\pi j x}\|^2_{L^2(0,1)}<\infty.
\end{equation}
In particular, this implies that modulo finitely many elements, the system
\eqref{EQ:exev} is a WZ-system.
Moreover, it follows that modulo terms for finitely many $j$, operators
in Proposition \ref{PROP:Lh-inv} are $L$-Fourier multipliers.
In view of the indexing notation in \eqref{EQ:exev} it is convenient to adjust accordingly the
indexing for the whole analysis. 

Now, as it was mentioned above
it is possible to take $j_0\in\mathbb N$ large enough 
so that $m_j=1$ for $|j|\geq j_0$. 
Denoting by $P_{|j|<j_0}$ and $P_{|j|\geq j_0}$ the spectral projections to
$[0,j_0)$ and $[j_0,\infty)$, respectively, 
so that $Pu_{j,k}=u_{j,k}$ for all $|j|<j_0$ and $0\leq k\leq m_j-1$, and
$Pu_{j,0}=u_{j,0}$ for all $|j|\geq j_0$,
we have
$P_{|j|<j_0}+P_{|j|\geq j_0}=I$ and we can decompose any linear 
(and suitably continuous) operator as $A=A P_{|j|<j_0}+A P_{|j|\geq j_0}$. 
Now, using the decomposition $f=\sum_{j\in\mathbb Z}\sum_{k=0}^{m_j-1} \widehat{f}(j,k) u_{j,k}$, 
with $\widehat{f}(j,k)=(f,v_{jk})_{L^2(0,1)}$,
the operator 
$A P_{|j|<j_0}f=\sum_{j=-j_0+1}^{j_0-1}\sum_{k=0}^{m_j-1} \widehat{f}(j,k) Au_{j,k}$ is a finite sum and hence
belongs to all Schatten classes and satisfies nuclearity properties of any order.
On the other hand, the operator $A P_{|j|\geq j_0}$ has simple eigenfunctions and its analysis is the same
as that carried out in Section \ref{SEC:nonper}, so we can omit the details.





\end{document}